\crefname{assumption}{Assumption}{Assumptions}
\newcommand{\R}{\mathbb{R}}
\newcommand\abs[1]{\ensuremath{\left.\vert#1\right\vert}}
\newcommand\restr[2]{\ensuremath{\left.#1\right\vert_{#2}}}
\newtheorem{theorem}{Theorem}[section]
\newtheorem{lemma}[theorem]{Lemma}
\newtheorem{corollary}[theorem]{Corollary}
\newtheorem{remark}[theorem]{Remark}
\newtheorem{assumption}[theorem]{Assumption}
\newtheorem{definition}[theorem]{Definition}
\theoremstyle{definition}
\newcommand{\grad}{\nabla}
\newcommand{\norm}[2]{\left\lVert #1\right\rVert_{#2}}
\begin{document}

\title{Transmission problems and domain decompositions for non-autonomous parabolic equations on evolving domains}
\titlemark{Transmission problems and domain decompositions for parabolic equations on evolving domains}
\author{Amal Alphonse \and Ana Djurdjevac \and Emil Engström \and Eskil Hansen}

\emsauthor{1}{
	\givenname{Amal}
	\surname{Alphonse}
	\mrid{}
	\orcid{}}{A.~Alphonse}
%%%% Repeat the same fields for each numbered author
\emsauthor{2}{
	\givenname{Ana}
	\surname{Djurdjevac}
	\mrid{}
	\orcid{}}{A.~Djurdjevac}
\emsauthor{3}{
	\givenname{Emil}
	\surname{Engström}
	\mrid{}
	\orcid{}}{E.~Engström}
\emsauthor{4}{
	\givenname{Eskil}
	\surname{Hansen}
	\mrid{}
	\orcid{}}{E.~Hansen}
%%%% Please provide detailed address info for each author
%%%% Use the same numbering as for \emsauthor above
%%%% Please look up the ROR ID of your institute here: https://ror.org
\Emsaffil{1}{
	\department{}
	\organisation{Weierstrass Institute}
	\rorid{00h1x4t21}
	\address{Mohrenstraße 39}
	\zip{10117}
	\city{Berlin}
	\country{Germany}
	\affemail{alphonse@wias-berlin.de}}
%%%% Repeat the same fields for each numbered author
%%%% If some author has multiple affiliations, repeat the fields for each affiliation
%%%% Number the affiliations using {}
\Emsaffil{2}{
	\department{Institut für Mathematik}
	\organisation{Freie Universität Berlin}
	\rorid{1}{046ak2485}
	\address{Arnimallee 6}
	\zip{14195}
	\city{Berlin}
	\country{Germany}
    \affemail{adjurdjevac@zedat.fu-berlin.de}
	\department{2}{} 
	\organisation{2}{}%
	\rorid{2}{}
	\address{2}{}%
	\zip{2}{}
	\city{2}{}
	\country{2}{} 
	\affemail{2}{}}
\Emsaffil{3}{
	\department{Centre for Mathematical Sciences}
	\organisation{Lund University}
	\rorid{012a77v79}
	\address{P.O.\ Box 118}
	\zip{221 00}
	\city{Lund}
	\country{Sweden}
	\affemail{emil.engstrom@math.lth.se}}
\Emsaffil{4}{
	\department{Centre for Mathematical Sciences}
	\organisation{Lund University}
	\rorid{012a77v79}
	\address{P.O.\ Box 118}
	\zip{221 00}
	\city{Lund}
	\country{Sweden}
	\affemail{eskil.hansen@math.lth.se}}

%------
% Add MSC 2020 codes according to https://zbmath.org/classification/.
% A unique primary MSC code (in curly brackets) is mandatory,
% while secondary MSC codes (in square brackets) are optional.
%------
\classification[35K20,65M55]{35R37}

%------
% Add a list of keywords.
%------
\keywords{evolving domains, moving domains, non-overlapping domain decompositions, transmission problems, parabolic equations, time-dependent Steklov--Poincar\'e operators, convergence, Robin--Robin method}

%------
% Insert your abstract.
%------
\begin{abstract}
Parabolic equations on evolving domains model a multitude of applications including various industrial processes such as the molding of heated materials. Such equations are numerically challenging as they require large-scale computations and the usage of parallel hardware. Domain decomposition is a common choice of numerical method for stationary domains, as it gives rise to parallel discretizations. In this study, we introduce a variational framework that extends the use of such methods to evolving domains. In particular, we prove that transmission problems on evolving domains are well posed and equivalent to the corresponding parabolic problems. This in turn implies that the standard non-overlapping domain decompositions, including the Robin--Robin method, become well defined approximations. Furthermore, we prove the convergence of the Robin--Robin method. The framework is based on a generalization of fractional Sobolev--Bochner spaces on evolving domains, time-dependent Steklov--Poincar\'e operators, and elements of the approximation theory for monotone maps. 
\end{abstract}

\maketitle
\section{Introduction}

Industrial applications involving molding typically result in parabolic PDEs with the non-standard feature of evolving or moving spatial domains. In order to illustrate this, consider the production of railway tracks. This process includes two crucial steps, as depicted in~\cref{Fig:rail}. First, the rail is shaped, which involves hot rolling a steel beam with a rectangular cross section into a rail with an H-shaped cross section. Second, the newly molded rail is solidified by spraying water on its surface. A basic model for the temperature $u$ of the rail is then given by a non-autonomous parabolic equation on an evolving domain. That is, for the evolving domain $\{\Omega(t)\}_{t \in [0,\infty)}$, the temperature $u$ satisfies
\begin{equation}\label{eq:strong}
\left\{
     \begin{aligned}
            \dot u(t) - \nabla\cdot\bigl(\alpha(t)\nabla u(t)\bigr) + \bigr(\grad\cdot\mathbf{w}(t) +\beta(t)\bigl)u(t)&=f(t)  & &\text{in }\Omega(t),\\
            u(t)&=\eta(t) & &\text{on }\partial\Omega(t),\\
            u(0)&=0 & &\text{in }\Omega(0).
        \end{aligned}
\right.
\end{equation}
Here, the time evolution is described by the material derivative 
\begin{equation*}
\dot u(t)=\partial_tu(t)+\mathbf{w}(t)\cdot\nabla u(t),
\end{equation*}
and the domain $\Omega(t)$ and its boundary $\partial\Omega(t)$ evolve according to the known velocity field~$\mathbf{w}$. The precise geometry and assumptions on the problem data will be specified in~\cref{sec:movingInterface,sec:weak}. For simplicity we will only consider $\eta\equiv 0$ in \eqref{eq:strong}, but non-zero time-dependent boundary conditions can be handled as done in~\cref{sec:Honehalf}.
\begin{figure}
\begin{center}
    \includegraphics[width=0.45\textwidth]{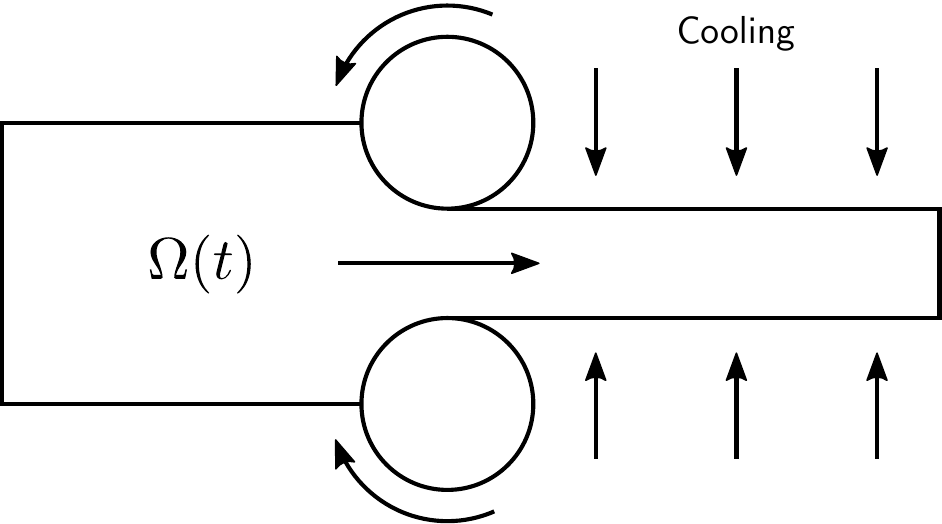}
    \hspace{20pt}\includegraphics[width=0.45\textwidth]{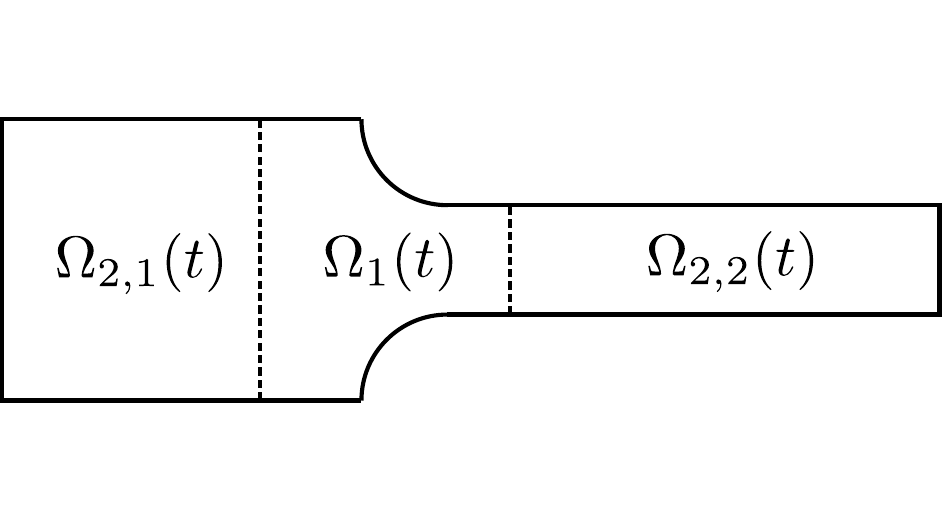}
    %\vspace{-20pt}
\end{center}
\caption{\emph{Left image: sketch of a steel beam $\Omega$ at time $t$ being reshaped via hot rolling and thereafter solidified by water cooling. 
Right image: Decomposition of the steel beam at time $t$, where $\Omega(t)$ is decomposed into $\Omega_1(t)$ and $\Omega_2(t)=\Omega_{2,1}(t)\cup \Omega_{2,2}(t)$.}}
\label{Fig:rail}
\end{figure} 

Other applications of parabolic equations on evolving domains and hypersurfaces, to name a few, include: the dynamics of bubbles rising in liquid columns~\cite{HYS09} governed by the Navier--Stokes equations; tumor growth models~\cite{BAR11} consisting of reaction-diffusion equations on surfaces evolving via forced mean curvature flows; spinodal decomposition of binary polymer mixtures~\cite{KRA95} governed by the Cahn--Hilliard equation. 

Parabolic equations on evolving domains are numerically challenging due to the time-dependent geometry and the need for implicit time integration. This all results in large-scale computations that require the usage of parallel and distributed hardware. In the context of stationary domains, the domain decomposition method is a common choice that gives rise to parallel discretizations. The basic idea for these methods is to decompose the domain associated to the equation into subdomains and thereafter communicate the results via the boundaries to the adjacent subdomains. As an example, when shaping the train rail one could, at a fixed time, decompose the rail into three subdomains, where the middle one is a small region around the deformation zone, see~\cref{Fig:rail}. The computational benefit of this is that each subdomain can be given a tailored spatial mesh. For example, the deformation subdomain typically requires a finer mesh than the other subdomains. For a general introduction to domain decomposition methods we refer to~\cite{quarteroni,widlund}.

From a mathematical perspective non-overlapping domain decomposition methods can be designed by first proving that the original parabolic equation~\cref{eq:strong} is equivalent to a so-called \textit{transmission problem}. For two disjoint evolving subdomains $\Omega_i(t)$, $i=1,2$, such that $\overline{\Omega}(t) = \overline{\Omega}_1(t) \cup\overline{\Omega}_2(t)$ and $\Gamma(t) = \partial\Omega_1(t) \cap \partial \Omega_2(t)$, the strong form of the transmission problem becomes
\begin{equation}\label{eq:trans}
    \left\{
	\begin{aligned}
		\dot u_i(t)-\nabla\cdot\bigl(\alpha(t)\nabla u_i(t)\bigr) + \bigr(\grad\cdot\mathbf{w}(t) +\beta(t)\bigl)u_i(t) &=f_i(t)  & &\text{in }\Omega_i(t),\\
		u_i(t)&=0 & &\text{on }\partial\Omega_i(t)\setminus\Gamma(t)\\
		& & & \quad\text{for }i=1,2,\\[5pt]
		u_1(t)&=u_2(t)  & &\text{on }\Gamma(t), \\
		\alpha(t)\nabla u_1(t)\cdot\nu_1(t)+\alpha(t)\nabla u_2(t)\cdot\nu_2(t) &= 0  & &\text{on }\Gamma(t), 
	\end{aligned}
	\right.
\end{equation}
where $\nu_i(t)$ is the unit outward normal vector of $\partial\Omega_i(t)$, $f_i(t)=\restr{f(t)}{\Omega_i(t)}$, and $u_i(t)=\restr{u(t)}{\Omega_i(t)}$. 

The non-overlapping domain decompositions can then be derived by approximating the transmission problem. For example, consider the classic Robin--Robin method, first introduced in~\cite{lions3}. By taking linear combinations of the last two equations in~\cref{eq:trans}, one has the equivalent Robin conditions
\begin{displaymath}
\alpha(t)\nabla u_1(t)\cdot\nu_i(t)+s_0u_1(t)=\alpha(t)\nabla u_2(t)\cdot\nu_i(t)+s_0u_2(t)\quad\text{on }\Gamma(t)\text{ for }i=1,2,
\end{displaymath}
and a method parameter $s_0>0$. Alternating between the subdomains then gives the Robin--Robin method as computing $(u^n_1,u^n_2)$ for $n=1,2,\ldots$ with 
\begin{equation}\label{eq:robin}
\left\{
\begin{aligned}
\dot u^n_1(t)-\nabla \cdot\bigl(\alpha(t)\nabla u^n_1(t)\bigr) +\bigr(\grad\cdot\mathbf{w}(t) +\beta(t)\bigl)u_1^n(t) &=f_1(t)  & &\text{in }\Omega_1(t),\\
u^n_1(t)&=0 & &\text{on }\partial\Omega_1(t)\setminus\Gamma(t),\\
\alpha(t)\nabla u^n_1(t)\cdot\nu_1(t) + s_0 u^n_1(t) &= & & &\\
\alpha(t)\nabla u^{n-1}_2(t)\cdot\nu_1(t) + &s_0 u^{n-1}_2(t)  & &\text{on }\Gamma(t),\\[5pt]
\dot u^n_2(t)-\nabla \cdot\bigl(\alpha(t)\nabla u^n_2(t)\bigr)+\bigr(\grad\cdot\mathbf{w}(t) +\beta(t)\bigl)u_2^n(t)&=f_2(t)  & &\text{in }\Omega_2(t),\\
u^n_2(t)&=0 & &\text{on }\partial\Omega_2(t)\setminus\Gamma(t),\\
\alpha(t)\nabla u^n_2(t)\cdot\nu_2(t) + s_0 u^n_2(t) &= & & &\\
\alpha(t)\nabla u^n_1(t)\cdot\nu_2(t) + &s_0 u^n_1(t) & &\text{on }\Gamma(t).
\end{aligned}
\right.
\end{equation}
Here, $u_2^0$ is an initial guess and $u_i^n(t)$ approximates $u_i(t)=\restr{u(t)}{\Omega_i(t)}$. Note that the Robin--Robin method is sequential, but the computation of each $u_i^n$ can be implemented in parallel when $\Omega_i(t)$ is a union of nonadjacent subdomains, as is the case in~\cref{Fig:rail}.

The well posedness of parabolic equations on evolving domains can be derived via the framework~\cite{amal}, which relies on a
variational formulation where the standard Sobolev--Bochner solution space 
\begin{equation}\label{eq:solspace}
H^1\bigl((0,T);H^{-1}(\Omega)\bigr)\cap L^2\bigl((0,T);H_0^1(\Omega)\bigr)
\end{equation}
is generalized to evolving domains $\Omega(t)$. The framework has also been extended to a Banach space setting~\cite{amal2}. This variational setting constitutes the starting point of the design and analysis of a wide range of finite element methods for equations on evolving domains. The development of continuous-in-time evolving finite element methods have been surveyed in~\cite{ELL21}. The extension to full space-time discretizations via Runge--Kutta and multistep time integrators have, e.g., been analyzed in~\cite{DZI12,KOV18,LUB13}. This type of analysis of full space-time methods has also been extended to parabolic equations given on solution-dependent evolving surfaces~\cite{KOV17}. 

Domain decomposition methods have been proposed in the context of parallel time integrators, as surveyed in~\cite{gander15}, and there are several studies concerning the convergence and other theoretical aspects of space-time decomposition methods applied to parabolic equations on stationary domains, see, e.g.,~\cite{EEEH24,EEEH25,gander07,gander23,japhet20,kwok21,halp12}. However, there is no simple extension of the standard elliptic theory~\cite{quarteroni} to parabolic problems on stationary domains, and certainly not to evolving domains. The main difficulty is that the standard variational setting for parabolic problems, with solutions in the space denoted in~\cref{eq:solspace}, prevents one from deriving the equivalence between~\cref{eq:strong} and~\cref{eq:trans}. This is caused by the fact that functions with the regularity of~\cref{eq:solspace} cannot be ``glued'' together into a new function with the same regularity, see~\cite[Example~2.14]{costabel90}. 

The goals of this paper are therefore to
\begin{enumerate}
\item prove the equivalence between~\cref{eq:strong} and~\cref{eq:trans}, by introducing a suitable variational formulation;
\item demonstrate that the standard non-overlapping domain decomposition methods, including the Robin--Robin method, are well defined on evolving domains;
\item illustrate the applicability of the framework by proving that the Robin--Robin method is convergent when applied to non-autonomous parabolic equations on evolving domains.
\end{enumerate}

The main tool used to achieve the first goal is a new variational formulation with solutions in the evolving domain generalization of the space 
\begin{equation*}
H^{1/2}\bigl((0,\infty);L^2(\Omega)\bigr)\cap L^2\bigl((0,\infty);H_0^1(\Omega)\bigr);
\end{equation*}
see~\cref{sec:abs,sec:comp,sec:weak,sec:Honehalf}. This resolves the issue with ``gluing'' functions together without losing regularity, see~\cref{sec:trans}. The $H^{1/2}$-approach is due to~\cite{lionsmagenes1} for smooth stationary domains and extended to stationary Lipschitz domains in~\cite{costabel90}. We have also explored a $H^{1/2}$-setting for domain decomposition methods for parabolic problems on stationary domains in~\cite{EEEH24,EEEH25}. The study~\cite{Harbrecht} has also used a $H^{1/2}$-variational formulation in order to analyze boundary integral operators for the heat equation on evolving domains. In that work, the standard time derivative is considered instead of the material derivative, which leads to a quite different setting than ours.

The second and third goals are reached by reformulating the transmission problems and the non-overlapping domain decomposition methods in terms of time-dependent Steklov--Poincar\'e operators. As these operators become coercive in the new $H^{1/2}$-setting, one can prove that several domain decomposition methods are well defined via the Lax--Milgram theorem and show convergence for the Robin--Robin method via the abstract convergence result~\cite{lionsmercier}, see~\cref{sec:SP}. Note that this new approach even yields convergence in a stronger norm compared to the previous results on stationary domains~\cite{EEEH24,EEEH25}.  

The continuous analysis, derived in this paper, is also expected to hold in the finite-dimensional case that arises after discretizing in space and time, e.g., by using space-time finite elements, see, e.g.,~\cite{steinbach19}. However, in order to limit the scope of the paper we will restrict ourselves to the continuous case. The features and implementations of full domain decomposition finite element discretizations will be studied elsewhere. 

Throughout the paper, we will use the notation $\mathbb{R}_+=(0,\infty)$, $\mathbb{R}^0_+=[0,\infty)$ and $c$, $C$ will denote generic positive constants.  

\section{Evolving domains}\label{sec:movingInterface}

Let us describe the geometric setting. Assume that we have a bounded Lipschitz domain $\Omega(0) \subset \mathbb{R}^n$, with $n=2,3$, such that
\begin{equation*}
\overline{\Omega(0)} = \overline{\Omega_1(0)} \cup\overline{\Omega_2(0)},
\end{equation*}
where $\Omega_i(0)$, $i=1,2$, are bounded Lipschitz domains that are disjoint with common boundary 
\begin{equation*}
\Gamma(0) = \partial\Omega_1(0) \cap \partial \Omega_2(0).
\end{equation*}
We assume that $\Gamma(0)$ is a $(n-1)$-dimensional Lipschitz manifold. Note that all results will also be valid for the case $n=1$, but with a slightly altered notation. It is also possible to replace $\Omega_i(0)$ by a union of $K_i$ nonadjacent subdomains, i.e.,
\begin{equation*}
    \Omega_i(0)=\bigcup_{k=1}^{K_i}\Omega_{i, k}(0),
\end{equation*}
see~\cref{Fig:rail}, without any change to the analysis.

We now consider $\Omega(0)$ to be evolving in time, resulting in a domain $\Omega(t)$ at a later time~$t$. To this end introduce the velocity field
\begin{equation*}
\mathbf{w}\colon {\R^0_+}\times \mathbb{R}^n \to \mathbb{R}^n
\end{equation*}
together with the corresponding transformation $\Phi\colon {\mathbb{R}^0_+}\times\mathbb{R}^n \to \mathbb{R}^n$ given by 
\begin{equation*}
\frac{\mathrm{d}}{\mathrm{d}t}\Phi_t(x) = \mathbf{w}(t,\Phi_t(x)),\quad t\in \mathbb{R}_+,\quad \Phi_0(x) = x,
\end{equation*}
for all $x \in \mathbb{R}^n$. We will assume that this evolution has the properties described below.
\begin{assumption}\label{ass:phi}
The velocity field $\mathbf{w}$ generates a transformation~$\Phi$ such that 
\begin{enumerate}[label=({\roman*})]
\item $\Phi$ is an element in $C^1({\mathbb{R}^0_+}\times\mathbb{R}^n,\mathbb{R}^n)$ and satisfies the bound
\begin{equation*}
\sup_{t\in\mathbb{R}_+}\|\Phi_t\|_{C^1(\overline{B_r},\mathbb{R}^n)}\leq C<\infty
\end{equation*}
for every fixed ball $B_r=\{x\in\mathbb{R}^n:|x|<r\}$, where $C=C(r)$.
\item The inverse map $\Phi_{-}:(t,x)\mapsto (\Phi_t)^{-1}(x)$ exists and satisfies the same regularity and bound as $\Phi$.
\end{enumerate}
\end{assumption}
The domain at time $t$ is then defined as 
\begin{equation*}
\Omega(t) = \Phi_t(\Omega(0)).
\end{equation*}
The above properties of $\Phi$ imply that $\Omega(t)$ is also a bounded Lipschitz domain with boundary $\partial\Omega(t)=\Phi_t\bigl(\partial\Omega(0)\bigr)$. Furthermore, the Jacobian  $D\Phi_t(x)=\{\partial_{x_j}\Phi_t(x)_i\}_{i,j}$ is well defined and its inverse is given by
\begin{equation*}
(D\Phi_t)^{-1}(x)=(D\Phi_{-t})\bigl(\Phi_t(x)\bigr).
\end{equation*}
We also introduce the determinants
\begin{equation*}
J_t(x)=\det\bigl(D\Phi_t(x)\bigr)\quad\text{and}\quad J_{-t}(x)=\det\bigl(D\Phi_{-t}(x)\bigr)=1/J_t\bigl(\Phi_{-t}(x)\bigr).
\end{equation*}
Regarding the partition of the domain, let us write 
\begin{equation*}
\Omega_i(t)= \Phi_t\bigl(\Omega_i(0)\bigr),\quad i=1,2,
\end{equation*}
for the evolution of the disjoint components. Once again, the assumed properties of $\Phi$ give that all interiors are mapped to interiors and boundaries are mapped to boundaries, thus we obtain the bounded Lipschitz subdomains $\Omega_i(t)$ with the boundaries 
\begin{equation*}
\partial\Omega_i(t) = \Phi_t\bigl(\partial\Omega_i(0)\bigl),\quad i=1,2.
\end{equation*}
%respectively. 
We also have
\begin{equation*}
\Phi_t\bigl(\Gamma(0)\bigr) = \Phi_t\bigl(\partial \Omega_1(0) \cap \partial \Omega_2(0)\bigr) = \Phi_t\bigl(\partial \Omega_1(0)\bigr) \cap \Phi_t\bigl(\partial \Omega_2(0)\bigr) = \partial \Omega_1(t) \cap \partial \Omega_2(t),
\end{equation*}
and hence the interface~$\Gamma(0)$ between $\Omega_1(0)$ and $\Omega_2(0)$ is mapped onto the interface between $\Omega_1(t)$ and $\Omega_2(t)$, which we shall call $\Gamma(t)$, i.e.,
\begin{equation*}
\Gamma(t) = \partial \Omega_1(t) \cap \partial \Omega_2(t).
\end{equation*} 
The set $\Gamma(t)$ is again assumed to be an $(n-1)$-dimensional Lipschitz manifold. This setup is exemplified in~\cref{fig:spacetimecylinder}.
\begin{figure}
\centering
\includegraphics[width=0.45\linewidth]{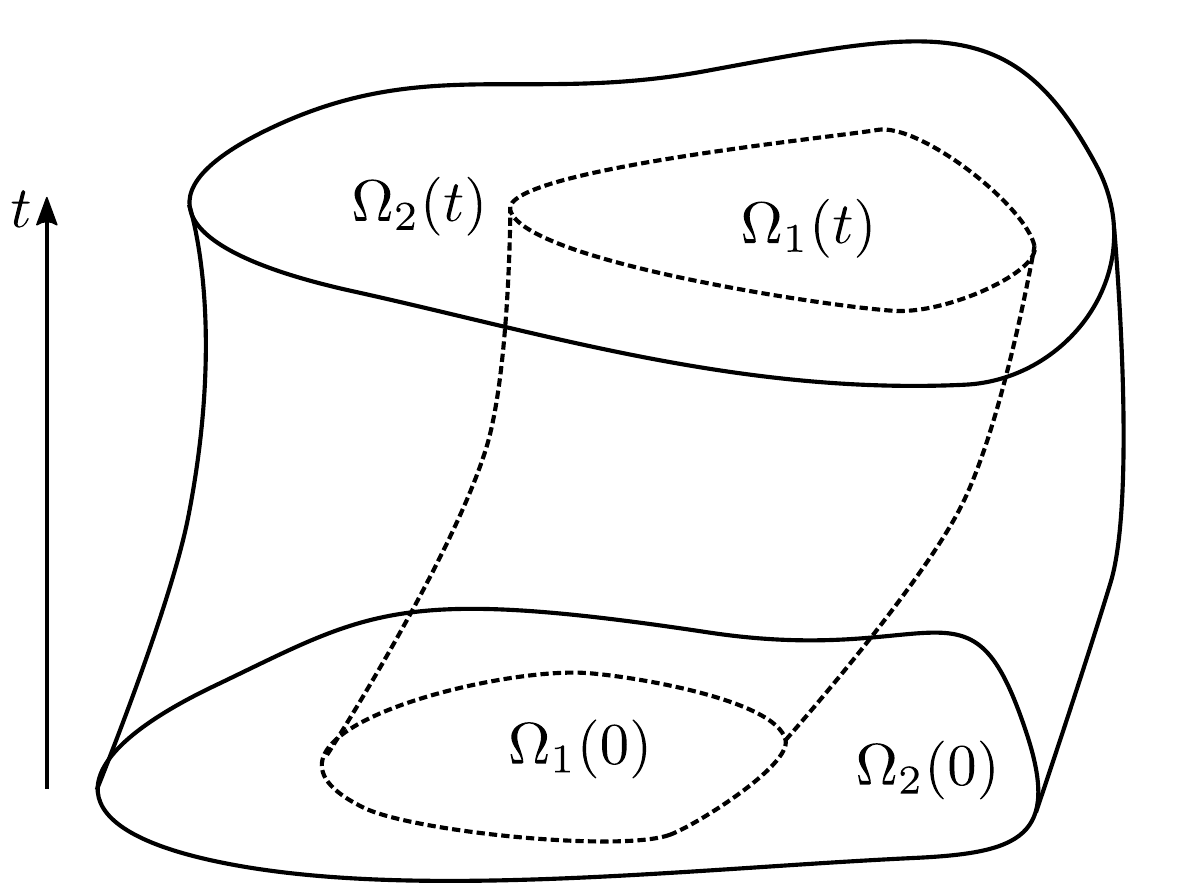}
\caption{An example of an evolving domain decomposition with an interior domain $\Omega_1$ and an exterior domain $\Omega_2$.}
\label{fig:spacetimecylinder}
\end{figure}%
Next, we observe a few more properties of $\Phi$. Let $\Omega(0)\subset B_r$, then~\cref{ass:phi} yields that
\begin{equation*}
\sup_{t \in \mathbb{R}_+}\sup_{x \in B_r} \abs{\Phi_t(x)}\leq C. 
\end{equation*}
Hence, there exists a ball $B_R$ that contains all trajectories of $\Phi$ starting in $\Omega(0)$, i.e., 
$\Omega(t)\subset B_R$ for all $t\in\mathbb{R}^0_+$. We can therefore view $\Phi$ and $\Phi_-$ as maps restricted to $B_r$ and $B_R$, respectively, where 
\begin{equation}\label{eq:Phi_on_B}
\Phi\in C_b\bigl(\mathbb{R}^0_+;C^1(\overline{B_r},\mathbb{R}^n)\bigr)\quad\text{and}\quad\Phi_-\in  C_b\bigl(\mathbb{R}^0_+;C^1(\overline{B_R},\mathbb{R}^n)\bigr).
\end{equation}
Furthermore, there are constants $c,C>0$ such that 
\begin{equation}\label{eq:Phi_biLip}
c|x-y|\leq|\Phi_t(x)-\Phi_t(y)|\leq C|x-y|\quad\text{and}\quad c\leq \abs{J_t(x)}\leq C
\end{equation}
for all $x,y\in B_r$ and $t\in \mathbb{R}^0_+$. These bounds also hold for $\Phi_{-t}$ and $J_{-t}$ with  $x,y\in B_R$. 

% \section{Abstract framework}\label{sec:abs}
\section{Abstract time-evolving function spaces}\label{sec:abs}
In this section we start by generalizing parts of the abstract framework of~\cite{amal,amal2} to a semi-infinite time interval. We then define a notion of a generalized Sobolev--Bochner space that allows in particular for fractional-in-time exponents. This theory will enable us to define the function spaces that we need on the various evolving domains and boundaries from the previous section. We begin with a notion of compatibility, see also \cite[Assumption 2.1]{amal2}.
\begin{definition}\label{def:def_compatibility}
Let $X \equiv \{X(t)\}_{t\in \mathbb{R}^0_+}$ be a family of real separable Hilbert spaces and let
\begin{equation*}
\phi_t\colon X(0) \to X(t)
\end{equation*}
be a linear and invertible map, with its inverse denoted by $\phi_{-t}$. The pair $(X, \phi)$ is said to be \emph{compatible} if
\begin{enumerate}[label=({\roman*})]
\item $\phi_0$ is the identity,
\vspace{-0.2cm}\item there exists a constant $C$ independent of $t \in \R^0_+$ such that
\begin{equation*} 
\begin{aligned}
\norm{\phi_{t} u}{X(t)} &\leq C\norm{u}{X(0)} &&\text{for all }u \in X(0),\\
\norm{\phi_{-t} u}{X(0)} &\leq C\norm{u}{X(t)} &&\text{for all }u \in X(t),
\end{aligned}
\end{equation*}
\vspace{-0.4cm}
\item for all $u \in X(0)$, the map $t \mapsto \norm{\phi_{t} u}{X(t)}$ is measurable.
\end{enumerate}
\end{definition}

If $(X,\phi)$ is compatible, then as done in \cite{amal, amal2}, we may define the $L^2$-space as
\begin{equation*}
        L_X^2(\mathbb{R}_+)=\bigl\{u:\mathbb{R}_+\to\bigcup_{t\in \mathbb{R}_+}X(t)\times\{t\}:\,t\mapsto \bigl(\Bar{u}(t), t\bigr)\text{ such that } \phi_-\Bar{u}\in L^2\bigl(\mathbb{R}_+;X(0)\bigr)\bigr\}.
\end{equation*}
We will abuse notation and simply write $u$ instead of $\Bar{u}$. This is a separable Hilbert space, with the inner product 
\begin{equation*}
(u,v)_{L^2_X(\mathbb{R}_+)} = \int_{\mathbb{R}_+} \bigl(u(t), v(t)\bigr)_{X(t)}\mathrm{d}t;
\end{equation*}
compare with \cite[Theorem 2.8]{amal}. The map 
\begin{equation*}
\phi\colon L^2\bigl(\mathbb{R}_+; X(0)\bigr) \to L^2_X(\mathbb{R}_+)
\end{equation*}
then acts as an isomorphism with an equivalence of norms, see \cite[Lemma 2.10]{amal}. In addition, if $(X,\phi)$ is compatible then so is $(X^*,(\phi_-)^*)$, where $\phi^*:X(t)^*\to X(0)^*$ denotes the dual map of $\phi$. One also has that $L^2_{X}(\mathbb{R}_+)^*\cong L^2_{X^*}(\mathbb{R}_+)$ and 
\begin{equation*}
\begin{aligned}
\langle g, v\rangle_{L^2_{X^*}(\mathbb{R}_+)\times L^2_{X}(\mathbb{R}_+)} &=\int_{\mathbb{R}_+}\langle g(t), v(t)\rangle_{X(t)^*\times X(t)}\mathrm{d}t\\
&=\langle \phi^* g, \phi_- v\rangle_{L^2(\mathbb{R}_+;X(0)^*)\times L^2(\mathbb{R}_+;X(0))}.
\end{aligned}
\end{equation*}
See \cite[p.6 and Lemmas~2.14--15]{amal} for the above statement.

\begin{definition}
For a space $Y\hookrightarrow  L^2(\mathbb{R}_+)$, we use the notation
 \begin{equation*}
 Y_X(\mathbb{R}_+)=\bigl\{v \in  L_X^2(\mathbb{R}_+):\, \phi_-v\in Y\bigl(\mathbb{R}_+;X(0)\bigr)\bigr\}.
\end{equation*}
The norm on $Y_X(\mathbb{R}_+)$ is defined as
\begin{equation*}
\begin{aligned}
        \|v\|_{Y_X(\mathbb{R}_+)}=\|\phi_-v\|_{Y(\mathbb{R}_+; X(0))}.
\end{aligned}
\end{equation*}
\end{definition} 
A consequence of the above is that the restricted map 
\begin{equation*}
\phi\colon Y\bigl(\mathbb{R}_+; X(0)\bigr) \to Y_X(\mathbb{R}_+)
\end{equation*}
is by definition an isometric isomorphism. Note that the above definition of $\|\cdot\|_{Y_X(\mathbb{R}_+)}$ yields an equivalent norm to $\|\cdot\|_{L^2_X(\mathbb{R}_+)}$ for $Y= L^2(\mathbb{R}_+)$, due to compatibility of $(X,\phi)$. However, the norms do not necessarily coincide. 

\begin{lemma}\label{lem:Ysep}
If $(X,\phi)$ is compatible and $Y\bigl(\mathbb{R}_+; X(0)\bigr)$ is a separable Hilbert space, then $Y_X(\mathbb{R}_+)$ is a separable Hilbert space.
\end{lemma}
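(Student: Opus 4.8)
The plan is to transport the whole question back to the fixed space $X(0)$ via the isometric isomorphism $\phi\colon Y(\mathbb{R}_+;X(0)) \to Y_X(\mathbb{R}_+)$ that was established immediately before the statement. Since $Y(\mathbb{R}_+;X(0))$ is assumed to be a separable Hilbert space, and an isometric isomorphism of normed spaces preserves completeness, the inner-product structure, and separability, the space $Y_X(\mathbb{R}_+)$ inherits all three properties. So the core of the proof is simply: (1) observe $\phi$ is an isometric isomorphism onto $Y_X(\mathbb{R}_+)$ by the definition of the norm $\|\cdot\|_{Y_X(\mathbb{R}_+)} = \|\phi_- \cdot\|_{Y(\mathbb{R}_+;X(0))}$; (2) pull back the inner product of $Y(\mathbb{R}_+;X(0))$ to define one on $Y_X(\mathbb{R}_+)$, and check via the polarization identity that it induces the norm $\|\cdot\|_{Y_X(\mathbb{R}_+)}$; (3) conclude completeness because Cauchy sequences correspond under $\phi$; (4) conclude separability by taking the image under $\phi$ of a countable dense subset of $Y(\mathbb{R}_+;X(0))$.

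In slightly more detail, I would first note that $Y_X(\mathbb{R}_+)$ is by construction a linear subspace of $L^2_X(\mathbb{R}_+)$, and $\phi_-$ restricted to it maps bijectively onto $Y(\mathbb{R}_+;X(0))$ with inverse $\phi$; this is exactly the content of the sentence ``$\phi\colon Y(\mathbb{R}_+;X(0)) \to Y_X(\mathbb{R}_+)$ is by definition an isometric isomorphism.'' Then I would define
\begin{equation*}
(u,v)_{Y_X(\mathbb{R}_+)} := (\phi_- u, \phi_- v)_{Y(\mathbb{R}_+;X(0))}
\end{equation*}
for $u,v \in Y_X(\mathbb{R}_+)$. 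Bilinearity, symmetry and positive-definiteness are immediate from those of the inner product on $Y(\mathbb{R}_+;X(0))$ together with injectivity of $\phi_-$, and $(u,u)_{Y_X(\mathbb{R}_+)} = \|\phi_- u\|_{Y(\mathbb{R}_+;X(0))}^2 = \|u\|_{Y_X(\mathbb{R}_+)}^2$, so this inner product induces the given norm. For completeness: if $\{u_k\}$ is Cauchy in $Y_X(\mathbb{R}_+)$ then $\{\phi_- u_k\}$ is Cauchy in $Y(\mathbb{R}_+;X(0))$, hence converges to some $g$, and $\phi g \in Y_X(\mathbb{R}_+)$ with $\|u_k - \phi g\|_{Y_X(\mathbb{R}_+)} = \|\phi_- u_k - g\|_{Y(\mathbb{R}_+;X(0))} \to 0$. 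For separability: let $\{g_j\}_{j\in\mathbb{N}}$ be dense in $Y(\mathbb{R}_+;X(0))$; then $\{\phi g_j\}_{j\in\mathbb{N}}$ is dense in $Y_X(\mathbb{R}_+)$ since $\phi$ is a homeomorphism.

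Honestly, there is no serious obstacle here — the lemma is essentially a bookkeeping statement recording that the already-noted isometric isomorphism transfers Hilbert-space structure and separability. The only point requiring a word of care is making sure that the abstract $Y_X(\mathbb{R}_+)$ really is closed under the vector space operations inherited from $L^2_X(\mathbb{R}_+)$ (so that ``subspace'' makes sense), which follows because $\phi_-$ is linear and $Y(\mathbb{R}_+;X(0))$ is a linear subspace of $L^2(\mathbb{R}_+;X(0))$; and that the pulled-back inner product is compatible with the ambient $L^2_X$ structure, which is not actually needed for the statement but is reassuring. I would keep the write-up to a few lines, citing the isometric isomorphism established just above the statement and the standard fact that isometric isomorphisms preserve separability and completeness.
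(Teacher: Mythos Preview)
Your proposal is correct and follows essentially the same approach as the paper: both arguments transport Cauchy sequences through the isometry $\phi_-$ to use completeness of $Y(\mathbb{R}_+;X(0))$, and both push forward a countable dense set (the paper uses an orthonormal basis, you use an arbitrary dense sequence) via $\phi$ for separability. The paper is slightly more explicit about verifying that the limit $\phi g$ actually lands in $L^2_X(\mathbb{R}_+)$ using compatibility, while you package this into the observation that $\phi$ is already known to be a bijection onto $Y_X(\mathbb{R}_+)$; both are fine.
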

\begin{proof}
Take a Cauchy sequence $\{y_n\}$ belonging to $Y_X(\mathbb{R}_+).$ By definition, for every $\epsilon>0$, there exists an $N$ such that if $n,m \geq N$, we have
\begin{equation*}
 \norm{\phi_{-}y_n-\phi_{-}y_m}{Y(\mathbb{R}_+;X(0))} = \norm{y_n-y_m}{Y_X(\mathbb{R}_+)} \leq \epsilon.
\end{equation*}
As $Y\bigl(\mathbb{R}_+;X(0)\bigr)$ is a Hilbert space, $\phi_{-}y_n \to z$ in $Y\bigl(\mathbb{R}_+;X(0)\bigr)$ to some $z\in Y\bigl(\mathbb{R}_+;X(0)\bigr)$. Since $Y\bigl(\mathbb{R}_+;X(0)\bigr)$ is a subset of $L^2\bigl(\mathbb{R}_+;X(0)\bigr)$ and we have compatibility, it follows that $\phi z \in L^2_X.$ Hence, 
$\phi z \in Y_X(\mathbb{R}_+).$ We have
\begin{equation*}
\norm{y_n - \phi z}{Y_X(\mathbb{R}_+)} = \norm{\phi_-y_n - z}{Y(\mathbb{R}_+; X(0))} \to 0
\end{equation*}
as $n$ tends to infinity. Thus, the sequence $\{y_n\}$ is convergent in $Y_X(\R_+)$. This shows that every Cauchy sequence converges, hence it is complete and a Hilbert space.

For the separability, if $\{e_i\}$ is a countable orthonormal basis of $Y\bigl(\mathbb{R}_+; X(0)\bigr)$, we can write an arbitrary $z \in Y\bigl(\mathbb{R}_+; X(0)\bigr)$ as $z = \sum (z, e_i)_{Y(\mathbb{R}_+; X(0))}e_i.$ Then we have
\begin{equation*}
 \textstyle{       \phi z = \sum_i (z, e_i)_{Y(\mathbb{R}_+; X(0))}\phi e_i = \sum_i (\phi z, \phi e_i)_{Y_X(\mathbb{R}_+)}\phi e_i.}
\end{equation*}
Since any element of $Y_X(\mathbb{R}_+)$ can be written as $\phi z$ for some $z \in Y\bigl(\mathbb{R}_+; X(0)\bigr)$ and because $L^2_X$ and $L^2\bigl(\mathbb{R}_+; X(0)\bigr)$ are isomorphic via $\phi$, we see that $\{\phi e_i\}$ is a countable dense subset and thus the space is separable.
\end{proof}

Consider a space $Y\hookrightarrow  L^2(\mathbb{R}_+)$ and assume that $Y\bigl(\mathbb{R}_+; X(0)\bigr)$ is a separable Hilbert space. Next, introduce two compatible pairs $(X,\phi)$ and $(Z,\phi)$, where $Z(t)\hookrightarrow  X(t)$ for all $t\in\mathbb{R}_+$. The subset 
\begin{equation*}
Y\bigl(\mathbb{R}_+; X(0)\bigr)\cap L^2\bigl(\mathbb{R}_+; Z(0)\bigr)\subseteq L^2\bigl(\mathbb{R}_+; X(0)\bigr)
\end{equation*}
then also becomes a separable Hilbert space when equipped with the inner product
\begin{equation*}
u,v\mapsto(u,v)_{Y(\mathbb{R}_+; X(0))}+(u,v)_{ L^2(\mathbb{R}_+; Z(0))}.
\end{equation*}
The same holds for $Y_{X}(\mathbb{R}_+)\cap L^2_Z(\mathbb{R}_+)\subseteq L^2_X(\mathbb{R}_+)$ by~\cref{lem:Ysep}. The restriction of the map $\phi\colon L^2\bigl(\mathbb{R}_+; X(0)\bigr) \to L^2_X(\mathbb{R}_+)$, i.e.,
\begin{equation*}
\phi\colon Y\bigl(\mathbb{R}_+; X(0)\bigr)\cap L^2\bigl(\mathbb{R}_+; Z(0)\bigr)\to Y_{X}(\mathbb{R}_+)\cap L^2_Z(\mathbb{R}_+),
\end{equation*}
is an isometric isomorphism.

\section{Compatibility for spaces defined on evolving domains}\label{sec:comp}

In this section we wish to apply the theory of \cref{sec:abs} to concrete function spaces on evolving domains and boundaries. This mainly involves checking that compatibility holds in the sense of \cref{def:def_compatibility}. The results here improve those present in \cite{amal, amal2, AESIFB} because we assume much weaker regularity on the domains and the evolution than there.

Let $M \equiv \{M(t)\}_{t\in \mathbb{R}^0_+}$ be a family of Lipschitz domains, with $M(0)\subset B_r\subset\mathbb{R}^n$ and $M(t)\subset B_R\subset\mathbb{R}^n$, playing the role of $\Omega_i$ or $\Omega$. Similarly, let  $S$ be a family of $(n-1)$-dimensional Lipschitz manifolds %{\color{blue}why do we use term manifold? does this mean that Charts mapping are Lip? we could also sat just hypersurface since we are in 2dim or 3dim? or is this submanifold?} 
representing either~$\partial\Omega_i$, $\partial\Omega$, or $\Gamma$. We refer to~\cite[Sections~6.2--3]{kufner} and~\cite{Naumann2011Measure} for an in-depth treatment of Lipschitz manifolds and the related surface integrals. 

In the setting of evolving domains, we will consider two families of compatible pairs given by the maps $\phi$ and $\psi$ relating to function spaces over $M$ and $S$, respectively.  
\begin{definition}
The maps $\phi$ and $\psi$, together with their inverses, are identified as the compositions
\begin{enumerate}[label=({\roman*})]
\item $\phi_tu=u\circ \Phi_{-t}\quad\text{and}\quad\phi_{-t}u=u\circ \Phi_t$,
\item $\psi_tu=u\circ \bigl(\Phi_{-t}|_{S(t)}\bigr)\quad\text{and}\quad\psi_{-t}u=u\circ \bigl(\Phi_{t}|_{S(0)}\bigr)$,
\end{enumerate}
respectively.
\end{definition}
Note that this choice of $\phi$ and $\psi$ trivially fulfills the first compatibility property of~\cref{def:def_compatibility}. For notational simplicity, we also make use of the notation $\phi,\psi$ for matrix-valued maps, e.g., for $A:M(0)\to \mathbb{R}^{n\times n}$ we can write 
\begin{equation*}
\phi_{-t}A(x)=(A\circ\Phi_{t})(x)=A\bigl(\Phi_{t}(x)\bigr).
\end{equation*}

The task is now to prove the remaining compatibility properties for the function spaces arising when deriving the weak formulation of~\cref{eq:strong}. 

\subsection{Spaces on the interior}

\begin{lemma}\label{lem:H1Compatibility}
If \cref{ass:phi} holds then $(L^2(M), \phi)$ and $(H^1(M), \phi)$ are compatible pairs. 
\end{lemma}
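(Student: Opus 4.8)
The plan is to verify the three remaining properties in \cref{def:def_compatibility} for each of the two pairs, using the bi-Lipschitz bounds \cref{eq:Phi_biLip} and the $C^1$-regularity from \cref{ass:phi} together with the change-of-variables formula $y=\Phi_t(x)$. The key computational identity is that for $u\in L^2(M(t))$,
\begin{equation*}
\norm{\phi_{-t}u}{L^2(M(0))}^2=\int_{M(0)}|u(\Phi_t(x))|^2\,\dx=\int_{M(t)}|u(y)|^2\,|J_{-t}(y)|\,\mathrm{d}y,
\end{equation*}
and symmetrically for $\phi_t$; since $c\le|J_{\pm t}|\le C$ uniformly in $t$ by \cref{eq:Phi_biLip}, both bounds in property (ii) follow immediately with constants independent of $t$, giving in fact norm equivalence on $L^2$. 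For property (iii), the map $t\mapsto\norm{\phi_t u}{L^2(M(t))}^2=\int_{M(0)}|u(x)|^2J_t(x)\,\dx$ (again by change of variables) is continuous in $t$ because $(t,x)\mapsto J_t(x)$ is continuous by \cref{ass:phi}(i) and dominated convergence applies on the fixed bounded set $M(0)$; continuity implies measurability. Property (i) is immediate since $\Phi_0=\mathrm{id}$.

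For $(H^1(M),\phi)$ the structure is the same but the gradient must be transformed via the chain rule: $\grad(\phi_{-t}u)(x)=D\Phi_t(x)^\top(\grad u)(\Phi_t(x))$. First I would record that $\phi_{-t}$ maps $H^1(M(t))$ into $H^1(M(0))$ — this uses that $\Phi_{\pm t}$ is a bi-Lipschitz $C^1$ map, so composition preserves weak differentiability and the chain rule holds (a standard fact for Lipschitz changes of variables). Then, combining the $L^2$ estimate above for the zeroth-order term with
\begin{equation*}
\norm{\grad(\phi_{-t}u)}{L^2(M(0))}^2\le\sup_{x\in B_R}\|D\Phi_t(x)\|^2\int_{M(0)}|(\grad u)(\Phi_t(x))|^2\,\dx\le C\int_{M(t)}|\grad u(y)|^2\,\mathrm{d}y,
\end{equation*}
where the uniform bound on $\|D\Phi_t\|$ comes from \cref{ass:phi}(i) and \cref{eq:Phi_on_B}, and $|J_{-t}|\le C$ handles the Jacobian, yields property (ii) for $H^1$; the reverse inequality is symmetric using $\Phi_{-t}$, $D\Phi_{-t}$, and \cref{ass:phi}(ii). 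For property (iii) I would write $\norm{\phi_t u}{H^1(M(t))}^2$ after change of variables as an integral over the fixed domain $M(0)$ whose integrand depends continuously on $t$ through $J_t(x)$ and the entries of $(D\Phi_t)^{-1}$; continuity of $t\mapsto\norm{\phi_t u}{H^1(M(t))}$ then follows by dominated convergence, hence measurability.

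The main obstacle is the $H^1$ chain-rule step: one must justify that $u\circ\Phi_{\pm t}\in H^1$ whenever $u\in H^1$ and that $\grad(u\circ\Phi_{\pm t})=(D\Phi_{\pm t})^\top(\grad u)\circ\Phi_{\pm t}$ holds as an $L^2$ identity, given only that $\Phi_t$ is $C^1$ and bi-Lipschitz — not $C^\infty$. This is where the low regularity allowed by \cref{ass:phi} (compared to \cite{amal,amal2}) actually bites. I would handle it by approximating $u$ in $H^1(M(t))$ by smooth functions, applying the classical chain rule to each, and passing to the limit using the uniform bounds on $D\Phi_{\pm t}$ and $J_{\pm t}$ from \cref{eq:Phi_biLip} and \cref{eq:Phi_on_B}; the bi-Lipschitz property guarantees that $H^1(M(t))$-convergence is preserved under composition, so the limit identity is valid. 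Everything else reduces to the change-of-variables computations sketched above.
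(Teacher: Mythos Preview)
Your proposal is correct and follows essentially the same route as the paper: pull everything back to $M(0)$ by change of variables, use the uniform bounds on $J_t$ and $D\Phi_{\pm t}$ from \cref{ass:phi} for property~(ii), and read off measurability from the $t$-dependence of the pulled-back integrand. The paper differs only cosmetically---it routes the measurability argument through Pettis' theorem (viewing $|J|\colon\mathbb{R}_+\to L^\infty$ as weakly measurable) rather than your direct dominated-convergence continuity argument, and it simply asserts the $H^1$ chain-rule identity without the density justification you (rightly) flag.
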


\begin{proof}
We prove the $H^1$ case as the $L^2$ case follows by a simpler argument. To prove the second property of~\cref{def:def_compatibility}, consider $u\in H^1\bigl(M(0)\bigr)$ and $v=\phi_t u\in H^1\bigl(M(t)\bigr)$. One then has, pointwise a.e., the formulae
\begin{equation*}
\begin{gathered}
\nabla(\phi_{-t} v) = (D\Phi_t)^T \phi_{-t}(\nabla v)\quad\Leftrightarrow\quad
\nabla u\ = (D \Phi_t)^T \phi_{-t}(\nabla \phi_{t} u)\quad\Leftrightarrow\\
\phi_{-t}(\nabla \phi_{t} u)= (D \Phi_t)^{-\mathrm{T}}\nabla u\quad\Leftrightarrow\quad
\phi_{-t}(\nabla \phi_{t} u)=(\phi_{-t}D\Phi_{-t})^{\mathrm{T}}\nabla u.
\end{gathered}
\end{equation*}
This yields the identities 
\begin{align}
 \nonumber \norm{\phi_t u}{H^1(M(t))}^2 &= \int_{M(t)} |\phi_t u|^2+|\nabla \phi_t u|^2\,\mathrm{d}x_t\\
 \nonumber &=\int_{M(0)}  \phi_{-t}\bigl(|\phi_t u|^2+|\nabla \phi_t u|^2\bigr)|J_t|\,\mathrm{d}x_0\\
&=\int_{M(0)}  u^2|J_t|+|(\phi_{-t}D\Phi_{-t})^{\mathrm{T}}\grad u|^2|J_t| \,\mathrm{d}x_0.\label{eq:H1comp}
\end{align}
By~\cref{ass:phi} we have $D\Phi_-\in C({\mathbb{R}^0_+}\times\mathbb{R}^n,\mathbb{R}^{n\times n})$, $|J|\in C({\mathbb{R}^0_+}\times\mathbb{R}^n,\mathbb{R})$, and the bound 
\begin{equation}\label{eq:Phibound}
\begin{aligned}
 &\sup_{t\in\mathbb{R}_+}  \sup_{x\in M(0)} |(\phi_{-t}D\Phi_{-t})^{\mathrm{T}}(x)|_{2}^2\,|J_t(x)|\\
&\quad\leq  \sup_{t\in\mathbb{R}_+}  \sup_{x\in M(0),y\in M(t)}|(D\Phi_{-t})(y)|_F^2|J_t(x)|\\
&\quad\leq \sup_{t\in\mathbb{R}_+} \sup_{x\in B_r,y\in B_R} \max_{i,j=1,\ldots,n}6n^2|\partial_{y_j}\Phi_{-t}(y)_i|^2\,|\partial_{x_j} \Phi_t(x)_i|^n\\
&\quad\leq  C(n)\bigl(\sup_{t\in\mathbb{R}_+}\|\Phi_{-t}\|_{C^1(\overline{B_R},\mathbb{R}^n)}\bigr)^2\bigl(\sup_{t\in\mathbb{R}_+}\|\Phi_{t}\|_{C^1(\overline{B_r},\mathbb{R}^n)}\bigr)^n<\infty,
\end{aligned}
\end{equation}
where $|\cdot|_{2}$ and $|\cdot|_{F}$ refer to the Euclidean and Frobenius matrix norms respectively. Hence, the second term of~\cref{eq:H1comp} is bounded by $C\|\nabla u\|^2_{L^2(M(0),\mathbb{R}^n)}$ and a similar argument for the first term yields that 
\begin{equation*}
\norm{\phi_{t} u}{H^1(M(t))} \leq C\norm{u}{H^1(M(0))}\quad\text{for all }u \in H^1\bigl(M(0)\bigr),
\end{equation*}
with a constant $C$ independent of $t \in \R^0_+$. The reverse bound, i.e., $\norm{\phi_{-t} u}{H^1(M(0))} \leq C\norm{u}{H^1(M(t))}$, holds as the very same properties are assumed for $\Phi$ and $\Phi_-$.

We finally consider the third property of~\cref{def:def_compatibility}. Again by~\cref{ass:phi}, we have that $|J| \in L^\infty\bigl(\mathbb{R}_+;L^\infty(M(0))\bigr)$ and hence $|J|\colon \mathbb{R}_+\to L^\infty\bigl(M(0)\bigr)$ is strongly measurable. By Pettis' theorem, it is also weakly measurable. For an element $v\in L^\infty\bigl(M(0)\bigr)$ the functional $g(v) = \int_{M(0)}u^2v\,\mathrm{d}x_0$ clearly satisfies $g \in L^\infty\bigl(M(0)\bigr)^*$ since $u \in L^2\bigl(M(0)\bigr)$. Thus
\begin{equation*}
t \mapsto g(|J_t|) = \int_{M(0)}u^2 |J_t|\,\mathrm{d}x_0
\end{equation*}
is measurable.  This yields measurability of the first term of the integral~\cref{eq:H1comp}. A similar argument gives measurability of the entire integral.
\end{proof}
\subsection{Spaces on the boundary}
Let us now address function spaces defined over boundaries.
\begin{lemma}\label{lem:compatL2S}
If \cref{ass:phi} holds then $\bigl(L^2(S), \psi\bigr)$ is a compatible pair. 
\end{lemma}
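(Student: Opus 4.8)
Since $\psi_0$ is the identity, the first property of \cref{def:def_compatibility} is immediate, and the plan is to establish the boundedness and measurability properties by adapting the proof of \cref{lem:H1Compatibility}, replacing the Lebesgue measure by the surface measure and the volume Jacobian $J_t$ by a tangential Jacobian $J_t^S$.

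For the boundedness, I would cover the compact $(n-1)$-dimensional Lipschitz manifold $S(0)$ by finitely many bi-Lipschitz charts $\gamma_k\colon U_k\subset\mathbb{R}^{n-1}\to S(0)$ with a subordinate partition of unity; then $\Phi_t\circ\gamma_k$ parametrizes the corresponding patch of $S(t)=\Phi_t\bigl(S(0)\bigr)$. Applying the area formula in each chart gives, for $u\in L^2\bigl(S(0)\bigr)$,
\[
\norm{\psi_t u}{L^2(S(t))}^2=\int_{S(t)}\abs{u\circ\Phi_{-t}}^2\,\mathrm{d}\sigma_t=\int_{S(0)}\abs{u}^2\,J_t^S\,\mathrm{d}\sigma_0 ,
\]
where on the chart $\gamma_k$ one has $J_t^S\circ\gamma_k=\sqrt{\det G_t^k/\det G_0^k}$ with the Gram matrix $G_t^k=\bigl(D(\Phi_t\circ\gamma_k)\bigr)^{\mathrm{T}}D(\Phi_t\circ\gamma_k)$. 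By the chain rule, which holds a.e.\ as all maps are Lipschitz, $D(\Phi_t\circ\gamma_k)=(D\Phi_t\circ\gamma_k)\,D\gamma_k$, so for a.e.\ point and all $w\in\mathbb{R}^{n-1}$ the bi-Lipschitz bounds \cref{eq:Phi_biLip} give $w^{\mathrm{T}}G_t^k w=\abs{D\Phi_t\,D\gamma_k w}^2\in[c^2\,w^{\mathrm{T}}G_0^k w,\,C^2\,w^{\mathrm{T}}G_0^k w]$, whence $c^{2(n-1)}\det G_0^k\leq\det G_t^k\leq C^{2(n-1)}\det G_0^k$ and therefore
\[
c^{n-1}\leq J_t^S\leq C^{n-1}\quad\text{a.e.\ on }S(0),
\]
with constants independent of $t$ (here $\det G_0^k$ is bounded away from $0$ and $\infty$ on the compact pieces since $\gamma_k$ is bi-Lipschitz). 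This yields $\norm{\psi_t u}{L^2(S(t))}\leq C\norm{u}{L^2(S(0))}$, and the reverse bound follows identically because \cref{ass:phi} imposes the same regularity and bounds on $\Phi_-$.

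For the measurability property I would argue exactly as at the end of \cref{lem:H1Compatibility}. Since $\Phi\in C_b\bigl(\mathbb{R}^0_+;C^1(\overline{B_r},\mathbb{R}^n)\bigr)$ by \cref{eq:Phi_on_B}, the map $t\mapsto(D\Phi_t\circ\gamma_k)$ is continuous into $L^\infty(U_k)$, hence so is $t\mapsto J_t^S$ into $L^\infty\bigl(S(0)\bigr)$; in particular it is strongly, and thus by Pettis' theorem weakly, measurable. As $u\in L^2\bigl(S(0)\bigr)$ and $S(0)$ has finite surface measure, the functional $v\mapsto\int_{S(0)}\abs{u}^2 v\,\mathrm{d}\sigma_0$ belongs to $L^\infty\bigl(S(0)\bigr)^*$, so $t\mapsto\int_{S(0)}\abs{u}^2 J_t^S\,\mathrm{d}\sigma_0=\norm{\psi_t u}{L^2(S(t))}^2$ is measurable, as required.

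The step I expect to be the main obstacle is the surface change-of-variables: confirming that the intrinsic surface measure of the Lipschitz manifold $S(t)$ agrees, after transport by $\Phi_t$, with $J_t^S\,\mathrm{d}\sigma_0$ on $S(0)$, and extracting the two-sided bounds on $J_t^S$ uniformly in $t$. Localization to bi-Lipschitz charts together with the area formula handles this, the only delicate point being that tangent spaces, normals and Gram matrices exist merely a.e.; the $C^1$ and bi-Lipschitz control from \cref{ass:phi} and \cref{eq:Phi_biLip} then makes all bounds uniform. The remaining arguments are routine repetitions of \cref{lem:H1Compatibility}.
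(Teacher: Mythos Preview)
Your proposal is correct and follows essentially the same route as the paper: localize to Lipschitz charts, derive the surface change-of-variables with an explicit tangential Jacobian, bound it uniformly in $t$ from the $C^1$/bi-Lipschitz control of \cref{ass:phi}, and obtain measurability via continuity in $t$ together with Pettis' theorem. The only difference is presentational: the paper handles $n=2$ and $n=3$ separately, writing the Jacobian as $\omega_{2,t}(x)=\lvert D\Phi_t(x)\tau(x)\rvert$ with $\tau$ the unit tangent (and the analogous cross-product expression $\omega_{3,t}$ for $n=3$) and bounding via the Frobenius norm of $D\Phi_t$, whereas your Gram-matrix formulation $J_t^S=\sqrt{\det G_t^k/\det G_0^k}$ with the Loewner-order estimate coming from \cref{eq:Phi_biLip} is the dimension-independent version of the same computation.
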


\begin{proof}
For simplicity, we first consider the case $n=2$. The curve $S(0)$ is then the union of finitely many open, overlapping sets 
\begin{equation*}
S_\ell(0)=\{x\in \mathbb{R}^2: x(\xi)=\tilde{A}_\ell^{-1}\bigr(\xi,\sigma_\ell(\xi)\bigl)^\mathrm{T}\text{ for }\xi\in[-a,a]\}.
\end{equation*}
Here, $\sigma_\ell\colon [-a,a]\to\mathbb{R}$ are Lipschitz continuous maps, and $\tilde{A}_\ell$ are affine transformations, i.e., $\tilde{A}_\ell x=A_\ell x+b_\ell$, where $A_\ell$ are orthonormal matrices with $\mathrm{det}A_\ell=1$. The curve integral over $S(0)$ can then be defined as 
\begin{equation*}
\int_{S(0)} v \,\mathrm{d}s_0 = \sum_{\ell=1}^M \int_{-a}^a (\varphi_\ell v)\bigl(x(\xi)\bigr)\,|x_\xi(\xi)|\,\mathrm{d}\xi, 
\end{equation*}
with the tangential derivative $x_\xi=A_\ell^{-1}(1,\sigma'_\ell)^\mathrm{T}\in L^\infty\bigl(S_\ell(0),\mathbb{R}^2\bigr)$, for any partition of unity $\{\varphi_\ell\}\subset C(S(0))$ of the curve~$S(0)$. Note that the regularity of $x_\xi$ follows by the Lipschitz continuity of $\sigma_\ell$, see~\cite[Theorem~6.2.14]{kufner}. Also observe that the denominator $|x_\xi|^2=1+(\sigma'_\ell)^2$ is nonzero.

Let $y(\xi)=\Phi_t x(\xi)$, then $y_\xi=\mathrm{D}\Phi_t(x)x_\xi$. Furthermore, if $\{\varphi_\ell\}\subset C\bigl(S(t)\bigr)$ is a partition of unity of the curve~$S(t)$ then $\{\psi_{-t}\varphi_\ell\}\subset C\bigl(S(0)\bigr)$ becomes a partition of unity of~$S(0)$. We then have 
\begin{equation}\label{eq:intident}
\begin{aligned}
\int_{S(t)} v(y) \,\mathrm{d}s_t &= \sum_{\ell=1}^M \int_{-a}^a (\varphi_\ell v)\bigl(y(\xi)\bigr)\,|y_\xi(\xi)|\,\mathrm{d}\xi\\
                                         &= \sum_{\ell=1}^M \int_{-a}^a (\varphi_\ell v)\bigl(\Phi_t x(\xi))\,\left|\mathrm{D}\Phi_t\bigr(x(\xi)\bigr)\frac{x_\xi(\xi)}{|x_\xi(\xi)|}\right|\,
                                              |x_\xi(\xi)|\mathrm{d}\xi\\
                                         &= \int_{S(0)} (\psi_{-t}v)(x)\,\omega_{2,t}(x)\,\mathrm{d}s_0,
\end{aligned}
\end{equation}
where $\omega_{2,t}(x)=|\mathrm{D}\Phi_t(x)\tau(x)|$ with $\tau\in L^\infty\bigl(S(0),\mathbb{R}^2\bigr)$ denoting the normalized tangent vector of $S(0)$. By~\cref{ass:phi} we obtain that $\omega_{2}\in L^\infty\bigl(\mathbb{R}_+;L^\infty\bigl(S(0))\bigr)$ as
\begin{equation}\label{eq:omegabound}
\begin{aligned}
&\omega_{2,t}(x) \leq |\mathrm{D}\Phi_t(x)|_2 |\tau(x)|_2\leq \sup_{y\in S(0)} |\mathrm{D}\Phi_t(y)|_F\cdot 1\\
&\qquad\leq \sup_{t\in\mathbb{R}_+}\sup_{y\in B_r}\max_{i,j=1,\ldots,n} n|\partial_{y_j}\Phi_{t}(y)_i|\leq C\sup_{t\in\mathbb{R}_+}\|\Phi_{t}\|_{C^1(\overline{B_r},\mathbb{R}^n)}<\infty
\end{aligned}
\end{equation}
for a.e.\ $x\in S(0)$ and every $t\in\mathbb{R}_+$.

From~\cref{eq:intident} it is clear that $\psi_{-t}v\in L^1\bigl(S(0)\bigr)$ if $v\in L^1\bigl(S(t)\bigr)$ and $\psi_{t}v\in L^1\bigl(S(t)\bigr)$ if $v\in L^1\bigl(S(0)\bigr)$. Next, consider $u\in L^2\bigl(S(0)\bigr)$. Replacing $v$ by 
$\psi_{t} u^2$ in~\cref{eq:intident} yields that
\begin{equation*}
\|\psi_{t} u\|_{L^2(S(t))}=\| u\,\sqrt{\omega_{2,t}}\|_{L^2(S(0))}\leq C \|u\|_{L^2(S(0))}.
\end{equation*}
Here, the constant~$C$ is uniform in time by~\cref{eq:omegabound}. The reverse bound of~\cref{def:def_compatibility} follows by simply replacing $S(t)$ with $S(0)$ and vice versa in the above argumentation. 

The measurability, i.e., the third property of~\cref{def:def_compatibility}, can be shown in a similar way to \cref{lem:H1Compatibility}. Hence, $\bigl(L^2(S), \psi\bigr)$ is a compatible pair for $n=2$.

The compatibility for $n=3$ follows in the same fashion with 
\begin{equation*}
S_\ell(0)=\{x\in \mathbb{R}^3: x(\xi)=\tilde{A}_\ell^{-1}\bigr(\xi_1,\xi_2,\sigma_\ell(\xi)\bigl)^\mathrm{T}\text{ for }\xi\in[-a,a]^2\},
\end{equation*}
$x_\xi$ replaced by $\partial_{\xi_1} x\,\times\,\partial_{\xi_2} x$, and $\omega_{2,t}$ replaced by 
\begin{equation*}
\omega_{3,t}\bigr(x(\xi)\bigr)=\frac{|\mathrm{D}\Phi_t\bigl(x(\xi)\bigr)\partial_{\xi_1} x(\xi)\,\times\,\mathrm{D}\Phi_t\bigl(x(\xi)\bigr)\partial_{\xi_2} x(\xi)|}{|\partial_{\xi_1} x(\xi)\,\times\,\partial_{\xi_2} x(\xi)|}.
\end{equation*}
Note that the replaced terms are all in~$L^\infty$, $|\partial_{\xi_1} x\,\times\,\partial_{\xi_2} x|^2=1+\sum_{i=1}^2(\partial_{\xi_i}\sigma_\ell)^2$ is nonzero, and the bound~\cref{eq:omegabound} holds as $Ax\times Ay=\det(A)A^{-\mathrm{T}}x\times y$.
\end{proof}
Regarding Sobolev spaces over the manifolds $S$, we introduce the space $H^{1/2}\bigl(S(t)\bigr)$ defined as
\begin{equation*}\label{eq:slobodetskii}
	\begin{gathered}
	    H^{1/2}\bigl(S(t)\bigr)=\{u\in L^2\bigl(S(t)\bigr):  \|u\|_{H^{1/2}(S(t))}<\infty\}\quad\text{with}\\
	    \|u\|_{H^{1/2}(S(t))} =
	                 \Big(|u|_{H^{1/2}(S(t))}^2
	                        +\|u\|^2_{L^2(S(t))}\Big)^{1/2}\quad\text{and}\\
	    |u|_{H^{1/2}(S(t))} =
	                 \Big(\int_{S(t)}\int_{S(t)}\frac{\abs{u(x)-u(y)}^2}{\abs{x-y}^{d}}\,\mathrm{d}s_t\,\mathrm{d}s_t\Big)^{1/2}.
	\end{gathered}
\end{equation*}
Denoting the extension by zero from $\Gamma(t)$ to $\partial\Omega_i(t)$ by $e_{\partial\Omega_i(t)}$, we also define the Lions--Magenes space as
\begin{equation*}
	\begin{gathered}
	    \Lambda(t)=\{u\in L^2\bigl(\Gamma(t)\bigr): e_{\partial\Omega_i(t)}u\in H^{1/2}\bigl(\partial\Omega_i(t)\bigr)\}\quad\text{with}\\ 
	    \|u\|_{\Lambda(t)}=\|e_{\partial\Omega_i(t)}u\|_{H^{1/2}(\partial\Omega_i(t))}.
\end{gathered}
\end{equation*}
By~\cite[Lemma A.8]{widlund} one has the identification $\Lambda(t)\cong[H^{1/2}_0\bigl(\Gamma(t)\bigr), L^2\bigl(\Gamma(t)\bigr)]_{1/2}$, i.e., $\Lambda(t)$ is independent of $i=1,2$. The space $H^{1/2}\bigl(S(t)\bigr)$ is a separable Hilbert space, and the same therefore holds for $\Lambda(t)$. 

\begin{lemma}\label{lem:compatH1/2}
If \cref{ass:phi} holds then $\bigl(H^{1/2}(S), \psi\bigr)$ is a compatible pair. 
\end{lemma}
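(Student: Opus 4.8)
The plan is to reduce the compatibility of $\bigl(H^{1/2}(S),\psi\bigr)$ to the already-established compatibility of $\bigl(L^2(S),\psi\bigr)$ from \cref{lem:compatL2S}, by controlling the Slobodetskii seminorm under the change of variables $\psi_{-t}$. The first compatibility property (that $\psi_0$ is the identity) is automatic, so the work is in the norm bounds and the measurability.

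For the norm bound, take $u\in H^{1/2}\bigl(S(0)\bigr)$ and set $v=\psi_t u\in L^2\bigl(S(t)\bigr)$. The $L^2$-part of $\|v\|_{H^{1/2}(S(t))}$ is already controlled by $\|u\|_{L^2(S(0))}\leq\|u\|_{H^{1/2}(S(0))}$ via \cref{lem:compatL2S}. For the seminorm, I would apply the surface-integral change of variables formula~\cref{eq:intident} in both surface variables simultaneously: writing $y=\Phi_t(x)$, $z=\Phi_t(x')$, one has $\mathrm{d}s_t(y)\,\mathrm{d}s_t(z)=\omega_{d-1,t}(x)\,\omega_{d-1,t}(x')\,\mathrm{d}s_0(x)\,\mathrm{d}s_0(x')$, and $(\psi_t u)(y)-(\psi_t u)(z)=u(x)-u(x')$. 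The only nontrivial ingredient is then the comparison of the singular kernels: by the lower bi-Lipschitz bound in~\cref{eq:Phi_biLip}, $|\Phi_t(x)-\Phi_t(x')|\geq c|x-x'|$, so $|y-z|^{-d}\leq c^{-d}|x-x'|^{-d}$ uniformly in $t$. Combining this with the uniform $L^\infty$-bound on $\omega_{d-1,t}$ from~\cref{eq:omegabound} gives
\begin{equation*}
|\psi_t u|_{H^{1/2}(S(t))}^2\leq \frac{\bigl(\sup_{t}\|\omega_{d-1,t}\|_{L^\infty(S(0))}\bigr)^2}{c^{d}}\,|u|_{H^{1/2}(S(0))}^2,
\end{equation*}
with a constant independent of $t$. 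The reverse bound follows by swapping the roles of $S(0)$ and $S(t)$ and using that $\Phi_-$ enjoys the same properties as $\Phi$ (in particular the lower bi-Lipschitz bound for $\Phi_{-t}$ on $B_R$ and the corresponding uniform bound on the Jacobian factors $\omega_{d-1,-t}$).

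For the third property, measurability of $t\mapsto\|\psi_t u\|_{H^{1/2}(S(t))}$, I would argue as in \cref{lem:H1Compatibility,lem:compatL2S}: after the change of variables the squared norm becomes
\begin{equation*}
\|\psi_t u\|_{H^{1/2}(S(t))}^2=\int_{S(0)}u^2\,\omega_{d-1,t}\,\mathrm{d}s_0+\int_{S(0)}\int_{S(0)}\frac{|u(x)-u(x')|^2}{|\Phi_t(x)-\Phi_t(x')|^{d}}\,\omega_{d-1,t}(x)\,\omega_{d-1,t}(x')\,\mathrm{d}s_0\,\mathrm{d}s_0,
\end{equation*}
and each factor depends continuously on $t$ for fixed spatial variables (by \cref{ass:phi}, $\Phi\in C^1$ in $(t,x)$ and $\omega_{d-1,t}$ is built continuously from $D\Phi_t$), while the dominated convergence theorem — with dominating function $c^{-d}|u(x)-u(x')|^2|x-x'|^{-d}\cdot(\sup\omega)^2\in L^1(S(0)\times S(0))$ — upgrades pointwise-in-$t$ convergence to continuity, hence measurability, of the integral. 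Alternatively one invokes a Pettis/Fubini-type argument as in the earlier lemmas.

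The main obstacle I anticipate is purely bookkeeping around the surface measure: making sure the double change of variables in the nonlocal seminorm is rigorously justified on Lipschitz manifolds (covering by charts, partitions of unity, the factor $\omega_{d-1,t}$ as in the proof of \cref{lem:compatL2S}), and confirming that the lower bi-Lipschitz constant $c$ from~\cref{eq:Phi_biLip} — stated there for points in the ambient balls $B_r,B_R$ — indeed applies to pairs of points on the manifold $S(0)$ (it does, since $S(0)\subset B_r$). No genuinely new idea beyond~\cref{eq:Phi_biLip}, \cref{eq:omegabound}, and \cref{lem:compatL2S} should be needed; the fractional seminorm is tailor-made to be preserved, up to constants, by bi-Lipschitz maps with bounded Jacobian.
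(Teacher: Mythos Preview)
Your proposal is correct and follows essentially the same route as the paper: both arguments control the Slobodetskii seminorm via the double change of variables with the surface Jacobian factors $\omega_{n,t}$, invoke the bi-Lipschitz estimate~\cref{eq:Phi_biLip} to compare the kernels, and conclude measurability by dominated convergence with the same dominating integrand. The only cosmetic discrepancy is your indexing $\omega_{d-1,t}$ versus the paper's $\omega_{n,t}$ (the subscript there is the ambient dimension, not the manifold dimension).
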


\begin{proof}
The proof is not dissimilar to the discussion in \cite[\S 5.4.1]{AESIFB}. With the same notation as in~\cref{lem:compatL2S} we have
\begin{align}
     \nonumber 		|\psi_t u|_{H^{1/2}(S(t))}^2 &= \int_{S(t)}\int_{S(t)} \frac{|\psi_t u(y)-\psi_t u(\hat y)|^2}{|y-\hat y|^n}\,\mathrm{d}s_t\mathrm{d}s_t\\
     \nonumber 		             &= \int_{S(t)}\int_{S(t)} \frac{|u(\Phi_{-t} (y))- u(\Phi_{-t}(\hat y))|^2}{|y-\hat y|^n}\,\mathrm{d}s_t\mathrm{d}s_t\\
          	 &= \int_{S(0)}\int_{S(0)} \frac{|u(x)- u(\hat x)|^2}{|\Phi_t (x)- \Phi_t (\hat x)|^n}\omega_{n,t}(x)\omega_{n,t}(\hat x)\,\mathrm{d}s_0\mathrm{d}s_0\label{eq:to_refer_1}\\
     \nonumber 			     &\leq C\int_{S(0)}\int_{S(0)} \frac{|u(x)- u(\hat x)|^2}{|x- \hat x|^n}\omega_{n,t}(x)\omega_{n,t}(\hat x)\,\mathrm{d}s_0\mathrm{d}s_0\\
     \nonumber 			     &\leq C\int_{S(0)}\int_{S(0)} \frac{|u(x)- u(\hat x)|^2}{|x- \hat x|^n}\,\mathrm{d}s_0\mathrm{d}s_0,
\end{align}
where the constant~$C$ is uniform in time. The bounds follow as $\Phi_t$ and $\omega_{n}$ fulfill~\cref{eq:Phi_biLip,eq:omegabound}, respectively. Since the bound for $\|\psi_t u\|_{L^2(S(t))}$ follows as in~\cref{lem:compatL2S} we have $\psi_t \colon H^{1/2}\bigl(S(0)\bigr) \to H^{1/2}\bigl(S(t)\bigr)$ and 
\begin{equation*}
\|\psi_{t} u\|_{H^{1/2}(S(t))}\leq C \|u\|_{H^{1/2}(S(0))}.
\end{equation*}
A similar argument gives the same result for $\psi_{-t}$. 

Finally, we prove the third property of~\cref{def:def_compatibility}. By \cref{ass:phi} we have that $\Phi\colon\mathbb{R}^0_+ \to C\bigl(S(0)\bigr)$ and $\omega_{n} \colon \mathbb{R}_+\to L^\infty\bigl(S(0)\bigr)$ are continuous. Hence, the integrand in \eqref{eq:to_refer_1} is continuous in time for all fixed $x, \hat x$ excluding the zeros set where $\omega_{n,t}$ is undefined and $x = \hat x$. The set violating the latter condition is again of measure zero. The said integrand is bounded from above by 
\begin{equation*}
C\frac{|u(x)- u(\hat x)|^2}{|x- \hat x|^n}
\end{equation*}
via~\cref{eq:Phi_biLip,eq:omegabound}, and this function is integrable and independent of time. We may therefore apply Lebesgue's dominated convergence theorem to deduce that the right-hand side of \eqref{eq:to_refer_1} (and hence also $\|\psi_t u\|_{H^{1/2}(S(t))}$) is continuous with respect to $t$, thereby yielding the sought-after measurability.
\end{proof}

\begin{lemma}\label{lem:extensionOpCommutesWithDiffeo}
If \cref{ass:phi} holds then 
\begin{equation*}
e_{\partial\Omega_i(t)} \psi_{t} u = \psi_{t} e_{\partial\Omega_i(0)}u
\end{equation*}
for all $u \in L^2\bigl(\Gamma(0)\bigr)$.
\end{lemma}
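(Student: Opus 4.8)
The plan is to unwind both sides to pointwise formulas on $\partial\Omega_i(t)$ and check they agree almost everywhere, the only subtlety being the measure-theoretic bookkeeping. First I would record the purely geometric fact, already established in \cref{sec:movingInterface}, that $\Phi_{-t}$ restricts to a bi-Lipschitz bijection $\partial\Omega_i(t)\to\partial\Omega_i(0)$ which carries $\Gamma(t)$ onto $\Gamma(0)$; consequently it also carries $\partial\Omega_i(t)\setminus\Gamma(t)$ onto $\partial\Omega_i(0)\setminus\Gamma(0)$. Both sides of the claimed identity are genuine elements of $L^2\bigl(\partial\Omega_i(t)\bigr)$: on the left, $\psi_t u\in L^2\bigl(\Gamma(t)\bigr)$ by \cref{lem:compatL2S} (applied with $S=\Gamma$) and zero-extension to $\partial\Omega_i(t)$ preserves $L^2$; on the right, $e_{\partial\Omega_i(0)}u\in L^2\bigl(\partial\Omega_i(0)\bigr)$ and $\psi_t$ maps this into $L^2\bigl(\partial\Omega_i(t)\bigr)$ by \cref{lem:compatL2S} (applied with $S=\partial\Omega_i$).

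The core computation is pointwise. Fix $x\in\partial\Omega_i(t)$ and distinguish two cases. If $x\in\Gamma(t)$, then $\bigl(e_{\partial\Omega_i(t)}\psi_t u\bigr)(x)=(\psi_t u)(x)=u\bigl(\Phi_{-t}(x)\bigr)$, while $\Phi_{-t}(x)\in\Gamma(0)$, so $\bigl(\psi_t e_{\partial\Omega_i(0)}u\bigr)(x)=\bigl(e_{\partial\Omega_i(0)}u\bigr)\bigl(\Phi_{-t}(x)\bigr)=u\bigl(\Phi_{-t}(x)\bigr)$; the two coincide. If instead $x\in\partial\Omega_i(t)\setminus\Gamma(t)$, then the left-hand side is $0$ by definition of the zero-extension, and $\Phi_{-t}(x)\in\partial\Omega_i(0)\setminus\Gamma(0)$ by the geometric fact above, so the right-hand side is $\bigl(e_{\partial\Omega_i(0)}u\bigr)\bigl(\Phi_{-t}(x)\bigr)=0$ as well. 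Hence the two functions agree at every point of $\partial\Omega_i(t)$ at which the composition $u\circ\Phi_{-t}$ is unambiguously defined.

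The remaining point is to upgrade this to equality in $L^2\bigl(\partial\Omega_i(t)\bigr)$, i.e.\ to check that the exceptional set is $s_t$-null. The representative $u$ is only defined up to an $s_0$-null subset $N\subset\Gamma(0)$, and the formula $(\psi_t u)(x)=u(\Phi_{-t}(x))$ is meaningful for $x\notin\Phi_t(N)$. But $\Phi_{-t}$ is bi-Lipschitz with Jacobian-type weight $\omega_{n,t}$ bounded above and below (cf.\ \cref{eq:intident,eq:omegabound} and the bi-Lipschitz bounds \cref{eq:Phi_biLip}), so it maps $s_0$-null sets to $s_t$-null sets and conversely; thus $\Phi_t(N)$ is $s_t$-null and the pointwise identity above holds for a.e.\ $x\in\partial\Omega_i(t)$, which is exactly the assertion. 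I do not expect a genuine obstacle here — the argument is a direct diagram-chase plus the standard observation that bi-Lipschitz maps preserve surface-measure null sets — so the only care needed is to state the a.e.\ qualifier correctly and to invoke \cref{lem:compatL2S} for the well-definedness of the two occurrences of $\psi_t$ on the different manifolds $\Gamma$ and $\partial\Omega_i$.
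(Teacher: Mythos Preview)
Your proposal is correct and follows essentially the same approach as the paper: both arguments reduce to the pointwise case analysis on $\Gamma(t)$ versus $\partial\Omega_i(t)\setminus\Gamma(t)$, using that $\Phi_{-t}$ carries $\Gamma(t)$ to $\Gamma(0)$ and the complement to the complement. Your third paragraph on the preservation of null sets under bi-Lipschitz maps is more explicit than the paper, which simply writes ``for a.e.\ $x\in\partial\Omega_i(t)$'' and leaves that bookkeeping implicit.
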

\begin{proof}
By~\cref{lem:compatL2S}, one obtains that $e_{\partial\Omega_i(t)} \psi_{t} u$ and $\psi_{t} e_{\partial\Omega_i(0)}u$ are in $L^2\bigl(\partial\Omega_i(t)\bigr)$. 
We therefore have, for a.e.\ $x \in \partial \Omega_i(t)$,
    \begin{align*}
        (e_{\partial\Omega_i(t)} \psi_{t} u)(x) = \begin{cases}
            (\psi_{t}  u)(x) & \text{if $x \in \Gamma(t)$}\\
            0 & \text{if $x \in \partial\Omega_i(t)\backslash\Gamma(t)$},
        \end{cases}
    \end{align*}
    while on the other hand
    \begin{align*}
        (\psi_{t} e_{\partial\Omega_i(0)}u)(x) = (e_{\partial\Omega_i(0)}u)\bigl(\Phi_{-t}(x) \bigr)&= \begin{cases}
            u(\Phi_{-t}(x)) & \text{if $\Phi_{-t}(x) \in \Gamma(0)$}\\
            0 & \text{if $\Phi_{-t}(x) \in \partial\Omega_i(0)\backslash \Gamma(0)$}
        \end{cases}\\
        &= \begin{cases}
            (\psi_{t} u)(x) & \text{if $x \in \Gamma(t)$}\\
            0 & \text{if $x \in \partial\Omega_i(t)\backslash \Gamma(t)$}.
        \end{cases}
    \end{align*}
Here we used that $\Phi_t\bigl(\partial \Omega_i(0)\bigr)=\partial\Omega_i(t)$ and $\Phi_t\bigl(\Gamma(0)\bigr)=\Gamma(t)$.
\end{proof}

\begin{lemma}\label{lem:compatLambda}
If \cref{ass:phi} holds then $(\Lambda, \psi)$ is a compatible pair. 
\end{lemma}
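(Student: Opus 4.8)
The plan is to reduce everything to the already-established compatibility of $(H^{1/2}(S),\psi)$ from \cref{lem:compatH1/2}, applied with $S = \partial\Omega_i$, together with the commutation identity from \cref{lem:extensionOpCommutesWithDiffeo}. Recall that by definition $\|u\|_{\Lambda(t)} = \|e_{\partial\Omega_i(t)}u\|_{H^{1/2}(\partial\Omega_i(t))}$, so the norm on $\Lambda(t)$ is precisely the $H^{1/2}(\partial\Omega_i(t))$-norm transported through the zero-extension operator. First I would check the first compatibility property of \cref{def:def_compatibility}: $\psi_0$ is the identity, which is immediate since $\Phi_0 = \mathrm{id}$, and one must also confirm that $\psi_t$ genuinely maps $\Lambda(0)$ into $\Lambda(t)$ (and $\psi_{-t}$ back), which will fall out of the norm bound below.

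For the second property, take $u \in \Lambda(0)$. Using \cref{lem:extensionOpCommutesWithDiffeo} we have $e_{\partial\Omega_i(t)}\psi_t u = \psi_t e_{\partial\Omega_i(0)}u$, and hence
\begin{equation*}
\|\psi_t u\|_{\Lambda(t)} = \|e_{\partial\Omega_i(t)}\psi_t u\|_{H^{1/2}(\partial\Omega_i(t))} = \|\psi_t e_{\partial\Omega_i(0)}u\|_{H^{1/2}(\partial\Omega_i(t))} \leq C\|e_{\partial\Omega_i(0)}u\|_{H^{1/2}(\partial\Omega_i(0))} = C\|u\|_{\Lambda(0)},
\end{equation*}
where the inequality is the compatibility bound for $(H^{1/2}(\partial\Omega_i),\psi)$ from \cref{lem:compatH1/2}, with a constant $C$ independent of $t$; this simultaneously shows $\psi_t u \in \Lambda(t)$. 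The reverse bound $\|\psi_{-t}u\|_{\Lambda(0)} \leq C\|u\|_{\Lambda(t)}$ follows the same way, using the $\psi_{-t}$ half of \cref{lem:compatH1/2} and the analogue of \cref{lem:extensionOpCommutesWithDiffeo} with the roles of $t$ and $0$ interchanged (equivalently, applying the stated identity and composing with $\psi_{-t}$ on both sides).

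For the third property, the map $t \mapsto \|\psi_t u\|_{\Lambda(t)} = \|\psi_t e_{\partial\Omega_i(0)}u\|_{H^{1/2}(\partial\Omega_i(t))}$ is measurable because $e_{\partial\Omega_i(0)}u$ is a fixed element of $H^{1/2}(\partial\Omega_i(0))$ and $(H^{1/2}(\partial\Omega_i),\psi)$ is compatible, so measurability of $t \mapsto \|\psi_t v\|_{H^{1/2}(\partial\Omega_i(t))}$ for fixed $v$ is exactly the third property already verified in \cref{lem:compatH1/2}. I do not anticipate a genuine obstacle here: the only point requiring a little care is making sure the zero-extension operator is applied consistently — i.e. that the identity of \cref{lem:extensionOpCommutesWithDiffeo} is invoked with the correct index $i$ and that the resulting norms really are the $\Lambda$-norms — and checking that $\Lambda(t)$ is a separable Hilbert space (already noted in the text after the definition of $\Lambda(t)$), so that \cref{def:def_compatibility} applies. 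The whole argument is essentially a transport of \cref{lem:compatH1/2} along the isometric embedding $e_{\partial\Omega_i}$, so the heavy lifting was done in the previous two lemmas.
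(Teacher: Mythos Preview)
Your proposal is correct and follows essentially the same approach as the paper: reduce to the compatibility of $(H^{1/2}(\partial\Omega_i),\psi)$ via the commutation identity $e_{\partial\Omega_i(t)}\psi_t u = \psi_t e_{\partial\Omega_i(0)}u$, then read off the norm bound, the mapping property $\psi_t:\Lambda(0)\to\Lambda(t)$, and measurability directly from \cref{lem:compatH1/2}. The paper's proof is organized identically, with the only cosmetic difference being that it first isolates the mapping property (invoking the definition of $\Lambda(t)$ explicitly) before writing down the norm estimate.
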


\begin{proof}
Let $u\in\Lambda(0)$ then $e_{\partial\Omega_i(0)}u\in H^{1/2}(\partial\Omega_i(0))$. By~\cref{lem:compatH1/2,lem:extensionOpCommutesWithDiffeo}, one obtains that 
\begin{equation*}
\psi_{t} e_{\partial\Omega_i(0)} u=e_{\partial\Omega_i(t)} \psi_{t} u\in H^{1/2}(\partial\Omega_i(t)).
\end{equation*}
From the definition of $\Lambda(t)$ (cf.~\cite[Lemma~4.1]{EHEE22}), there is a unique element $v\in\Lambda(t)$ such that $e_{\partial\Omega_i(t)}v=e_{\partial\Omega_i(t)} \psi_{t} u$, i.e., $\psi_{t} u=v$. Hence, $\psi_t$ maps~$\Lambda(0)$ into~$\Lambda(t)$.
By the definition of $\norm{\cdot}{\Lambda(t)}$ together~\cref{lem:compatH1/2,lem:extensionOpCommutesWithDiffeo}, we have 
\begin{equation*}
\begin{aligned}
\norm{\psi_t u}{\Lambda(t)} &=\norm{e_{\partial\Omega_i(t)}\psi_t u}{H^{1/2}(\partial\Omega_i(t))}
                                              =\norm{\psi_te_{\partial\Omega_i(0)} u}{H^{1/2}(\partial\Omega_i(t))}\\
                                            &\leq C\norm{e_{\partial\Omega_i(0)} u}{H^{1/2}(\partial\Omega_i(0))}= C\norm{u}{\Lambda(0)}.
\end{aligned}
\end{equation*}
The same line of reasoning can be made for $\psi_-$ and the measurability of $t\mapsto\norm{\psi_t u}{\Lambda(t)}$ follows just as in~\cref{lem:compatH1/2}. Hence, $(\Lambda, \psi)$ is a compatible pair. 
\end{proof}

To tie together the functions over $M$ and $S$ we consider the linear, bounded, and surjective trace operator 
$T_{\partial\Omega_i(t)}: H^1\bigl(\Omega_i(t)\bigr)\rightarrow H^{1/2}\bigl(\partial\Omega_i(t)\bigr)$, see~\cite[Theorem 6.8.13]{kufner}, together with the space
\begin{equation*}
V_i(t)=\{u\in H^1\bigl(\Omega_i(t)\bigr): \restr{(T_{\partial\Omega_i(t)}u)}{\partial\Omega_i(t)\backslash\Gamma(t)}=0\}.
\end{equation*}
The spaces $V_i(t)$ and $H^1_0\bigl(\Omega_i(t)\bigr)$ are both equipped with the norm $\|\cdot\|_{H^1(\Omega_i(t))}$ and are separable Hilbert spaces. For future reference, we also introduce the trace operator on $V_i(t)$ by
\begin{equation*}
	     T_{i,t}\colon V_i(t)\rightarrow \Lambda(t), \qquad u\mapsto\restr{(T_{\partial\Omega_i(t)}u)}{\Gamma(t)},
\end{equation*}	     
which is again linear, bounded, and surjective, see~\cite[Lemma 4.4]{EHEE22}.
\begin{lemma}\label{lem:commutationOfphiAndTrace}
If \cref{ass:phi} holds then
\begin{equation*}
T_{\partial\Omega_i(t)}\phi_t u  = \psi_tT_{\partial\Omega_i(0)} u\quad\text{and}\quad T_{i,t}\phi_t v  = \psi_tT_{i,0} v
\end{equation*}
for all $u \in H^1\bigl(\Omega_i(0)\bigr)$ and $v \in V_i(0)$.
\end{lemma}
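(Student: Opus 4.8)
The plan is to reduce the identity $T_{\partial\Omega_i(t)}\phi_t u = \psi_t T_{\partial\Omega_i(0)} u$ to the analogous statement for smooth functions, where both sides are simply restrictions of continuous functions to the boundary, and then pass to the limit using the continuity/boundedness properties established in the preceding lemmas. Concretely, first I would take $u \in C^\infty(\overline{\Omega_i(0)})$ (or $u$ Lipschitz on $\overline{\Omega_i(0)}$); for such $u$ the trace operator $T_{\partial\Omega_i(0)}$ acts as the pointwise restriction $u|_{\partial\Omega_i(0)}$, and similarly $\phi_t u = u\circ\Phi_{-t}$ is continuous on $\overline{\Omega_i(t)}$ since $\Phi_{-t}$ is continuous (by~\cref{ass:phi}) with $\Phi_{-t}(\overline{\Omega_i(t)}) = \overline{\Omega_i(0)}$, so $T_{\partial\Omega_i(t)}\phi_t u$ is again a pointwise restriction. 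Then, for $x\in\partial\Omega_i(t)$,
\begin{equation*}
\bigl(T_{\partial\Omega_i(t)}\phi_t u\bigr)(x) = (u\circ\Phi_{-t})(x) = u\bigl(\Phi_{-t}(x)\bigr) = \bigl(T_{\partial\Omega_i(0)}u\bigr)\bigl(\Phi_{-t}(x)\bigr) = \bigl(\psi_t T_{\partial\Omega_i(0)}u\bigr)(x),
\end{equation*}
using $\Phi_t(\partial\Omega_i(0)) = \partial\Omega_i(t)$ so that $\Phi_{-t}$ maps $\partial\Omega_i(t)$ onto $\partial\Omega_i(0)$, together with the definition of $\psi_t$ as precomposition with $\Phi_{-t}|_{\partial\Omega_i(t)}$.

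Next I would pass to general $u \in H^1(\Omega_i(0))$ by density: smooth (or Lipschitz) functions are dense in $H^1(\Omega_i(0))$ since $\Omega_i(0)$ is Lipschitz, so pick $u_k \to u$ in $H^1(\Omega_i(0))$. On the left, $\phi_t\colon H^1(\Omega_i(0))\to H^1(\Omega_i(t))$ is bounded by~\cref{lem:H1Compatibility}, and $T_{\partial\Omega_i(t)}\colon H^1(\Omega_i(t))\to H^{1/2}(\partial\Omega_i(t))$ is bounded, so $T_{\partial\Omega_i(t)}\phi_t u_k \to T_{\partial\Omega_i(t)}\phi_t u$ in $H^{1/2}(\partial\Omega_i(t))$. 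On the right, $T_{\partial\Omega_i(0)} u_k \to T_{\partial\Omega_i(0)} u$ in $H^{1/2}(\partial\Omega_i(0))$, and $\psi_t\colon H^{1/2}(\partial\Omega_i(0))\to H^{1/2}(\partial\Omega_i(t))$ is bounded by~\cref{lem:compatH1/2}, so $\psi_t T_{\partial\Omega_i(0)} u_k \to \psi_t T_{\partial\Omega_i(0)} u$ in $H^{1/2}(\partial\Omega_i(t))$. Since the two sides agree for every $u_k$, they agree for $u$ in the limit. This establishes the first identity.

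For the second identity $T_{i,t}\phi_t v = \psi_t T_{i,0} v$ with $v \in V_i(0)$, I would first check that $\phi_t$ maps $V_i(0)$ into $V_i(t)$: given $v\in V_i(0)$, the first identity gives $T_{\partial\Omega_i(t)}\phi_t v = \psi_t T_{\partial\Omega_i(0)} v$, and since $T_{\partial\Omega_i(0)} v$ vanishes on $\partial\Omega_i(0)\setminus\Gamma(0)$ while $\psi_t$ is precomposition with $\Phi_{-t}$, which maps $\partial\Omega_i(t)\setminus\Gamma(t)$ onto $\partial\Omega_i(0)\setminus\Gamma(0)$ (using $\Phi_t(\Gamma(0))=\Gamma(t)$), the trace $T_{\partial\Omega_i(t)}\phi_t v$ vanishes on $\partial\Omega_i(t)\setminus\Gamma(t)$, i.e., $\phi_t v \in V_i(t)$. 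Then $T_{i,t}\phi_t v$ is by definition the restriction of $T_{\partial\Omega_i(t)}\phi_t v$ to $\Gamma(t)$, which by the first identity equals the restriction of $\psi_t T_{\partial\Omega_i(0)} v$ to $\Gamma(t)$; and this restriction is exactly $\psi_t$ applied to the restriction of $T_{\partial\Omega_i(0)} v$ to $\Gamma(0)$, i.e., $\psi_t T_{i,0} v$ — here one uses $\Phi_{-t}(\Gamma(t))=\Gamma(0)$ once more so that restricting to $\Gamma(t)$ commutes with precomposition by $\Phi_{-t}$. (Strictly, $\psi_t$ on the left acts on functions on $\partial\Omega_i(0)$ and on the right on functions on $\Gamma(0)$; the compatibility of these two actions is exactly the commutation with restriction, analogous to~\cref{lem:extensionOpCommutesWithDiffeo}.)

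The main obstacle is the density step and making sure the pointwise identity on smooth functions genuinely transfers: one must be careful that "trace equals pointwise restriction" is valid for the dense class on a merely Lipschitz domain, and that the set $\partial\Omega_i(t)\setminus\Gamma(t)$ is mapped correctly under $\Phi_{-t}$ — but both follow from standard trace theory on Lipschitz domains (\cite[Theorem~6.8.13]{kufner}) and the elementary set identities $\Phi_t(\partial\Omega_i(0))=\partial\Omega_i(t)$, $\Phi_t(\Gamma(0))=\Gamma(t)$ recorded in~\cref{sec:movingInterface}. No genuinely new estimate is needed beyond what~\cref{lem:H1Compatibility,lem:compatH1/2} already provide.
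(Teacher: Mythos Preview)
Your proposal is correct and matches the paper's proof for the first identity: verify the equality pointwise on $C^\infty(\overline{\Omega_i(0)})$ using that the trace is the classical restriction there, then pass to the limit via density and the boundedness of $\phi_t$ and $\psi_t$ from \cref{lem:H1Compatibility,lem:compatH1/2}. For the second identity there is a minor difference worth noting: the paper reruns the density argument with the smaller dense class $\{\varphi\in C^\infty(\overline{\Omega_i(0)}):\varphi|_{\partial\Omega_i(0)\setminus\Gamma(0)}=0\}\subset V_i(0)$ and invokes \cref{lem:compatLambda} for continuity of $\psi_t$ on $\Lambda$, whereas you deduce it directly by restricting the first identity to $\Gamma(t)$ and using that restriction to $\Gamma$ commutes with precomposition by $\Phi_{-t}$ (since $\Phi_{-t}(\Gamma(t))=\Gamma(0)$). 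Your route is slightly more economical and avoids a second density step; the paper's route has the advantage of giving the equality directly in $\Lambda(t)$ rather than first in $L^2(\Gamma(t))$. Both are valid and the difference is cosmetic.
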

\begin{proof}
For $\varphi \in C^\infty(\overline{\Omega_i(0)})$, we have
\begin{equation*}
T_{\partial\Omega_i(t)}(\phi_t\varphi)  = T_{\partial\Omega_i(t)}\bigl(\varphi\circ\Phi_{-t})  = (\varphi \circ \Phi_{-t})|_{\partial\Omega_i(t)},
\end{equation*}
with the second equality because $\varphi \circ \Phi_{-t} \in C(\overline{\Omega_i(0)})$ due to~\cref{eq:Phi_on_B}. We also have
\begin{equation*}
\psi_t(T_{\partial\Omega_i(0)}\varphi) = \psi_t(\varphi|_{\partial\Omega_i(0)}) = \varphi|_{\partial\Omega_i(0)} \circ (\Phi_{-t}|_{\partial\Omega_i(t)}) = (\varphi \circ \Phi_{-t})|_{\partial\Omega_i(t)},
\end{equation*}
with the final equality because $\Phi_t$ maps $\partial\Omega_i(0)$ to $\partial\Omega_i(t)$.

For an arbitrary  $u \in H^1\bigl(\Omega_i(0)\bigr)$ take a sequence $\{u_n\} \subset C^\infty(\overline{\partial\Omega_i(0)})$ converging to $u$. One then has the equality
\begin{equation*}
T_{\partial\Omega_i(t)}(\phi_t u_n)  = \psi_t(T_{\partial\Omega_i(0)} u_n).
\end{equation*}
The trace operators $T_{\partial\Omega_i(s)}: H^1\bigl(\Omega_i(s)\bigr) \to H^{1/2}\bigl(\partial\Omega_i(s)\bigr)$, $s=0,t$, are continuous and, by~\cref{lem:H1Compatibility,lem:compatH1/2}, the same holds for $\phi_t:H^1\bigl(\Omega_i(0)\bigr) \to H^1\bigl(\Omega_i(t)\bigr)$ and $\psi_t:H^{1/2}\bigl(\partial\Omega_i(0)\bigr) \to H^{1/2}\bigl(\partial\Omega_i(t)\bigr)$. Hence, we obtain that the lemma's first equality holds in $H^{1/2}\bigl(\partial\Omega_i(t)\bigr)$. The second equality follows by~\cref{lem:compatLambda} and the same line of argumentation. The only difference is that the restrictions are made to $\Gamma$ instead of $\partial\Omega_i$, and $\{u_n\} \subset \{\varphi\in C^\infty(\overline{\partial\Omega_i(0)}):\restr{\varphi}{\partial\Omega_i(0)\backslash\Gamma(0)}=0\}$.
\end{proof}

\begin{lemma}
If \cref{ass:phi} holds then $\bigl(H^1_0(\Omega), \phi\bigr)$, $\bigl(H^1_0(\Omega_i), \phi\bigr)$, and $(V_i, \phi)$ are all compatible pairs. 
\end{lemma}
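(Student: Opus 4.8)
The plan is to bootstrap off of two facts already in hand: the compatibility of $(H^1(M),\phi)$ from \cref{lem:H1Compatibility}, valid for an arbitrary family of Lipschitz domains $M$ (so in particular for $M=\Omega$ and $M=\Omega_i$), and the trace commutation identities of \cref{lem:commutationOfphiAndTrace}. The observation that makes this work is that each of $H^1_0(\Omega)$, $H^1_0(\Omega_i)$, and $V_i$ is a closed subspace of the corresponding $H^1$-space and carries the \emph{same} norm $\|\cdot\|_{H^1}$. Hence, once we know that $\phi_t$ (and $\phi_{-t}$) maps the subspace at time $0$ into the subspace at time $t$ (resp.\ $0$), the two norm bounds of \cref{def:def_compatibility}(ii) and the measurability in \cref{def:def_compatibility}(iii) are inherited \emph{verbatim} from \cref{lem:H1Compatibility}, and \cref{def:def_compatibility}(i) is trivial since $\Phi_0=\mathrm{id}$ forces $\phi_0=\mathrm{id}$. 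So the only real content is invariance of the boundary conditions under $\phi$.

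For the $H^1_0$-spaces I would use that on a Lipschitz domain $H^1_0$ is precisely the kernel of the trace operator: if $u\in H^1_0(\Omega_i(0))$ then $T_{\partial\Omega_i(0)}u=0$, so \cref{lem:commutationOfphiAndTrace} gives $T_{\partial\Omega_i(t)}\phi_t u=\psi_t T_{\partial\Omega_i(0)}u=0$, i.e.\ $\phi_t u\in H^1_0(\Omega_i(t))$; replacing $\partial\Omega_i$ by $\partial\Omega$ throughout handles $\bigl(H^1_0(\Omega),\phi\bigr)$. For $V_i$ I would take $u\in V_i(0)$, so $T_{\partial\Omega_i(0)}u$ vanishes on $\partial\Omega_i(0)\setminus\Gamma(0)$, and again apply \cref{lem:commutationOfphiAndTrace} to get $T_{\partial\Omega_i(t)}\phi_t u=\psi_t T_{\partial\Omega_i(0)}u=\bigl(T_{\partial\Omega_i(0)}u\bigr)\circ\bigl(\Phi_{-t}|_{\partial\Omega_i(t)}\bigr)$. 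Since $\Phi_t$ is a homeomorphism of $\partial\Omega_i(0)$ onto $\partial\Omega_i(t)$ carrying $\Gamma(0)$ onto $\Gamma(t)$ (cf.\ \cref{sec:movingInterface}), it maps $\partial\Omega_i(0)\setminus\Gamma(0)$ onto $\partial\Omega_i(t)\setminus\Gamma(t)$; thus for a.e.\ $x\in\partial\Omega_i(t)\setminus\Gamma(t)$ one has $\Phi_{-t}(x)\in\partial\Omega_i(0)\setminus\Gamma(0)$, hence $\bigl(T_{\partial\Omega_i(t)}\phi_t u\bigr)(x)=\bigl(T_{\partial\Omega_i(0)}u\bigr)\bigl(\Phi_{-t}(x)\bigr)=0$, so $\phi_t u\in V_i(t)$. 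The reverse inclusions, needed for the $\phi_{-t}$ bound in \cref{def:def_compatibility}(ii), follow identically upon exchanging the roles of the times $0$ and $t$, the hypotheses on $\Phi$ and $\Phi_-$ in \cref{ass:phi} being symmetric.

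Once subspace invariance is in place, I would conclude: \cref{def:def_compatibility}(ii) holds because $\|\phi_{\pm t}u\|_{H^1}\le C\|u\|_{H^1}$ with $C$ independent of $t$ by \cref{lem:H1Compatibility}, and \cref{def:def_compatibility}(iii) holds because $t\mapsto\|\phi_t u\|_{H^1(\cdot(t))}$ is measurable by the same lemma; the three spaces at each time are separable Hilbert spaces, as already noted. I do not anticipate any genuine obstacle here; the single point that requires a moment's care is the geometric observation for $V_i$, namely that the diffeomorphism $\Phi_t$ respects the partition of $\partial\Omega_i$ into its $\Gamma$-part and its complement, so that the homogeneous Dirichlet condition imposed on $\partial\Omega_i\setminus\Gamma$ is transported to the correct set.
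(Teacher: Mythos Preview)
Your proposal is correct and follows essentially the same approach as the paper: reduce to \cref{lem:H1Compatibility} for the norm bounds and measurability, and use the trace commutation of \cref{lem:commutationOfphiAndTrace} together with the fact that $\Phi_t$ carries $\partial\Omega_i(0)\setminus\Gamma(0)$ onto $\partial\Omega_i(t)\setminus\Gamma(t)$ to show subspace invariance. The only cosmetic difference is that for the $V_i$ case the paper passes through a dense subset of smooth functions vanishing on $\partial\Omega_i(0)\setminus\Gamma(0)$ before taking a limit, whereas you apply \cref{lem:commutationOfphiAndTrace} directly to $u\in V_i(0)$ and argue pointwise a.e.; since that lemma already holds for all of $H^1(\Omega_i(0))$ and $\Phi_{-t}$ is bi-Lipschitz (hence preserves null sets), your more direct route is perfectly valid.
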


\begin{proof}
We prove the $V_i$ case as the others follow directly by combining~\cref{lem:H1Compatibility,lem:commutationOfphiAndTrace}. For $u\in V_i(0)\subset H^1\bigl(\Omega_i(0)\bigr)$ we have $\phi_t u\in H^1\bigl(\Omega_i(t)\bigr)$ and $\restr{(T_{\partial\Omega_i(0)} u_n)}{\partial\Omega_i(0)\backslash\Gamma(0)}=0$. As $\{\varphi\in C^\infty(\overline{\partial\Omega_i(0)}):\restr{\varphi}{\partial\Omega_i(0)\backslash\Gamma(0)}=0\}$ is dense in~$V_i(0)$, we can choose a sequence $\{u_n\}$ in this dense subset such that it converges to~$u$. \cref{lem:commutationOfphiAndTrace} then implies that 
\begin{equation}\label{eq:compatVi}
\begin{aligned}
\restr{(T_{\partial\Omega_i(t)}\phi_t u_n\bigr)}{\partial\Omega_i(t)\backslash\Gamma(t)} & = \restr{(\psi_tT_{\partial\Omega_i(0)} u_n)}{\partial\Omega_i(t)\backslash\Gamma(t)}\\				          &= (\restr{u_n}{\partial\Omega_i(0)})\circ(\restr{\Phi_{-t}}{\partial\Omega_i(t)\backslash\Gamma(t)})=\restr{u_n}{{\partial\Omega_i(0)\backslash\Gamma(0)}}=0.	
\end{aligned}		                                                            
\end{equation}
Here, the second-to-last equality follows as $\Phi_t$ maps $\partial\Omega_i(0)\backslash\Gamma(0)$ to $\partial\Omega_i(t)\backslash\Gamma(t)$. The map $T_{\partial\Omega_i(t)}\phi_t:H^1\bigl(\Omega_i(0)\bigr) \to H^{1/2}\bigl(\partial\Omega_i(t)\bigr)$ is continuous by~\cref{lem:H1Compatibility}, and
taking the limit in~\cref{eq:compatVi} yields that $\restr{(T_{\partial\Omega_i(t)}\phi_t u\bigr)}{\partial\Omega_i(t)\backslash\Gamma(t)} =0$ in $L^2\bigl(\partial\Omega_i(t)\backslash\Gamma(t)\bigr)$. In conclusion, $\phi$ maps $V_i(0)$ into $V_i(t)$, and the same reasoning can be made for~$\phi_-$. The bounds and measurability stated in~\cref{def:def_compatibility} follow as for $H^1\bigl(\Omega_i(t)\bigr)$ in~\cref{lem:H1Compatibility}. Thus, $(V_i, \phi)$ is a compatible pair.
\end{proof}

Due to the results derived in~\cref{sec:abs,sec:comp}, the maps $(\phi,\psi)$ are isomorphisms with equivalent norms on all the Sobolev--Bochner spaces and their intersections appearing in the rest of the paper, with the one exception of the space $H_0^1\bigr(\mathbb{R}_+;H^{-1}(\Omega_i(0))\bigr)$.
 
\section{Weak formulation and existence of solutions}\label{sec:weak}

We now address existence of weak solutions for \cref{eq:strong}. First, we set the stage. Let $M$ once more play the role of $\Omega_i,\Omega$, where $M(0)\subset B_r$ and $M(t)\subset B_R$. Furthermore,~$g$ will either denote~$f$, the right-hand side of~\cref{eq:strong}, or its restriction $f_i$ to~$\Omega_i$. Throughout this section $X$ will denote 
$H^1_0(\Omega)$ or $H^1_0(\Omega_i)$, and 
\begin{equation*}
U=H^1_{L^2(M)}(\mathbb{R}_+)\cap L^2_{X}(\mathbb{R}_+).
\end{equation*}
\begin{definition}
    The weak form of~\cref{eq:strong} and its counterpart on $\Omega_i$ (which all have homogeneous initial conditions) can be formulated as
finding $u\in L^2_{X}(\mathbb{R}_+)$ such that 
\begin{equation}\label{eq:weak}
a(u,v)=d(\phi_-u,\phi_-v)+c(u,v)=\langle g,v\rangle\quad\text{for all }v\in U,
\end{equation}
where
\begin{equation*}
d(u,v)=-\int_{\mathbb{R}_+}\int_{M(0)}u\,\partial_tv\,|J_t|\,\mathrm{d} x_0\mathrm{d}t,
\quad\text{and}\quad
c(u,v)=\int_{\mathbb{R}_+}\int_{M(t)}\alpha\nabla u\cdot\nabla v+\beta uv\,\mathrm{d}x_t\mathrm{d}t.
\end{equation*}
\end{definition}
The weak form can be derived as follows. First, note that we formally have the identity
\begin{equation}\label{eq:dtM0}
\frac{\mathrm{d}}{\mathrm{d}t} u\bigl(t,\Phi_t(x)\bigr)=(\partial_t u+\nabla u\cdot\mathbf{w})\bigl(t,\Phi_t(x)\bigr)
\quad\Leftrightarrow\quad\frac{\mathrm{d}}{\mathrm{d}t}\phi_{-t}u = \phi_{-t}\dot{u}
\end{equation}
and Jacobi's formula
\begin{equation}\label{eq:diffOfJ}
\frac{d}{dt}J_t(x)= \nabla \cdot \mathbf{w}\bigl(t,\Phi_t(x)\bigr) J_t(x) 
\quad\Leftrightarrow\quad \frac{\mathrm{d}}{\mathrm{d}t}J_t = \phi_{-t}(\nabla \cdot \mathbf{w})J_t.
\end{equation}
Observe that if Jacobi's formula holds then it implies that $\mathrm{d}|J_t|/\mathrm{d}t = \phi_{-t}(\nabla \cdot \mathbf{w})|J_t|$. 
Consider sufficiently regular functions $u,v$ such that $\phi_{-t}\bigl(u(t)v(t)\bigr)$ decays sufficiently rapidly as $t$ tends to infinity and $\restr{v(t)}{S(t)}=0$. 
    Integration by parts in time together with~\cref{eq:dtM0,eq:diffOfJ} then gives 
\begin{equation*}
\begin{aligned}
&\int_{\mathbb{R}_+}\int_{M(t)}\dot{u}v\,\mathrm{d}x_t\mathrm{d}t
=\int_{\mathbb{R}_+}\int_{M(0)}\phi_{-t}\dot{u}\,\phi_{-t}v|J_t| \,\mathrm{d}x_0\mathrm{d}t\\
&\quad=\int_{\mathbb{R}_+}\int_{M(0)}\frac{\mathrm{d}}{\mathrm{d}t}(\phi_{-t}u)\,\phi_{-t}v|J_t| \,\mathrm{d}x_0\mathrm{d}t\\
&\quad=-\int_{\mathbb{R}_+}\int_{M(0)}\phi_{-t}u\,\frac{\mathrm{d}}{\mathrm{d}t}(\phi_{-t}v|J_t|)\,\mathrm{d} x_0\mathrm{d}t\\
&\qquad+\lim_{\tau\to\infty}\int_{M(0)}\restr{\phi_-(uv)}{t=\tau}|J_\tau|\,\mathrm{d} x_0-\int_{M(0)}\restr{(uv)}{t=0}|J_0|\,\mathrm{d} x_0\\
&\quad=-\int_{\mathbb{R}_+}\int_{M(0)}\phi_{-t}u\,\bigl(\frac{\mathrm{d}}{\mathrm{d}t}(\phi_{-t}v)|J_t|+\phi_{-t}(\nabla \cdot \mathbf{w})|J_t|\bigr)\,\mathrm{d} x_0\mathrm{d}t\\
&\qquad-\int_{M(0)}\restr{(uv)}{t=0}\,\mathrm{d} x_0\\
&\quad=-\int_{\mathbb{R}_+}\int_{M(0)}\phi_{-t}u\,\frac{\mathrm{d}}{\mathrm{d}t}(\phi_{-t}v)|J_t|\,\mathrm{d} x_0\mathrm{d}t
-\int_{\mathbb{R}_+}\int_{M(t)}(\nabla \cdot \mathbf{w})uv\,\mathrm{d} x_t\mathrm{d}t\\
&\qquad-\int_{M(0)}\restr{(uv)}{t=0}\,\mathrm{d} x_0.
\end{aligned}
\end{equation*}
The above equality and integration by parts in space yields 
\begin{align}\label{eq:initial}
\nonumber&\int_{\mathbb{R}_+}\int_{M(t)}\bigl(\dot{u}-\nabla\cdot(\alpha\nabla u)+(\nabla \cdot \mathbf{w}+\beta)u\bigr)v\,\mathrm{d}x_t\mathrm{d}t\\
\nonumber&\quad=-\int_{\mathbb{R}_+}\int_{M(0)}\phi_{-t}u\,\frac{\mathrm{d}}{\mathrm{d}t}(\phi_{-t}v)|J_t|\,\mathrm{d} x_0\mathrm{d}t
-\int_{\mathbb{R}_+}\int_{M(t)}(\nabla \cdot \mathbf{w})uv\,\mathrm{d} x_t\mathrm{d}t\\
\nonumber&\qquad-\int_{M(0)}\restr{(uv)}{t=0}\,\mathrm{d} x_0\\
\nonumber&\qquad+\int_{\mathbb{R}_+}\int_{M(t)}\alpha\nabla u\cdot\nabla v+(\nabla \cdot \mathbf{w}+\beta)uv\,\mathrm{d}x_t\mathrm{d}t-\int_{\mathbb{R}_+}\int_{\partial M(t)} \alpha \nabla u\cdot n \,v\,\mathrm{d}s_t\mathrm{d}t\\
\nonumber&\quad=-\int_{\mathbb{R}_+}\int_{M(0)}\phi_{-t}u\,\frac{\mathrm{d}}{\mathrm{d}t}(\phi_{-t}v)|J_t|\,\mathrm{d} x_0\mathrm{d}t
+\int_{\mathbb{R}_+}\int_{M(t)}\alpha\nabla u\cdot\nabla v+\beta uv\,\mathrm{d}x_t\mathrm{d}t\\
&\quad-\int_{M(0)}\restr{(uv)}{t=0}\,\mathrm{d} x_0,
\end{align}
which justifies the definition.

Now, in order to have a well defined weak problem, we assume the following.
\begin{assumption}\label{ass:eqdata}
The problem data $(\alpha,\beta,\mathbf{w},f)$ in~\cref{eq:strong} fulfills the properties
\begin{enumerate}[label=({\roman*})]
\item $\alpha,\beta \in L^{\infty}\bigl(\mathbb{R}_+;L^\infty(B_R)\bigr)$, and $\mathbf{w}\in L^\infty\bigl(\mathbb{R}_+; W^{1,\infty}(B_R,\mathbb{R}^n)\bigr)$;
\item $\alpha(t,x)\geq c>0$ a.e.\ $(t,x)\in \mathbb{R}_+\times B_R$;
\item there exists a constant $c>0$ such that
\begin{equation*}
1/2\,\nabla \cdot \mathbf{w}(t,x)+\beta(t,x) \geq c,
\end{equation*}
for a.e.\ $(t,x)\in \mathbb{R}_+\times B_R$;
\item $f\in L^2_{H^{-1}(\Omega)}(\mathbb{R}_+)$ and there exist $f_i\in L^2_{V_i^*}(\mathbb{R}_+)$, $i=1, 2$ such that
\begin{equation*}
    \langle f, v\rangle=\bigl\langle f_1, \phi\bigl(\restr{(\phi_-v)}{\R_+\times\Omega_1(0)}\bigr)\bigr\rangle +\bigl\langle f_2, \phi\bigl(\restr{(\phi_-v)}{\R_+\times\Omega_2(0)}\bigr)\bigr\rangle
\end{equation*}
for all $v\in L^2_{H^1_0(\Omega)}(\R_+)$.
\end{enumerate}
\end{assumption}
Note that if~\cref{ass:phi,ass:eqdata} hold then the right-hand side of~\cref{eq:diffOfJ} is an element in $L^\infty\bigl(\mathbb{R}_+;L^\infty(B_r)\bigr)$, i.e., $J,|J|,1/|J|\in W^{1,\infty}\bigl(\mathbb{R}_+; L^\infty(B_r)\bigr)$. To avoid a few technicalities we also assume the following. 
\begin{assumption}\label{ass:tech}
The map $|J|$ is an element in $L^\infty\bigl(\mathbb{R}_+; W^{1,\infty}(B_r)\bigr)$.
\end{assumption}
This assumption also yields that $1/|J|\in L^\infty\bigl(\mathbb{R}_+; W^{1,\infty}(B_r)\bigr)$. Next, we introduce
\begin{equation}\label{eq:test}
\begin{aligned}
&\mathcal{D}=\bigl\{u\in L^2_{L^2(M)}(\mathbb{R}_+):\phi_-u=\restr{v}{\mathbb{R}_+\times M(0)}\text{ with }v\in C^{\infty}_0\bigl(\mathbb{R}\times B_r\bigr)\big\}\quad\text{and}\\ 
&\mathcal{D}_0=\bigl\{u\in L^2_{L^2(M)}(\mathbb{R}_+):\phi_-u=\restr{v}{\mathbb{R}_+\times M(0)}\text{ with }v\in C^{\infty}_0\bigl(\mathbb{R}\times M(0)\bigr)\big\}.
\end{aligned}
\end{equation}
\begin{lemma}\label{lem:density1}
If~\cref{ass:phi} holds, then $\mathcal{D}$ and $\mathcal{D}_0$ are dense in $H^1_{L^2(M)}(\mathbb{R}_+)$ and
$L^2_X(\mathbb{R}_+)$, respectively. 
\end{lemma}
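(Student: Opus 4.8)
The plan is to transport the density questions through the isometric isomorphisms $\phi$ established in~\cref{sec:abs,sec:comp} to equivalent density statements on the fixed domain $M(0)$, and then to invoke classical density results for Sobolev--Bochner spaces on the cylinder $\mathbb{R}_+\times M(0)$. Concretely, by the definitions in~\cref{eq:test}, a function $u$ lies in $\mathcal{D}$ (resp.\ $\mathcal{D}_0$) precisely when $\phi_- u$ is the restriction to $\mathbb{R}_+\times M(0)$ of some $v\in C^\infty_0(\mathbb{R}\times B_r)$ (resp.\ $v\in C^\infty_0(\mathbb{R}\times M(0))$). Since $\phi\colon H^1\bigl(\mathbb{R}_+;L^2(M(0))\bigr)\to H^1_{L^2(M)}(\mathbb{R}_+)$ and $\phi\colon L^2\bigl(\mathbb{R}_+;X(0)\bigr)\to L^2_X(\mathbb{R}_+)$ are isometric isomorphisms (cf.~\cref{lem:H1Compatibility} and the compatibility of $(V_i,\phi)$, $(H^1_0(\Omega),\phi)$), it suffices to show:
\begin{enumerate}[label=(\roman*)]
\item the set $\{\restr{v}{\mathbb{R}_+\times M(0)}:v\in C^\infty_0(\mathbb{R}\times B_r)\}$ is dense in $H^1\bigl(\mathbb{R}_+;L^2(M(0))\bigr)$;
\item the set $\{\restr{v}{\mathbb{R}_+\times M(0)}:v\in C^\infty_0(\mathbb{R}\times M(0))\}$ is dense in $L^2\bigl(\mathbb{R}_+;X(0)\bigr)$, where $X(0)=H^1_0(\Omega(0))$ or $H^1_0(\Omega_i(0))$.
\end{enumerate}

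For~(ii) I would argue as follows. The space $C^\infty_0(\mathbb{R}\times M(0))$ restricted to $\mathbb{R}_+\times M(0)$ contains the algebraic tensor product $C^\infty_0(\mathbb{R}_+)\otimes C^\infty_0(M(0))$. Since $C^\infty_0(M(0))$ is dense in $H^1_0(M(0))$ (this is the definition of $H^1_0$) and $C^\infty_0(\mathbb{R}_+)$ is dense in $L^2(\mathbb{R}_+)$, a standard argument — approximate a general $w\in L^2(\mathbb{R}_+;H^1_0(M(0)))$ first by a simple function $\sum_k \mathbf{1}_{E_k}\zeta_k$ with $\zeta_k\in H^1_0(M(0))$, then mollify the characteristic functions in time and approximate each $\zeta_k$ by $C^\infty_0(M(0))$ functions — shows that $C^\infty_0(\mathbb{R}_+)\otimes C^\infty_0(M(0))$ is dense in $L^2(\mathbb{R}_+;H^1_0(M(0)))$. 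When $M(0)=\Omega_i(0)$ is only a subdomain of $\Omega(0)$, the relevant space is still $H^1_0(\Omega_i(0))$ (not $V_i(0)$), so the same reasoning applies verbatim. This gives~(ii) and hence density of $\mathcal{D}_0$ in $L^2_X(\mathbb{R}_+)$.

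For~(i), the key point is that $H^1\bigl(\mathbb{R}_+;L^2(M(0))\bigr)$ involves no boundary condition in the spatial variable, so one may freely use cut-off functions supported in $\mathbb{R}\times B_r$ that do not vanish near $\partial M(0)$. I would first show that $\{w\in C^\infty(\overline{\mathbb{R}_+};L^2(M(0))) : \supp w \text{ compact in } \overline{\mathbb{R}_+}\}$ is dense in $H^1(\mathbb{R}_+;L^2(M(0)))$ by a standard extension-mollification-truncation argument (extend across $t=0$ by reflection to get a function in $H^1(\mathbb{R};L^2(M(0)))$, mollify in time, and multiply by time cut-offs), and then approximate the Hilbert-space-valued smooth functions by elements of $C^\infty_0(\mathbb{R}\times B_r)$ restricted to $\mathbb{R}_+\times M(0)$: since $C^\infty_0(B_r)$ is dense in $L^2(M(0))$ (as $M(0)\subset B_r$ and smooth compactly supported functions are dense in $L^2$ of any bounded domain), one approximates the spatial values, again via the tensor-product density argument, now in the $H^1(\mathbb{R}_+;L^2(M(0)))$ norm.

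The main obstacle — and the reason the two cases $\mathcal{D}$ and $\mathcal{D}_0$ are separated — is handling the interaction between the temporal regularity (order one in $\mathcal{D}$, governing $H^1_{L^2(M)}$) and the spatial boundary behaviour. For $\mathcal{D}_0$ there is no temporal derivative to control (only $L^2$ in time), which is why ordinary tensor-product density suffices; for $\mathcal{D}$ one must be careful that the time-mollification and time-truncation steps commute with taking traces at $t=0$ and do not destroy the $H^1$-in-time regularity — this is routine but is the step I would write out most carefully. One should also note that the equivalence of norms (rather than isometry) provided by the $\phi$-isomorphism in the intersection topology is irrelevant here since density is a topological, not metric, property, so the $\phi$-transport is harmless.
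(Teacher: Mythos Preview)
Your proposal is correct and follows essentially the same strategy as the paper: transport the problem through $\phi$ to the fixed cylinder $\mathbb{R}_+\times M(0)$ and then establish density there via a tensor-product argument. The paper's execution is somewhat more compact: rather than spelling out the extension--mollification--truncation steps for~(i) and the simple-function approximation for~(ii), it directly invokes the abstract tensor identification $H^1\bigl(\mathbb{R}_+;L^2(M(0))\bigr)\cong\overline{H^1(\mathbb{R}_+)\otimes L^2(M(0))}$ (and analogously for $L^2\bigl(\mathbb{R}_+;X(0)\bigr)$), together with the density of $\{\restr{v}{\mathbb{R}_+}:v\in C^\infty_0(\mathbb{R})\}$ in $H^1(\mathbb{R}_+)$ and of $C^\infty_0(M(0))$ in $L^2(M(0))$ and $X(0)$, so that density of the algebraic tensor product follows immediately---your hands-on argument effectively reproves this tensor fact.
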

\begin{proof}
By~\cref{ass:phi}, we have that $\phi\colon H^1\bigl(\mathbb{R}_+;L^2(M(0))\bigr)\to H^1_{L^2(M)}(\mathbb{R}_+)$ is an isomorphism with an equivalence of norms, and it is therefore sufficient to prove that $\phi_-(\mathcal{D})$ is dense in $H^1\bigl(\mathbb{R}_+;L^2(M(0))\bigr)$. To this end, observe that $C^\infty_1=\bigl\{u:\phi_-u=\restr{v}{\mathbb{R}_+}\text{ with }v\in C^{\infty}_0\bigl(\mathbb{R}\bigr)\big\}$
is dense in $H^1(\mathbb{R}_+)$ and $C^\infty_2=C^\infty_0\bigl(M(0)\bigr)$ is dense in $L^2\bigl(M(0)\bigr)$.  We then have that the
algebraic tensor space $C^\infty_1\otimes C^\infty_2\subset \phi_-(\mathcal{D})$ is dense in the completion of $H^1(\mathbb{R}_+)\otimes L^2\bigl(M(0)\bigr)$. The latter space is isomorphic with an equivalence of norms to $H^1\bigl(\mathbb{R}_+;L^2(M(0))\bigr)$, see~\cite[Section~4]{EEEH24} for details. Hence, $\mathcal{D}$ is dense in $H^1_{L^2(M)}(\mathbb{R}_+)$. The density of $D_0$ in $L^2_X(\mathbb{R}_+)$ follows by the same argument.
\end{proof}
\begin{lemma}\label{lem:dmon}
If~\cref{ass:phi,ass:eqdata} hold then 
\begin{equation*}
d(\phi_{-}u,\phi_{-}u)\geq \int_{\mathbb{R}_+}\int_{M(t)}(1/2\nabla \cdot \mathbf{w})u^2\, \mathrm{d}x_t\mathrm{d}t\quad\text{for all }u\in H^1_{L^2(M)}(\mathbb{R}_+).
\end{equation*}
\end{lemma}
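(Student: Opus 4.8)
The plan is to reduce, via the density statement in~\cref{lem:density1}, to smooth representatives that are compactly supported in time, to integrate by parts in the time variable for such representatives, and then to transport the result back onto $M(t)$ using the change-of-variables formula together with Jacobi's formula~\cref{eq:diffOfJ}. To see that such a reduction is legitimate, note first that $|J_t|$ is bounded uniformly in $t$ by~\cref{eq:Phi_biLip}, so $(u,v)\mapsto d(\phi_-u,\phi_-v)$ is a bounded bilinear form on $H^1_{L^2(M)}(\mathbb{R}_+)\times H^1_{L^2(M)}(\mathbb{R}_+)$ and hence $u\mapsto d(\phi_-u,\phi_-u)$ is (locally Lipschitz) continuous on $H^1_{L^2(M)}(\mathbb{R}_+)$; and since $\nabla\cdot\mathbf{w}\in L^\infty$ by~\cref{ass:eqdata}, the map $u\mapsto\frac12\int_{\mathbb{R}_+}\int_{M(t)}(\nabla\cdot\mathbf{w})u^2\,\mathrm{d}x_t\mathrm{d}t$ is continuous on $L^2_{L^2(M)}(\mathbb{R}_+)$, into which $H^1_{L^2(M)}(\mathbb{R}_+)$ embeds continuously by compatibility of $(L^2(M),\phi)$. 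It therefore suffices to prove the inequality for $u$ in the dense (by~\cref{lem:density1}) subset $\mathcal{D}$ from~\cref{eq:test}.

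Fix $u\in\mathcal{D}$ and set $w:=\phi_-u$; then $w$ is smooth on $\overline{\mathbb{R}_+}\times M(0)$ and vanishes for large $t$. Writing $w\,\partial_t w=\frac12\partial_t(w^2)$ and integrating by parts in $t$ — which is valid because $t\mapsto|J_t|(x)$ is Lipschitz with $\partial_t|J_t|=\phi_{-t}(\nabla\cdot\mathbf{w})\,|J_t|$, by~\cref{eq:diffOfJ} and the remark following it (invoking~\cref{ass:eqdata,ass:tech}) — and using $J_0=1$ together with the compact support of $w$, I obtain
\begin{equation*}
d(\phi_-u,\phi_-u)=-\frac12\int_{\mathbb{R}_+}\int_{M(0)}\partial_t(w^2)\,|J_t|\,\mathrm{d}x_0\mathrm{d}t=\frac12\int_{M(0)}w(0)^2\,\mathrm{d}x_0+\frac12\int_{\mathbb{R}_+}\int_{M(0)}w^2\,\phi_{-t}(\nabla\cdot\mathbf{w})\,|J_t|\,\mathrm{d}x_0\mathrm{d}t.
\end{equation*}
The first term is nonnegative and may be discarded. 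In the second, use $w^2=\phi_{-t}(u^2)$ so that the integrand equals $\phi_{-t}\bigl(u^2\,\nabla\cdot\mathbf{w}\bigr)\,|J_t|$, and then the change-of-variables identity $\int_{M(0)}\phi_{-t}(g)\,|J_t|\,\mathrm{d}x_0=\int_{M(t)}g\,\mathrm{d}x_t$ turns it into $\int_{M(t)}(\nabla\cdot\mathbf{w})\,u^2\,\mathrm{d}x_t$. This yields
\begin{equation*}
d(\phi_-u,\phi_-u)=\frac12\int_{M(0)}w(0)^2\,\mathrm{d}x_0+\frac12\int_{\mathbb{R}_+}\int_{M(t)}(\nabla\cdot\mathbf{w})\,u^2\,\mathrm{d}x_t\mathrm{d}t\ge\frac12\int_{\mathbb{R}_+}\int_{M(t)}(\nabla\cdot\mathbf{w})\,u^2\,\mathrm{d}x_t\mathrm{d}t
\end{equation*}
for every $u\in\mathcal{D}$.

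Finally, given an arbitrary $u\in H^1_{L^2(M)}(\mathbb{R}_+)$, approximate it by $u_n\in\mathcal{D}$ in the $H^1_{L^2(M)}(\mathbb{R}_+)$-norm; by the continuity recorded above, $d(\phi_-u_n,\phi_-u_n)\to d(\phi_-u,\phi_-u)$ and $\int_{\mathbb{R}_+}\int_{M(t)}(\nabla\cdot\mathbf{w})u_n^2\to\int_{\mathbb{R}_+}\int_{M(t)}(\nabla\cdot\mathbf{w})u^2$, so the inequality passes to the limit. The computation is short; the points I would take care with are that \emph{no} homogeneous initial condition is imposed here, so the boundary term at $t=0$ genuinely appears (it happens to be nonnegative, which is exactly what is needed); that the integration by parts in time rests on $|J|\in W^{1,\infty}\bigl(\mathbb{R}_+;L^\infty(B_r)\bigr)$ and thus on~\cref{ass:eqdata,ass:tech}; and that the vanishing of $w^2|J_t|$ as $t\to\infty$ is what makes it cleaner to argue on $\mathcal{D}$ and then pass to the limit rather than integrating by parts directly in $H^1_{L^2(M)}(\mathbb{R}_+)$.
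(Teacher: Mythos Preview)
Your proof is correct and follows essentially the same route as the paper: restrict to $\mathcal{D}$, integrate by parts in time using Jacobi's formula, drop the nonnegative boundary term at $t=0$, change variables back to $M(t)$, and extend by density via continuity of both sides on $H^1_{L^2(M)}(\mathbb{R}_+)$. One minor point: you invoke~\cref{ass:tech}, but the lemma's hypotheses are only~\cref{ass:phi,ass:eqdata}, and these already yield $|J|\in W^{1,\infty}\bigl(\mathbb{R}_+;L^\infty(B_r)\bigr)$ (see the remark just before~\cref{ass:tech}), so~\cref{ass:tech} is not needed here.
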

\begin{proof}
For an arbitrary $v\in \phi_-(\mathcal{D})$ we have 
\begin{equation*}
\begin{aligned}
d(v,v) &=-\int_{\mathbb{R}_+}\int_{M(0)}v\,\partial_tv\,|J_t|\,\mathrm{d} x_0\mathrm{d}t\\
&=\int_{\mathbb{R}_+}\int_{M(0)}\partial_t(v|J_t|)\,v\,\mathrm{d} x_0\,\mathrm{d}t - \lim_{T\to\infty}\int_{M(0)}\phi_{-T}v^2|J_T|\,\mathrm{d} x_0+\int_{M(0)}v^2\,\mathrm{d} x_0\\
&=\int_{\mathbb{R}_+}\int_{M(0)}v\,\partial_tv\,|J_t|\,\mathrm{d} x_0\,\mathrm{d}t+\int_{\mathbb{R}_+}\int_{M(0)}v^2\phi_{-t}(\nabla \cdot \mathbf{w})|J_t|\mathrm{d} x_0\mathrm{d}t+\int_{M(0)}v^2\,\mathrm{d} x_0\\
&\geq -d(v,v)+\int_{\mathbb{R}_+}\int_{M(0)}\phi_{-t}(\nabla \cdot \mathbf{w})v^2|J_t|\,\mathrm{d} x_0\mathrm{d}t.
\end{aligned}
\end{equation*}
Here, the limit term is zero as $v$ has a compact support in time. With $u=\phi v\in\mathcal{D}$ the above inequality is equivalent to 
\begin{equation*}
d(\phi_-u,\phi_-u)\geq \int_{\mathbb{R}_+}\int_{M(t)}(1/2\nabla \cdot \mathbf{w})u^2\, \mathrm{d}x_t\mathrm{d}t.
\end{equation*}
The bound is also valid for all $u\in H^1_{L^2(M)}(\mathbb{R}_+)$, as the bilinear form $d(\phi_{-t}\bigr(\cdot),\phi_{-t}(\cdot)\bigl)\colon H^1_{L^2(M)}(\mathbb{R}_+)\times H^1_{L^2(M)}(\mathbb{R}_+)\to\mathbb{R}$ is continuous and $\mathcal{D}$ is dense in $H^1_{L^2(M)}(\mathbb{R}_+)$  by~\cref{lem:density1}.
\end{proof}

Before we proceed, we introduce the trace operators on the space-time cylinder $\mathbb{R}_+\times M(0)$. By considering tensor operators, compare~\cite[Section~4]{EEEH24}, we can extend the spatial trace operators on $M(0)$ (see~\cref{sec:comp}) to
\begin{equation}\label{eq:tracespaacetime}
\begin{aligned}
            T_{\partial M(0)}&\colon L^2\bigl(\mathbb{R_+};H^1(M(0))\bigr)\rightarrow L^2\bigl(\mathbb{R_+};H^{1/2}(\partial M(0))\bigr),\\
            T_{i,0}&\colon L^2\bigl(\mathbb{R_+};V_i(0)\bigr)\rightarrow L^2\bigl(\mathbb{R_+};\Lambda(0)\bigr),
\end{aligned}
\end{equation}
which once more are linear, bounded, and surjective. Furthermore, we have the equality
\begin{equation*}
T_{i,0}v =\restr{(T_{\partial\Omega_i(0)}v)}{\mathbb{R}_+\times \Gamma(0)}\quad\text{for all }v\in L^2\bigl(\mathbb{R_+};V_i(0)\bigr)
\end{equation*}
and, by~\cite[Lemma~4.2]{EEEH24}, the identifications 
\begin{equation}\label{eq:L2Videntities}
\begin{aligned}
L^2\bigl(\mathbb{R_+};X(0)\bigr)&=\bigl\{v\in L^2\bigl(\mathbb{R_+};H^1(M(0))\bigr):T_{\partial M(0)} v=0\bigr\}\quad\text{and}\\
L^2\bigl(\mathbb{R_+};V_i(0)\bigr)&=\bigl\{v\in L^2\bigl(\mathbb{R_+};H^1(\Omega_i(0))\bigr):\restr{(T_{\partial\Omega_i(0)}v)}{\mathbb{R}_+\times (\partial \Omega_i(0)\backslash \Gamma(0))}=0\bigr\}.
\end{aligned}
\end{equation}

\begin{lemma}\label{lem:Jvtest}
If \cref{ass:phi,ass:eqdata,ass:tech} hold and $v\in \phi_-(U)$ then $|J|v$ and $1/|J|v$ are also elements in $\phi_-(U)$.
\end{lemma}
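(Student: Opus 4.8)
The space $U = H^1_{L^2(M)}(\mathbb{R}_+) \cap L^2_X(\mathbb{R}_+)$, so $\phi_- v \in H^1\bigl(\mathbb{R}_+; L^2(M(0))\bigr) \cap L^2\bigl(\mathbb{R}_+; X(0)\bigr)$ by definition, where $X = H^1_0(\Omega)$ or $H^1_0(\Omega_i)$. Writing $w = \phi_- v$, the goal is to show $|J| w$ and $\frac{1}{|J|} w$ lie in the same intersection space (then $|J|v = \phi(|J| w) \in \phi_-^{-1}$ of that, i.e.\ in $\phi_-(U)$, and likewise for $1/|J|$). Since $1/|J|$ satisfies the same regularity hypotheses as $|J|$ by the remark following~\cref{ass:tech}, it suffices to treat $|J| w$; the other case is identical with $|J|$ replaced by $1/|J|$. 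The key structural input is that, by the remark after~\cref{ass:eqdata} together with~\cref{ass:tech}, we have $|J| \in W^{1,\infty}\bigl(\mathbb{R}_+; L^\infty(B_r)\bigr) \cap L^\infty\bigl(\mathbb{R}_+; W^{1,\infty}(B_r)\bigr)$, so $|J|$ is bounded, bounded away from zero, Lipschitz in time with values in $L^\infty$, and has a spatial gradient bounded uniformly in time.

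\textbf{Step 1: the $L^2_X$ component.} I would check that $|J| w \in L^2\bigl(\mathbb{R}_+; X(0)\bigr)$. For a.e.\ $t$, $w(t) \in H^1_0(M(0))$ and multiplication by the fixed function $|J_t| \in W^{1,\infty}(B_r)$ maps $H^1_0(M(0))$ into itself (this is standard: $\nabla(|J_t| w(t)) = |J_t|\nabla w(t) + (\nabla |J_t|) w(t)$, and both terms are in $L^2$ with norm controlled by $\|\,|J_t|\,\|_{W^{1,\infty}}\|w(t)\|_{H^1}$; the homogeneous boundary trace is preserved since the trace is multiplicative and $|J_t|$ is continuous). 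Integrating the square of this bound over $t$ and using the uniform-in-time bound on $\|\,|J_t|\,\|_{W^{1,\infty}(B_r)}$ gives $|J| w \in L^2\bigl(\mathbb{R}_+; H^1_0(M(0))\bigr) = L^2\bigl(\mathbb{R}_+; X(0)\bigr)$.

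\textbf{Step 2: the $H^1_{L^2}$ component.} I would verify $|J| w \in H^1\bigl(\mathbb{R}_+; L^2(M(0))\bigr)$. Clearly $|J| w \in L^2\bigl(\mathbb{R}_+; L^2(M(0))\bigr)$ by boundedness of $|J|$. For the time derivative, the natural candidate is the Leibniz formula $\partial_t(|J| w) = (\partial_t |J|) w + |J|\, \partial_t w$. Both terms are in $L^2\bigl(\mathbb{R}_+; L^2(M(0))\bigr)$: the second because $|J| \in L^\infty$ and $\partial_t w \in L^2\bigl(\mathbb{R}_+; L^2(M(0))\bigr)$; the first because $\partial_t |J| \in L^\infty\bigl(\mathbb{R}_+; L^\infty(B_r)\bigr)$ (from $|J| \in W^{1,\infty}\bigl(\mathbb{R}_+; L^\infty(B_r)\bigr)$) and $w \in L^2\bigl(\mathbb{R}_+; L^2(M(0))\bigr)$. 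To justify that this candidate really is the weak time derivative, I would either invoke a product rule for vector-valued Sobolev functions when one factor is Lipschitz-in-time with $L^\infty$ values — a standard result — or argue by density: approximate $w$ by smooth functions in $C^\infty_0(\mathbb{R} \times M(0))$ (as in \cref{lem:density1}, $\mathcal{D}_0$-type arguments) for which the classical product rule holds, then pass to the limit using the continuity of both sides of the Leibniz identity as bilinear-type maps on $H^1\bigl(\mathbb{R}_+; L^2(M(0))\bigr) \times W^{1,\infty}\bigl(\mathbb{R}_+; L^\infty(B_r)\bigr)$.

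\textbf{Main obstacle.} The only genuinely delicate point is justifying the Leibniz rule in time in Step 2 — i.e.\ that $|J| w$ is weakly differentiable in $t$ with the expected derivative — because $w$ takes values in the infinite-dimensional space $L^2(M(0))$ and $|J|$ is merely Lipschitz (not $C^1$) in time. I would handle this by the density/mollification route: for $w$ in a dense subset of smooth-in-time functions the identity is classical, and since both sides depend continuously on $w \in H^1\bigl(\mathbb{R}_+; L^2(M(0))\bigr)$ for fixed $|J|$ — using the uniform bounds $\|\,|J|\,\|_{L^\infty(\mathbb{R}_+ \times B_r)}$ and $\|\partial_t |J|\,\|_{L^\infty(\mathbb{R}_+ \times B_r)}$ from~\cref{ass:tech} and the remark after~\cref{ass:eqdata} — the identity extends to all of $\phi_-(U)$. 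Everything else (the boundary condition, the $L^2_X$ bound) is a routine consequence of the $W^{1,\infty}$-in-space regularity of $|J|$ and $1/|J|$.
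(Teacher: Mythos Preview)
Your proposal is correct and follows essentially the same two-step structure as the paper (verify the $H^1$-in-time regularity via the product/chain rule, and verify membership in $L^2(\mathbb{R}_+;X(0))$). The one noteworthy difference is in how the zero-trace condition is handled: you argue directly that multiplication by $|J_t|\in W^{1,\infty}$ preserves $H^1_0(M(0))$ pointwise in $t$ (via multiplicativity of the trace), whereas the paper instead establishes $|J|v\in L^2(\mathbb{R}_+;H^1(M(0)))$ first and then obtains the vanishing trace by approximating $v$ with a sequence in $\phi_-(\mathcal{D}_0)$ and using continuity of $v\mapsto|J|v$ in $L^2(\mathbb{R}_+;H^1(M(0)))$ together with continuity of the space-time trace $T_{\partial M(0)}$. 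Your direct route is arguably cleaner; the paper's density argument, on the other hand, makes the continuity of $v\mapsto|J|v$ explicit, which it then reuses. Also, you flag the Leibniz rule in time as the main obstacle and justify it by density, while the paper simply invokes ``the chain rule'' and treats this step as trivial --- your caution here is warranted but not strictly necessary given the standing regularity assumptions on $|J|$.
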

\begin{proof}
If $v\in \phi_-(U)$ then, by~\cref{ass:tech} and the chain rule, one trivially obtains that $|J|v$ is an element in $H^1\bigl(\mathbb{R_+};L^2(M(0))\bigr)\cap L^2\bigl(\mathbb{R_+};H^1(M(0))\bigr)$ and that the map $v\mapsto |J|v$ is continuous in $L^2\bigl(\mathbb{R_+};H^1(M(0))\bigr)$. It remains to verify that $|J|v\in L^2\bigl(\mathbb{R_+};X(0)\bigr)$. As $\mathcal{D}_0\subset U$ is dense in $L^2_X(\mathbb{R}_+)$ and $|J|\in C(\mathbb{R}_+\times \mathbb{R}^n, \mathbb{R})$, we can choose a sequence $\{v_n\}\subset \phi_-(\mathcal{D}_0)$ that converges to $v$ in $L^2\bigl(\mathbb{R_+};X(0)\bigr)$ and obtain that
\begin{equation*}
T_{\partial M(0)}(|J|v)=\lim_{n\to\infty} T_{\partial M(0)}(|J|v_n)
=\lim_{n\to\infty} \restr{( |J| v_n)}{\mathbb{R}_+\times\partial M(0)}=0.
\end{equation*}
Hence, $|J|v$ is also an element in $L^2\bigl(\mathbb{R_+};X(0)\bigr)$, i.e., $|J|v\in \phi_-(U)$. The very same argumentation also holds for $1/|J|v$.
\end{proof}

We can now prove the existence of a solution to~\cref{eq:weak} with homogeneous Dirichlet boundary conditions. The proof closely follows~\cite[Lemma~2.3]{costabel90} and is based on Lions' projection lemma, see~\cite{Lions1} or~\cite[Lemma~2.4]{Scherzer} for an English proof. 
\begin{theorem}\label{lem:homexist}
If~\cref{ass:phi,ass:eqdata,ass:tech} hold, then for every $g\in L^2_{X^*}(\mathbb{R}_+)$ there exists a solution $u\in L^2_X(\mathbb{R}_+)$ to~\cref{eq:weak} such that $\phi_-u\in H_0^1(\mathbb{R}_+;X(0)^*)$
and
\begin{equation}\label{eq:hombound}
\bigl(\|u\|^2_{L^2_{X}(\mathbb{R}_+)}+\|\partial_t(\phi_-u)\|^2_{L^2(\mathbb{R}_+;X(0)^*)}\bigr)^{1/2}\leq C \|g\|_{L^2_{X^*}(\mathbb{R}_+)}.
\end{equation}
\end{theorem}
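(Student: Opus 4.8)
The plan is to transfer the problem to the fixed domain $M(0)$ via the isomorphism $\phi$, and there apply Lions' projection lemma. Set $\hat u = \phi_- u$, $\hat g = \phi^* g \in L^2(\mathbb{R}_+;X(0)^*)$, and work in the Hilbert space $H = L^2(\mathbb{R}_+;X(0))$ with the dense subspace $\Phi_-(\mathcal{D}_0) \subset \phi_-(U) =: \mathcal{U}$ of test functions (dense by \cref{lem:density1}). Define the bilinear form $E(\hat u, \hat v) = d(\hat u, \hat v) + \hat c(\hat u,\hat v)$, where $\hat c(\hat u,\hat v) = c(\phi \hat u,\phi\hat v)$ written out on $M(0)$ using the change of variables from \cref{lem:H1Compatibility}; this is exactly $a(\phi\hat u, \phi\hat v)$. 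Lions' lemma requires: (i) for each fixed $\hat v$ in the test space, $\hat u \mapsto E(\hat u,\hat v)$ is continuous on $H$; (ii) a coercivity-type bound $E(\hat v,\hat v) \geq \gamma \|\hat v\|_H^2$ for all $\hat v$ in the test space. Given these, one obtains $\hat u \in H$ with $E(\hat u, \hat v) = \langle \hat g, \hat v\rangle$ for all test $\hat v$, which by density in $\mathcal{U}$ (using continuity of $E(\hat u,\cdot)$ on $\mathcal{U}$, which needs a little care) gives the weak formulation \cref{eq:weak}.

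For the continuity (i): the $\hat c$ part is continuous on $H \times H$ by \cref{ass:eqdata}(i) and the bounds on $J$, $D\Phi$ from \cref{eq:Phi_biLip}, so only the $d$ part needs attention. For fixed $\hat v \in \Phi_-(\mathcal{D}_0)$, $d(\hat u,\hat v) = -\int_{\mathbb{R}_+}\int_{M(0)} \hat u\,\partial_t\hat v\,|J_t|\,\mathrm{d}x_0\mathrm{d}t$, and since $\partial_t\hat v \in C^\infty_0$ and $|J| \in L^\infty$ by \cref{ass:tech}, this is a bounded linear functional of $\hat u \in L^2(\mathbb{R}_+;L^2(M(0))) \supset H$; hence continuous on $H$. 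For coercivity (ii): here is the heart of the argument. Write $E(\hat v,\hat v) = d(\hat v,\hat v) + \hat c(\hat v,\hat v)$. By \cref{lem:dmon}, $d(\hat v,\hat v) \geq \int_{\mathbb{R}_+}\int_{M(t)} \tfrac12 (\nabla\cdot\mathbf{w})(\phi\hat v)^2\,\mathrm{d}x_t\mathrm{d}t$, and $\hat c(\hat v,\hat v) = \int_{\mathbb{R}_+}\int_{M(t)} \alpha|\nabla \phi\hat v|^2 + \beta(\phi\hat v)^2\,\mathrm{d}x_t\mathrm{d}t$. Adding, the zeroth-order terms combine to $\int\int (\tfrac12\nabla\cdot\mathbf{w} + \beta)(\phi\hat v)^2 \geq c\int\int (\phi\hat v)^2$ by \cref{ass:eqdata}(iii), while the gradient term is $\geq c\int\int|\nabla\phi\hat v|^2$ by \cref{ass:eqdata}(ii). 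Thus $E(\hat v,\hat v) \geq c\|\phi\hat v\|^2_{L^2_{H^1(M)}(\mathbb{R}_+)} \geq \gamma\|\hat v\|^2_{L^2(\mathbb{R}_+;X(0))}$ using the norm equivalence from compatibility (\cref{lem:H1Compatibility}) and the Poincaré inequality on $X(0) \in \{H^1_0(\Omega), H^1_0(\Omega_i)\}$; note this holds for all $\hat v \in \mathcal{U}$, not just the dense subset, which is what Lions' lemma in the form of \cite[Lemma~2.4]{Scherzer} actually needs.

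The output of Lions' lemma is $\hat u \in H$ with the required bound $\|\hat u\|_H \leq C\|\hat g\|_{L^2(\mathbb{R}_+;X(0)^*)} \leq C\|g\|_{L^2_{X^*}(\mathbb{R}_+)}$, which translated back gives $u = \phi\hat u \in L^2_X(\mathbb{R}_+)$ with $\|u\|_{L^2_X(\mathbb{R}_+)} \leq C\|g\|_{L^2_{X^*}(\mathbb{R}_+)}$, the first half of \cref{eq:hombound}, and $a(u,v) = \langle g,v\rangle$ for all $v \in \mathcal{D}_0$, hence for all $v \in U$ by density and continuity — for the latter one checks $a(u,\cdot)$ extends continuously to $U$, which for the $c$-part is clear and for the $d$-part follows since $d(\phi_-u,\phi_-v) = \langle g,v\rangle - c(u,v)$ is already controlled. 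It remains to establish $\hat u \in H^1_0(\mathbb{R}_+;X(0)^*)$ with the time-derivative bound. From \cref{eq:weak} with $v \in \mathcal{D}_0$, one reads off $-\int_{\mathbb{R}_+}\langle \partial_t \hat v, |J_t|\hat u\rangle\,\mathrm{d}t = \langle \hat g,\hat v\rangle - \hat c(\hat u,\hat v)$; the right side, as a functional of $\hat v$, is bounded by $C\|g\|_{L^2_{X^*}}\|\hat v\|_H$ using the bound on $\hat u$ just obtained. Invoking \cref{lem:Jvtest} to replace $|J_t|\hat v$ by a general test function (i.e. $|J|\hat v$ ranges over a set for which we can test), one identifies $\partial_t(\cdot)$ in the $L^2(\mathbb{R}_+;X(0)^*)$ sense with the appropriate norm bound, and the compact-support-in-time structure of $\mathcal{D}_0$ forces the $H^1_0$ (vanishing trace at $t=0$) condition. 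The main obstacle I anticipate is precisely this last step: carefully justifying that the weak time-derivative of $\hat u$ (or of $|J|\hat u$) lives in $L^2(\mathbb{R}_+;X(0)^*)$ with the right bound and that the initial condition is attained in the $H^1_0$ sense — one must handle the weight $|J_t|$ correctly (this is where \cref{ass:tech} and \cref{lem:Jvtest} are essential) and confirm that testing against $\mathcal{D}_0$ suffices to characterize the distributional derivative, mirroring \cite[Lemma~2.3]{costabel90}.
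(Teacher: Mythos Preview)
Your proposal is correct and follows essentially the same route as the paper: Lions' projection lemma applied to $a$ on $L^2_X(\mathbb{R}_+)\times U$, with coercivity coming from \cref{lem:dmon} plus \cref{ass:eqdata}(ii)--(iii), then extracting $\partial_t(\phi_-u)\in L^2(\mathbb{R}_+;X(0)^*)$ by testing with $\phi(1/|J|\,w\varphi)$ via \cref{lem:Jvtest}. One small correction on your last point: it is \emph{not} the compact support of $\mathcal{D}_0$ that forces $u(0)=0$ (elements of $\mathcal{D}_0$ need not vanish at $t=0$); rather, once $\phi_-u\in H^1\cap L^2\hookrightarrow C(\mathbb{R}^0_+;L^2)$, you compare the weak equation against the integration-by-parts identity~\cref{eq:initial} with a test function $v\in U$ that is nonzero at $t=0$, and the resulting boundary term $\int_{M(0)}u(0)v(0)$ must vanish.
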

\begin{proof}
Consider the bilinear form $a\colon L^2_X(\mathbb{R}_+)\times U\to\mathbb{R}$ and observe that $u\mapsto a(u,v)$ is continuous on $L^2_X(\mathbb{R}_+)$ for every fixed $v\in U$. For $u\in U$ we have, by~\cref{ass:eqdata,lem:dmon}, that
\begin{equation*}%\label{eq:amon}
a(u,u)\geq \int_{\mathbb{R}_+}\int_{M(t)}\alpha |\nabla u|^2+(1/2\nabla \cdot \mathbf{w}+\beta)u^2\, \mathrm{d}x_t\mathrm{d}t\geq c\|u\|^2_{L^2_{H^1(M)}(\mathbb{R}_+)}.
\end{equation*}
Furthermore, $\mathcal{D}_0 \subset U$ is dense in $L^2_X(\mathbb{R}_+)$ via \cref{lem:density1}.
These properties of $a$ yield that the hypothesis of Lions' projection lemma is fulfilled, i.e., there exists a solution $u\in L^2_X(\mathbb{R}_+)$ to~\cref{eq:weak} such that 
\begin{equation}\label{eq:uL2Xbound}
\|u\|_{L^2_{X}(\mathbb{R}_+)}\leq C \|g\|_{L^2_{X^*}(\mathbb{R}_+)}.
\end{equation}
Here, we have used that $L^2_{X}(\mathbb{R}_+)^*\cong L^2_{X^*}(\mathbb{R}_+)$, see~\cref{sec:abs}. 

It remains to show that we have the higher regularity $\phi_-u\in H_0^1(\mathbb{R}_+;X(0)^*)$. To this end, let $\varphi\in C^\infty_0(\mathbb{R}_+)$ and $w\in X(0)$. With $v=w\varphi$ we have that $\phi v$ and $|J_-|\phi v=\phi(1/|J|v)$ are elements in $U$, the latter by~\cref{lem:Jvtest}. Furthermore, 
\begin{align}\label{eq:weaktimederivative1}
\nonumber&-\int_{\mathbb{R}_+}\Bigl(\int_{M(0)}\phi_{-t}u(t)\, w\,\mathrm{d} x_0\Bigr)\varphi'(t)\,\mathrm{d}t\\
\nonumber&\quad=-\int_{\mathbb{R}_+}\int_{M(0)}\phi_{-t}u\,\bigl(\partial_t (1/|J_t|v)-\partial_t(1/|J_t|)v\bigr)|J_t|\,\mathrm{d} x_0\mathrm{d}t\\
\nonumber&\quad=d\bigl(\phi_{-}u, \phi_-(|J_-|\phi v)\bigr)+\int_{\mathbb{R}_+}\int_{M(0)}\partial_t(1/|J_t|)\,|J_t|\,\phi_{-t}u\,v\,\mathrm{d} x_0\mathrm{d}t\\
\nonumber&\quad=\langle g,|J_-|\phi v\rangle_{L^2_{X^*}(\mathbb{R}_+)\times L^2_{X}(\mathbb{R}_+)}-c(u,|J_-|\phi v)+\int_{\mathbb{R}_+}\int_{M(0)}\partial_t(1/|J_t|)\,|J_t|\,\phi_{-t}u\,v\,\mathrm{d} x_0\mathrm{d}t\\
\nonumber&\quad=\int_{\mathbb{R}_+}\Bigl(\langle g(t),\phi_t(1/|J_t|w)\rangle_{X(t)^*\times X(t)}\\
&\qquad-\int_{M(t)}\alpha\nabla u\cdot\nabla \phi_t(1/|J_t|w) +\beta u\phi_t(1/|J_t|w)\, \mathrm{d}x_t\\
\nonumber&\qquad+\int_{M(0)}\partial_t(1/|J_t|)\,|J_t|\,\phi_{-t}u\,w\,\mathrm{d} x_0\Bigr)\varphi(t)\mathrm{d}t\\
\nonumber&\quad=\int_{\mathbb{R}_+}\langle p(t), w\rangle_{X(0)^*\times X(0)}\,\varphi(t)\,\mathrm{d}t,
\end{align}
where $p\in  L^2\bigl(\mathbb{R}_+;X(0)^*\bigr)$ and 
\begin{align}\label{eq:weaktimederivative2}
\nonumber\|p\|_{L^2(\mathbb{R}_+;X(0)^*)}& \leq C\|1/J\|_{L^\infty(\mathbb{R}_+; W^{1,\infty}(B_r))}\|g\|_{ L^2_{X^*}(\mathbb{R}_+)}\\
\nonumber&\quad+C\|1/J\|_{L^\infty(\mathbb{R}_+; W^{1,\infty}(B_r))}\|u\|_{L^2_X(\mathbb{R}_+)}\\
&\quad+C\|1/J\|_{W^{1,\infty}(\mathbb{R}_+; L^\infty(B_r))}\|J\|_{L^\infty(\mathbb{R}_+; L^\infty(B_r))}\|u\|_{L^2_{L^2(M)}(\mathbb{R}_+)}\\
\nonumber&\leq C\|g\|_{ L^2_{X^*}(\mathbb{R}_+)}.
\end{align}
The last bound follows by~\cref{eq:uL2Xbound}. Hence, $\partial_t\phi_- u\in L^2\bigl(\mathbb{R}_+;X(0)^*\bigr)$, i.e., $\phi_-u\in H^1(\mathbb{R}_+;X(0)^*)$, and~\cref{eq:hombound} follows by the bounds above. As 
\begin{equation*}
L^2\bigl(\mathbb{R}_+;X(0))\bigr)\cap H^1(\mathbb{R}_+;X(0)^*)\hookrightarrow C\bigl(\mathbb{R}^0_+;L^2(M(0))\bigr)
\end{equation*}
we have that $\restr{(\phi_-u)}{t=0}\in L^2\bigl(M(0)\bigr)$. Combining~\cref{eq:initial,eq:weak} yields $\restr{(\phi_-u)}{t=0}=0$, and thus $\phi_-u\in H_0^1(\mathbb{R}_+;X(0)^*)$, compare with~\cite[Equation~2.2]{costabel90} and the proof of~\cite[Lemma~2.3]{costabel90}.
\end{proof}

\section{Temporal \texorpdfstring{$H^{1/2}$-setting}{H\^{½}-setting} for evolving domains}\label{sec:Honehalf}

In the rest of the paper we will make use of the Sobolev--Bochner spaces stated in~\cref{tab:spaces}. Here, 
\begin{equation*}
	\begin{gathered}
	    H^{s}(I)=\{u\in L^2(I):  \|u\|_{H^{s}(I)}<\infty\}\quad\text{with}\quad
	    \|u\|^2_{H^{s}(I)} =
	                 |u|_{H^{s}(I)}^2
	                        +\|u\|^2_{L^2(I)}\\
	    \text{and}\quad|u|^2_{H^{s}(I)} =
	                 \int_{I}\int_{I}\frac{\abs{u(\tau)-u(t)}^2}{|\tau-t|^{1+2s}}\,\mathrm{d}\tau\,\mathrm{d} t,
	\end{gathered}
\end{equation*}
for $s=1/2$ or $s=1/4$ and on the time intervals $I=\mathbb{R}_+$ or $I=\mathbb{R}$. Furthermore, $H_{(0,.)}^{1/2}(\mathbb{R}_+)$ is the temporal Lions--Magenes space, i.e., 
\begin{equation*}
	   H_{(0,.)}^{1/2}(\mathbb{R}_+)=\{u\in L^2(\mathbb{R}_+): e_{\mathbb{R}}u\in H^{1/2}(\mathbb{R})\}\quad\text{with}\quad
	    \|u\|_{H_{(0,.)}^{1/2}(\mathbb{R}_+)}=\|e_{\mathbb{R}}u\|_{H^{1/2}(\mathbb{R})},
\end{equation*}
where  $e_{\mathbb{R}}$ denotes the extension by zero from $\mathbb{R}_+$ to $\mathbb{R}$. We will also use the notation 
\begin{equation*}
\begin{gathered}
a_i(u,v) =d_i(\phi_-u,\phi_-v)+c_i(u,v), \quad\text{where}\\
d_i(u,v)= -\int_{\mathbb{R}_+}\int_{\Omega_i(0)}u\,\partial_tv\,|J_t|\,\mathrm{d} x_0\mathrm{d}t\quad\text{and} 
\quad c_i(u,v)=\int_{\mathbb{R}_+}\int_{\Omega_i(t)}\alpha\nabla u\cdot\nabla v+\beta uv\,\mathrm{d}x_t\mathrm{d}t.
\end{gathered}
\end{equation*}
We denote the corresponding bilinear forms on the whole domain $\Omega$ by~$a,d,c$.
\begin{table}[t]
\def\arraystretch{1.7}
\normalsize
\centering
\begin{tabular}{lcl}
\hline 
 \multicolumn{3}{l}{$Q_i=H_0^1\bigr(\mathbb{R}_+;H^{-1}(\Omega_i(0))\bigr)\cap L^2\bigl(\mathbb{R}_+;V_i(0)\bigr)$}\\
	$U_i=H^1_{L^2(\Omega_i)}(\mathbb{R}_+)\cap L^2_{V_i}(\mathbb{R}_+)$
	& \qquad& 
	$U_i^0=H^1_{L^2(\Omega_i)}(\mathbb{R}_+)\cap L^2_{H^1_0(\Omega_i)}(\mathbb{R}_+)$\\
	$W=H^{1/2}_{(0,\cdot)\,L^2(\Omega)}(\mathbb{R}_+)\cap L^2_{H^1_0(\Omega)}(\mathbb{R}_+)$
	& \qquad&
        $\tilde{W}=H^{1/2}_{L^2(\Omega)}(\mathbb{R}_+)\cap L^2_{H^1_0(\Omega)}(\mathbb{R}_+)$\\
        $W_i=H^{1/2}_{(0,\cdot)\,L^2(\Omega_i)}(\mathbb{R}_+)\cap L^2_{V_i}(\mathbb{R}_+)$
        & \qquad&
        $\tilde{W}_i=H^{1/2}_{L^2(\Omega_i)}(\mathbb{R}_+)\cap L^2_{V_i}(\mathbb{R}_+)$\\
        $W_i^0=H^{1/2}_{(0,\cdot)\,L^2(\Omega_i)}(\mathbb{R}_+)\cap L^2_{H^1_0(\Omega_i)}(\mathbb{R}_+)$  
        & \qquad&
        $\tilde{W}_i^0=H^{1/2}_{L^2(\Omega_i)}(\mathbb{R}_+)\cap L^2_{H^1_0(\Omega_i)}(\mathbb{R}_+)$\\
 \multicolumn{3}{l}{ $Z=H^{1/4}_{L^2(\Gamma)}(\mathbb{R}_+)\cap L^2_{\Lambda}(\mathbb{R}_+)$}\\[5pt]       
\hline
\end{tabular}
\caption{Sobolev--Bochner spaces used in~\cref{sec:Honehalf,sec:trans,sec:SP}.}\label{tab:spaces}
\end{table}

As already stated in the introduction, analyzing the equivalence between the weak form of the original parabolic equation~\cref{eq:weak} and the transmission problem~\cref{eq:trans} is difficult in the space of solutions with temporal regularity of the form $\phi_-u\in H_0^1\bigr(\mathbb{R}_+;H^{-1}(\Omega_i(0))\bigr)$, compare with the weak solution in~\cref{lem:homexist}. Instead we observe that the abstract interpolation result~\cite[Equation 2.24]{costabel90} together with the identification $[H^1(\Omega_i(0)),H^{-1}(\Omega_i(0))]_{1/2}\cong L^2(\Omega_i(0))$, compare with~\cite[Lemma~12.1]{lionsmagenes1}, gives %{\color{blue}is the notation $[,]_{1/2}$clear}
\begin{equation*}
\bigl[H_0^1\bigr(\mathbb{R}_+;H^{-1}(\Omega_i(0))\bigr),L^2\bigl(\mathbb{R}_+;H^1(\Omega_i(0))\bigr)\bigr]_{\frac12}\cong H^{1/2}_{(0,\cdot)}\bigl(\mathbb{R}_+;L^2(\Omega_i(0))\bigr).
\end{equation*}
Hence, $Q_i$ is embedded into $H^{1/2}_{(0,\cdot)}\bigl(\mathbb{R}_+;L^2(\Omega_i(0))\bigr)$, and we obtain 
\begin{equation}\label{eq:keyinterpolate}
Q_i\hookrightarrow \phi_-(W_i).
\end{equation}
The embedding $H_0^1\bigr(\mathbb{R}_+;H^{-1}(\Omega_i(0))\bigr)\cap L^2\bigl(\mathbb{R}_+;H^1_0(\Omega_i(0))\bigr)\hookrightarrow \phi_-(W^0_i)$ also holds true, see~\cite[Equation 2.25]{costabel90}. One possibility is therefore to consider the solution space, or trial space,~$W_i$ together with the test space~$\tilde{W}_i$. 

In preparation for the analysis of the transmission problem, we prove the existence of a unique solution~$u\in W_i$ to to the parabolic equation on $\Omega_i$ with \emph{inhomogeneous} Dirichlet boundary conditions. To this end, observe that the trace operators~\cref{eq:tracespaacetime} can be restricted as 
\begin{equation*}
\begin{aligned}
T_{\partial \Omega_i(0)}&\colon H_{(0,\cdot)}^{1/2}\bigl(\mathbb{R_+}; L^2(\Omega_i(0))\bigr)\cap L^2\bigl(\mathbb{R_+};H^1(\Omega_i(0))\bigr)\rightarrow\\ 
&\qquad H^{1/4}\bigl(\mathbb{R_+}; L^2(\partial \Omega_i(0))\bigr)\cap L^2\bigl(\mathbb{R_+};H^{1/2}(\partial \Omega_i(0))\bigr)\quad\text{and}\\
            T_{i,0}&\colon \phi_-(W_i)\rightarrow \psi_-(Z),
\end{aligned}
\end{equation*}
where the new operators are all linear, bounded, and surjective, see~\cite[Lemma 2.4]{costabel90} and~\cite[Lemma~4.4]{EEEH24}. We also recapitulate the existence result for the heat equation on~$\Omega_i(0)$ with inhomogeneous boundary conditions, see \cite[Corollary~2.11]{costabel90}.
\begin{lemma}\label{lem:nonhomstatexist}
If~\cref{ass:phi} hold then for every $\eta \in \psi_-(Z)$ there exists a unique solution 
$u\in Q_i$ to the heat equation
\begin{equation}\label{eq:heateq}
\ell(u,v)=\int_{\mathbb{R}_+}\int_{\Omega_i(0)} -u\partial_t v+\nabla u\cdot\nabla v\,\mathrm{d}x_0\mathrm{d}t=0\quad\text{for all }v\in\phi_-(U_i^0),
\end{equation}
such that $T_{i,0}u=\eta$.
\end{lemma}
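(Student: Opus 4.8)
The plan is to reduce the statement to the well-posedness of the inhomogeneous Dirichlet problem for the heat equation on the fixed Lipschitz domain $\Omega_i(0)$, that is, to \cite[Corollary~2.11]{costabel90}; the only real work is to transport the interface datum $\eta$ to a datum on all of $\partial\Omega_i(0)$ and to check that the relevant function spaces match up.

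First I would extend $\eta$ by zero and set $\tilde\eta=e_{\partial\Omega_i(0)}\eta$. By the definition of $\Lambda(0)$, the operator $e_{\partial\Omega_i(0)}\colon\Lambda(0)\to H^{1/2}(\partial\Omega_i(0))$ is an isometry, while extension by zero in the temporal variable is trivially bounded on $H^{1/4}(\mathbb{R}_+)$ — here it is used that $1/4<1/2$, so that $H^{1/4}(\mathbb{R}_+)$ coincides with its zero-extendable subspace. Hence $\tilde\eta\in H^{1/4}\bigl(\mathbb{R}_+;L^2(\partial\Omega_i(0))\bigr)\cap L^2\bigl(\mathbb{R}_+;H^{1/2}(\partial\Omega_i(0))\bigr)$, which is precisely the lateral-data class of \cite[Corollary~2.11]{costabel90}. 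That corollary — whose proof rests on Lions' projection lemma after a Fourier-in-time reduction, hence carries over to the semi-infinite interval used throughout this paper — yields a unique $u\in H^1_0\bigl(\mathbb{R}_+;H^{-1}(\Omega_i(0))\bigr)\cap L^2\bigl(\mathbb{R}_+;H^1(\Omega_i(0))\bigr)$ solving the heat equation in the weak form $\ell(u,v)=0$ for all $v\in\phi_-(U^0_i)$ with $T_{\partial\Omega_i(0)}u=\tilde\eta$. Note that the vanishing initial value is encoded in this weak form: integrating $-\int\int u\,\partial_t v$ by parts in time produces the term $\int_{\Omega_i(0)}u(0)v(0)$, which must vanish for all admissible $v$, consistently with $u\in H^1_0\bigl(\mathbb{R}_+;H^{-1}(\Omega_i(0))\bigr)$.

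It then remains to restrict back to $\Gamma(0)$. Since $\tilde\eta$ vanishes on $\partial\Omega_i(0)\setminus\Gamma(0)$, the solution satisfies $\restr{(T_{\partial\Omega_i(0)}u)}{\mathbb{R}_+\times(\partial\Omega_i(0)\setminus\Gamma(0))}=0$, so $u\in L^2\bigl(\mathbb{R}_+;V_i(0)\bigr)$ by~\cref{eq:L2Videntities} and therefore $u\in Q_i$, while $T_{i,0}u=\restr{(T_{\partial\Omega_i(0)}u)}{\mathbb{R}_+\times\Gamma(0)}=\restr{\tilde\eta}{\mathbb{R}_+\times\Gamma(0)}=\eta$. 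For uniqueness, if $u_1,u_2\in Q_i$ both solve~\cref{eq:heateq} with the same interface trace, then $w=u_1-u_2\in Q_i$ has $T_{i,0}w=0$ and takes values in $V_i(0)$, so in fact $w\in L^2\bigl(\mathbb{R}_+;H^1_0(\Omega_i(0))\bigr)$, i.e.\ $T_{\partial\Omega_i(0)}w=0$; extracting from~\cref{eq:heateq} that $w$ solves the homogeneous heat equation in $L^2\bigl(\mathbb{R}_+;H^{-1}(\Omega_i(0))\bigr)$ with $w(0)=0$ and then pairing with $w$ itself gives $\int_{\mathbb{R}_+}\norm{\nabla w(t)}{L^2(\Omega_i(0))}^2\,\mathrm{d}t\le 0$, whence $w=0$; alternatively one invokes the uniqueness part of \cite[Corollary~2.11]{costabel90} directly.

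The proof carries no genuinely new analytic difficulty. The points demanding care are the bookkeeping of spaces — confirming that extension by zero places $\eta$ in exactly Costabel's data class, which is where the definitions of the Lions--Magenes space $\Lambda$ and of $Z$ (in particular its $H^{1/4}$-in-time component) are used, and that the weak form~\cref{eq:heateq} with test space $\phi_-(U^0_i)$ coincides with the one in \cite{costabel90} — together with the routine remark that Costabel's construction passes from a bounded time interval to $\mathbb{R}_+$.
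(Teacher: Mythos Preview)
Your proposal is correct and follows the paper's approach, which simply cites \cite[Corollary~2.11]{costabel90} without further argument; you have filled in the bookkeeping of extending the interface datum by zero to the full boundary and verifying that the resulting data lies in Costabel's lateral class. The aside about ``extension by zero in the temporal variable'' is superfluous---the operator $e_{\partial\Omega_i(0)}$ is purely spatial, and $H^{1/4}$-in-time regularity is preserved simply because $e_{\partial\Omega_i(0)}\colon L^2(\Gamma(0))\to L^2(\partial\Omega_i(0))$ is an isometry---but this does not affect the argument.
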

Note that~\cref{lem:nonhomstatexist} implies that the restricted operator $\restr{T_{i,0}}{L}$ is bijective, where 
\begin{equation*}
L=\{u\in Q_i:l(u,v)=0\text{ for all } v\in\phi_-(U_i^0)\}
\end{equation*}
is a closed subset of $\phi_-(W_i)$.  The open mapping theorem then yields that the solution operator 
\begin{equation*}
R_{i,0}=(\restr{T_{i,0}}{L})^{-1}\colon \psi_-(Z)\to Q_i, \qquad \eta \mapsto u
\end{equation*}
to~\cref{eq:heateq} is a linear and bounded map. Furthermore, $R_{i,0}$ is also a right-inverse to $T_{i,0}\colon\phi_-(W_i)\to  \psi_-(Z)$. Next, we introduce the trace operator from the evolving domain $\Omega_i$ to the evolving interface~$\Gamma$ by
\begin{equation*}
T_i=\psi\, T_{i,0}\,\phi_-\colon W_i\rightarrow Z,
\end{equation*}
which again becomes linear, bounded, and surjective. This trace operator has a bounded right-inverse given by 
\begin{equation*}
R_i=\phi\, R_{i,0}\,\psi_-\colon Z\rightarrow W_i.
\end{equation*}

\begin{lemma}\label{lem:nonhomexist}
If~\cref{ass:phi,ass:eqdata,ass:tech} hold, then for every $\eta\in Z$ and $g\in L^2_{V_i^*}(\R_+)$ there exists a solution $u\in W_i$ to 
\begin{equation}\label{eq:weak2}
a_i(u,v)=\langle g,v\rangle\quad\text{for all }v\in U_i^0,
\end{equation}
such that $T_iu=\eta$, $\phi_{-}u\in H^1_0\bigl(\mathbb{R}_+;H^{-1}(\Omega_i(0))\bigr)$, and  
\begin{equation}\label{eq:weaksoletabound}
\|u\|_{W_i}\leq C (\|g\|_{L^2_{H^{-1}(\Omega_i)}(\mathbb{R}_+)}+\|\eta\|_Z).
\end{equation}
\end{lemma}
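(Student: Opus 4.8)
\emph{Reduction to homogeneous boundary data.} The plan is to write $u=R_i\eta+w$, where $R_i\eta$ realises the boundary datum and $w$ solves a homogeneous-boundary problem handled by \cref{lem:homexist}. Set $w_\eta=R_i\eta\in W_i$. Since $R_i=\phi R_{i,0}\psi_-$ is a bounded right-inverse of $T_i$, we have $T_iw_\eta=\eta$ and $\|w_\eta\|_{W_i}\le C\|\eta\|_Z$; moreover $\phi_-w_\eta=R_{i,0}\psi_-\eta$ lies in $Q_i$, so in particular $\phi_-w_\eta\in H^1_0\bigl(\mathbb{R}_+;H^{-1}(\Omega_i(0))\bigr)$ has zero initial value and $\|\phi_-w_\eta\|_{Q_i}\le C\|\eta\|_Z$. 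This extra regularity of the lift — not available for a generic element of $W_i$ — is what makes the argument work.

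\emph{The reduced source term.} Consider the functional $v\mapsto\langle g,v\rangle-a_i(w_\eta,v)$ on $U_i^0$. The term $c_i(w_\eta,v)$ is bounded by $C\|w_\eta\|_{L^2_{V_i}(\mathbb{R}_+)}\|v\|_{L^2_{H^1_0(\Omega_i)}(\mathbb{R}_+)}$ since $\alpha,\beta\in L^\infty$, and $\langle g,v\rangle$ is bounded by $\|g\|_{L^2_{H^{-1}(\Omega_i)}(\mathbb{R}_+)}\|v\|_{L^2_{H^1_0(\Omega_i)}(\mathbb{R}_+)}$. For $d_i(\phi_-w_\eta,\phi_-v)=-\int_{\mathbb{R}_+}\int_{\Omega_i(0)}\phi_-w_\eta\,\partial_t(\phi_-v)\,|J_t|$ we integrate by parts in time: the boundary term at $t=0$ vanishes because $\phi_-w_\eta$ has zero initial value, and the one at $t\to\infty$ vanishes because $\phi_-v\in H^1\bigl(\mathbb{R}_+;L^2(\Omega_i(0))\bigr)$ decays to zero. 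Using $\phi_-w_\eta\in Q_i$ and \cref{ass:tech} — so that $|J|\in L^\infty\bigl(\mathbb{R}_+;W^{1,\infty}(B_r)\bigr)$ acts boundedly on $H^{-1}(\Omega_i(0))$ and $\partial_t|J|\in L^\infty$ — one obtains
\begin{equation*}
d_i(\phi_-w_\eta,\phi_-v)=\int_{\mathbb{R}_+}\bigl\langle\,|J_t|\,\partial_t\phi_-w_\eta+(\partial_t|J_t|)\phi_-w_\eta,\;\phi_-v\,\bigr\rangle_{H^{-1}(\Omega_i(0))\times H^1_0(\Omega_i(0))}\,\mathrm{d}t,
\end{equation*}
whose first argument lies in $L^2\bigl(\mathbb{R}_+;H^{-1}(\Omega_i(0))\bigr)$ with norm $\le C\|\phi_-w_\eta\|_{Q_i}\le C\|\eta\|_Z$. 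Hence $v\mapsto\langle g,v\rangle-a_i(w_\eta,v)$ is bounded on $U_i^0$ for the $L^2_{H^1_0(\Omega_i)}(\mathbb{R}_+)$-norm, with bound $C\bigl(\|g\|_{L^2_{H^{-1}(\Omega_i)}(\mathbb{R}_+)}+\|\eta\|_Z\bigr)$. Since $\mathcal{D}_0\subset U_i^0$ is dense in $L^2_{H^1_0(\Omega_i)}(\mathbb{R}_+)$ by \cref{lem:density1} and $L^2_{H^1_0(\Omega_i)}(\mathbb{R}_+)^*\cong L^2_{H^{-1}(\Omega_i)}(\mathbb{R}_+)$, this functional is represented by a unique $\hat g\in L^2_{H^{-1}(\Omega_i)}(\mathbb{R}_+)$ with $\|\hat g\|_{L^2_{H^{-1}(\Omega_i)}(\mathbb{R}_+)}\le C\bigl(\|g\|_{L^2_{H^{-1}(\Omega_i)}(\mathbb{R}_+)}+\|\eta\|_Z\bigr)$.

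\emph{Solving the homogeneous problem and assembling.} Apply \cref{lem:homexist} with $M=\Omega_i$, $X=H^1_0(\Omega_i)$ and right-hand side $\hat g$: there is $w\in L^2_{H^1_0(\Omega_i)}(\mathbb{R}_+)$ with $\phi_-w\in H^1_0\bigl(\mathbb{R}_+;H^{-1}(\Omega_i(0))\bigr)$, $a_i(w,v)=\langle\hat g,v\rangle$ for all $v\in U_i^0$, and $\|w\|_{L^2_{H^1_0(\Omega_i)}(\mathbb{R}_+)}+\|\partial_t\phi_-w\|_{L^2(\mathbb{R}_+;H^{-1}(\Omega_i(0)))}\le C\|\hat g\|_{L^2_{H^{-1}(\Omega_i)}(\mathbb{R}_+)}$. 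Then $\phi_-w\in H^1_0\bigl(\mathbb{R}_+;H^{-1}(\Omega_i(0))\bigr)\cap L^2\bigl(\mathbb{R}_+;H^1_0(\Omega_i(0))\bigr)$, which by the interpolation embedding underlying \cref{eq:keyinterpolate} (its $H^1_0$-version) embeds continuously into $\phi_-(W_i^0)$; hence $w\in W_i^0\subset W_i$ with $\|w\|_{W_i}\le C\|\hat g\|_{L^2_{H^{-1}(\Omega_i)}(\mathbb{R}_+)}$, and $T_iw=0$ because the spatial traces of $\phi_-w\in L^2\bigl(\mathbb{R}_+;H^1_0(\Omega_i(0))\bigr)$ vanish. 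Setting $u=w_\eta+w\in W_i$ yields $T_iu=\eta$, $\phi_-u=R_{i,0}\psi_-\eta+\phi_-w\in H^1_0\bigl(\mathbb{R}_+;H^{-1}(\Omega_i(0))\bigr)$, and $a_i(u,v)=a_i(w_\eta,v)+\langle\hat g,v\rangle=\langle g,v\rangle$ for all $v\in U_i^0$ by the definition of $\hat g$; finally $\|u\|_{W_i}\le\|w_\eta\|_{W_i}+\|w\|_{W_i}\le C\bigl(\|g\|_{L^2_{H^{-1}(\Omega_i)}(\mathbb{R}_+)}+\|\eta\|_Z\bigr)$, which is \cref{eq:weaksoletabound}.

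\emph{Main obstacle.} The genuinely delicate point is the integration by parts in the second step: it is what makes $v\mapsto a_i(w_\eta,v)$ continuous for the $L^2_{H^1_0(\Omega_i)}(\mathbb{R}_+)$-topology instead of merely the $H^1$-in-time topology, and it succeeds exactly because the lift is taken in $\phi(Q_i)$, so $\phi_-w_\eta$ has a time derivative in $L^2\bigl(\mathbb{R}_+;H^{-1}(\Omega_i(0))\bigr)$ and a vanishing initial value, together with \cref{ass:tech}, which allows multiplication by $|J|$ and $\partial_t|J|$ to preserve the relevant regularity.
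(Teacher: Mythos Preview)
Your argument has the same architecture as the paper's --- lift via $R_i\eta$, show $v\mapsto a_i(R_i\eta,v)$ extends to $L^2_{H^{-1}(\Omega_i)}(\mathbb{R}_+)$, invoke \cref{lem:homexist}, and assemble --- but the key estimate on $d_i$ is obtained differently. You integrate by parts in $t$, using that $\phi_-w_\eta\in Q_i$ supplies a time derivative in $L^2\bigl(\mathbb{R}_+;H^{-1}(\Omega_i(0))\bigr)$. The paper instead exploits that $R_{i,0}$ produces a \emph{solution of the heat equation}~\cref{eq:heateq}: since \cref{lem:Jvtest} gives $|J|\,\phi_-v\in\phi_-(U_i^0)$, one substitutes this as the test function in $\ell(\phi_-u_\eta,\cdot)=0$, and the identity $\partial_t(\phi_-v)\,|J_t|=\partial_t(|J_t|\phi_-v)-(\partial_t|J_t|)\phi_-v$ converts $d_i(\phi_-u_\eta,\phi_-v)$ into the purely spatial expression
\[
\int_{\mathbb{R}_+}\!\int_{\Omega_i(0)}-\nabla(\phi_-u_\eta)\cdot\nabla(|J_t|\phi_-v)+(\partial_t|J_t|)\,\phi_-u_\eta\,\phi_-v\,\mathrm{d}x_0\mathrm{d}t,
\]
bounded by $C\|u_\eta\|_{L^2_{H^1(\Omega_i)}}\|v\|_{L^2_{H^1(\Omega_i)}}$ with no integration by parts in time whatsoever.

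Your route has the advantage of using only the $Q_i$-regularity of the lift, not the specific equation it solves. The price is that the integration by parts you invoke is not quite off the shelf: $|J|\phi_-w_\eta$ lies in $L^2\bigl(\mathbb{R}_+;V_i(0)\bigr)$ rather than $L^2\bigl(\mathbb{R}_+;H^1_0(\Omega_i(0))\bigr)$, so the pair $(|J|\phi_-w_\eta,\phi_-v)$ does not sit in a common Gelfand triple and the product rule $\tfrac{d}{dt}(p,q)_{L^2}=\langle\partial_tp,q\rangle_{H^{-1}\times H^1_0}+(p,\partial_tq)_{L^2}$ deserves a line of justification (for instance, verify it first for $q\in\phi_-(\mathcal D_0)$, where $q\in H^1\bigl(\mathbb{R}_+;H^1_0(\Omega_i(0))\bigr)$ with compact support in $t$ and the identity is routine, then pass to general $q\in\phi_-(U_i^0)$ by density and continuity of both sides). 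The paper's use of the heat equation sidesteps this issue entirely.
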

\begin{proof}
Let $\eta$ be an arbitrary, but fixed, element in $Z$. First we prove that $a_i(u_\eta,\cdot)$ can be extended to an element in $L^2_{H^{-1}(\Omega_i)}(\mathbb{R}_+)$, where $u_\eta=R_i\eta\in W_i$. To this end, consider $v\in \phi_-(U_i^0)$. As $|J_t|v\in \phi_-(U_i^0)$ by~\cref{lem:Jvtest} and $\phi_- u_\eta$ solves~\cref{eq:heateq}, we have
\begin{equation*}
\begin{aligned}
d_i(\phi_- u_\eta, v)&=-\int_{\mathbb{R}_+}\int_{\Omega_i(0)} \phi_{-t}u_\eta\,\partial_tv\,|J_t|\,\mathrm{d}x_0\mathrm{d}t\\ 
                    &=-\int_{\mathbb{R}_+}\int_{\Omega_i(0)} \phi_{-t}u_\eta\bigl(\partial_t(|J_t|v)-\partial_t(|J_t|)v\bigr)\mathrm{d}x_0\mathrm{d}t\\
&=\int_{\mathbb{R}_+}\int_{\Omega_i(0)} -\nabla (\phi_{-t}u_\eta)\cdot\nabla(|J_t|v)+\partial_t(|J_t|)\phi_{-t}u_\eta v\bigr)\mathrm{d}x_0\mathrm{d}t.
\end{aligned}
\end{equation*}
The assumptions on $J$  together with the bound  $\|u_\eta\|_{L^2_{H^1(\Omega_i)}(\mathbb{R}_+)}\leq \|R_i\eta\|_{W_i}\leq C\|\eta\|_Z$ yield that
\begin{equation*}
\begin{aligned}
|a_i(u_\eta, v)|&\leq|d_i(\phi_-u_\eta,\phi_-v)|+|c_i(u_\eta, v)|\\
&\leq\|J\|_{L^\infty(\mathbb{R}_+; W^{1,\infty}(B_r))}\|u_\eta\|_{L^2_{H^1(\Omega_i)}(\mathbb{R}_+)}\|v\|_{L^2_{H^1(\Omega_i)}(\mathbb{R}_+)}\\
&\quad+\|J\|_{W^{1,\infty}(\mathbb{R}_+; L^\infty(B_r))}\|u_\eta\|_{L^2_{L^2(\Omega_i)}(\mathbb{R}_+)}\|v\|_{L^2_{L^2(\Omega_i)}(\mathbb{R}_+)}\\
& \quad+C\|u_\eta\|_{L^2_{H^1(\Omega_i)}(\mathbb{R}_+)}\|v\|_{L^2_{H^1(\Omega_i)}(\mathbb{R}_+)}\\
&\leq C\|\eta\|_Z\|v\|_{L^2_{H^1(\Omega_i)}(\mathbb{R}_+)}
\end{aligned}
\end{equation*}
for every $v\in U_i^0$. As $\mathcal{D}_0\subset U_i^0$ is dense in $L^2_{H_0^1(\Omega_i)}(\mathbb{R}_+)$, by~\cref{lem:density1}, the above bound implies that $a_i(u_\eta,\cdot)$ can be extended to a $L^2_{H^{-1}(\Omega_i)}(\mathbb{R}_+)$-functional.

Secondly, we construct a solution to~\cref{eq:weak2} with the inhomogeneous boundary data~$\eta$. By~\cref{lem:homexist} there exists a solution $u_0\in W_i^0$ to the equation 
\begin{equation*}
a_i(u_0,v)=\langle g,v\rangle-a_i(u_\eta,v)\quad\text{for all }v\in U_i^0
\end{equation*}
such that $\phi_-u_0\in H^1_0\bigl(\mathbb{R}_+;H^{-1}(\Omega_i(0))\bigr)$ and 
\begin{equation*}
\begin{aligned}
\|u_0\|_{W_i} &\leq C \bigl(\|\phi_-u_0\|^2_{L^2(\mathbb{R}_+;H^1(\Omega_i(0))}+\|\partial_t(\phi_-u_0)\|^2_{L^2(\mathbb{R}_+;H^{-1}(\Omega_i(0)))}\bigr)^{1/2}\\
&\leq C (\|g\|_{L^2_{H^{-1}(\Omega_i)}(\mathbb{R}_+)}+\|a_i(u_\eta,\cdot)\|_{L^2_{H^{-1}(\Omega_i)}(\mathbb{R}_+)})\\
&\leq C (\|g\|_{L^2_{H^{-1}(\Omega_i)}(\mathbb{R}_+)}+\|\eta\|_Z).
\end{aligned}
\end{equation*}
Hence, $u=u_0+u_\eta\in W_i$ solves~\cref{eq:weak2} with $T_iu=0+\eta$, 
\begin{equation*}
\begin{gathered}
\phi_- u =\phi_- u_0+R_{i,0}\,\psi_-\eta\in H^1_0\bigl(\mathbb{R}_+;H^{-1}(\Omega_i(0))\bigr),\quad{and}\\[5pt]
\|u\|_{W_i} \leq \|u_0\|_{W_i} +\|R_i\eta\|_{W_i} \leq C (\|g\|_{L^2_{H^{-1}(\Omega_i)}(\mathbb{R}_+)}+\|\eta\|_Z)+ C \|\eta\|_Z,
\end{gathered}
\end{equation*}
which concludes the proof.
\end{proof}

Introduce the space
\begin{equation*}
\tilde{\mathcal{D}}_0=\bigl\{u\in L^2_{L^2(\Omega_i)}(\mathbb{R}_+): \phi_-u\in C^{\infty}_0\bigl(\mathbb{R}_+\times B_r\bigr)\big\}.
\end{equation*}
\begin{lemma}\label{lem:density2}
If~\cref{ass:phi} holds, then $\tilde{\mathcal{D}}_0$ is dense in both $H^{1/2}_{L^2(\Omega_i)}(\mathbb{R}_+)$ and $H^{1/2}_{(0,\cdot)\, L^2(\Omega_i)}(\mathbb{R}_+)$. 
\end{lemma}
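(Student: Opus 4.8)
The plan is to transfer the statement to the fixed reference domain $\Omega_i(0)$ and then run the tensor-product density argument used in the proof of~\cref{lem:density1}. Since $(L^2(\Omega_i),\phi)$ is compatible by~\cref{lem:H1Compatibility}, the construction of~\cref{sec:abs} applies with $Y=H^{1/2}(\mathbb{R}_+)$ and with $Y=H^{1/2}_{(0,\cdot)}(\mathbb{R}_+)$, so that the maps
\begin{equation*}
\phi\colon H^{1/2}\bigl(\mathbb{R}_+;L^2(\Omega_i(0))\bigr)\to H^{1/2}_{L^2(\Omega_i)}(\mathbb{R}_+)\quad\text{and}\quad\phi\colon H^{1/2}_{(0,\cdot)}\bigl(\mathbb{R}_+;L^2(\Omega_i(0))\bigr)\to H^{1/2}_{(0,\cdot)\,L^2(\Omega_i)}(\mathbb{R}_+)
\end{equation*}
are isometric isomorphisms. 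It is therefore enough to show that $\phi_-(\tilde{\mathcal{D}}_0)$ is dense in $H^{1/2}\bigl(\mathbb{R}_+;L^2(\Omega_i(0))\bigr)$ and in $H^{1/2}_{(0,\cdot)}\bigl(\mathbb{R}_+;L^2(\Omega_i(0))\bigr)$.

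Next, expanding the Gagliardo seminorm over an orthonormal basis of $H:=L^2(\Omega_i(0))$ gives the isometric identifications
\begin{equation*}
H^{1/2}(\mathbb{R}_+;H)\cong H^{1/2}(\mathbb{R}_+)\tensor H\quad\text{and}\quad H^{1/2}_{(0,\cdot)}(\mathbb{R}_+;H)\cong H^{1/2}_{(0,\cdot)}(\mathbb{R}_+)\tensor H,
\end{equation*}
where $\tensor$ denotes the Hilbert tensor product; this is the fractional counterpart of the identification invoked in~\cref{lem:density1} (cf.~\cite[Section~4]{EEEH24}). With $C^\infty_1:=C^\infty_0(\mathbb{R}_+)$, i.e.\ smooth functions with compact support in the \emph{open} half-line $\mathbb{R}_+=(0,\infty)$, and $C^\infty_2:=\{\psi|_{\Omega_i(0)}:\psi\in C^\infty_0(B_r)\}$, one checks directly that $C^\infty_1\otimes C^\infty_2\subseteq\phi_-(\tilde{\mathcal{D}}_0)$, since $f\otimes\psi\in C^\infty_0(\mathbb{R}_+\times B_r)$ for $f\in C^\infty_1$ and $\psi\in C^\infty_0(B_r)$. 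As $C^\infty_2$ is dense in $L^2(\Omega_i(0))$ and the algebraic tensor product of dense subspaces is dense in the Hilbert tensor product, the argument reduces to two scalar-in-time density statements: that $C^\infty_0(\mathbb{R}_+)$ is dense in $H^{1/2}_{(0,\cdot)}(\mathbb{R}_+)$, and that $C^\infty_0(\mathbb{R}_+)$ is dense in $H^{1/2}(\mathbb{R}_+)$.

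The first of these is a standard property of the temporal Lions--Magenes space $H^{1/2}_{(0,\cdot)}(\mathbb{R}_+)$; it follows from the density of $C^\infty_0(\mathbb{R})$ in $H^{1/2}(\mathbb{R})$ together with a cut-off near $t=0$, using that the point $\{0\}$ carries no $H^{1/2}$-capacity. The second---which is the step I expect to demand the most care---is the borderline identity $H^{1/2}_0(\mathbb{R}_+)=H^{1/2}(\mathbb{R}_+)$: functions vanishing near $t=0$ remain dense in $H^{1/2}(\mathbb{R}_+)$ precisely because $s=1/2$ is the exponent at which the pointwise trace at $t=0$ ceases to exist, so that no boundary obstruction arises (equivalently, $H^{1/2}_{(0,\cdot)}(\mathbb{R}_+)$ is dense in $H^{1/2}(\mathbb{R}_+)$). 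This is in contrast with the $H^1$-in-time situation of~\cref{lem:density1}, where only functions that are smooth up to, but in general nonzero at, $t=0$ are dense; this is exactly why $\tilde{\mathcal{D}}_0$ is defined using $C^\infty_0(\mathbb{R}_+\times B_r)$ rather than $C^\infty_0(\mathbb{R}\times B_r)$. Granting these two facts, $C^\infty_1\otimes C^\infty_2$ is dense in both target spaces, and the lemma follows.
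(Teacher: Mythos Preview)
Your proof is correct and follows essentially the same route as the paper: reduce via the isometry $\phi$ to the reference cylinder, invoke the Hilbert tensor identifications $H^{1/2}(\mathbb{R}_+;H)\cong H^{1/2}(\mathbb{R}_+)\tensor H$ and $H^{1/2}_{(0,\cdot)}(\mathbb{R}_+;H)\cong H^{1/2}_{(0,\cdot)}(\mathbb{R}_+)\tensor H$, and then appeal to the two scalar density facts $C^\infty_0(\mathbb{R}_+)\subset H^{1/2}(\mathbb{R}_+)$ and $C^\infty_0(\mathbb{R}_+)\subset H^{1/2}_{(0,\cdot)}(\mathbb{R}_+)$. The only difference is in how these scalar facts are justified: the paper cites \cite[Theorem~11.1]{lionsmagenes1} directly for the first and derives the second from the interpolation identity $H^{1/2}_{(0,\cdot)}(\mathbb{R}_+)\cong[H^1_0(\mathbb{R}_+),L^2(\mathbb{R}_+)]_{1/2}$, whereas you sketch a capacity/cutoff argument---both are standard and valid, and your remark that the $H^{1/2}$ case is the more delicate one is apt, though in the paper it is dispatched by a single citation.
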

\begin{proof}
By \cite[Theorem 11.1]{lionsmagenes1}, one has that $C^\infty_0(\mathbb{R}_+)$ is dense in both $L^2(\mathbb{R}_+)$ and $H^{1/2}(\mathbb{R}_+)$.
Then the interpolation 
\begin{equation*}
H^{1/2}_{(0,\cdot)}(\mathbb{R}_+)\cong [H^1_0(\mathbb{R}_+), L^2(\mathbb{R}_+)]_{\frac12};
\end{equation*}
see~\cite[Theorem 11.7, Remark 2.6] {lionsmagenes1}, implies that $H^1_0(\mathbb{R}_+)$ is dense in $H_{(0,\cdot)}^{1/2}(\mathbb{R}_+)$. By definition of $H^1_0(\mathbb{R}_+)$ and~\cite[Proposition 2.3]{lionsmagenes1}, one obtains that $C^\infty_0(\mathbb{R}_+)$ is also dense in $H_{(0,\cdot)}^{1/2}(\mathbb{R}_+)$. The density of $\tilde{\mathcal{D}}_{0}$ in $H^{1/2}_{L^2(\Omega_i)}(\mathbb{R}_+)$ and $H^{1/2}_{(0,\cdot)\, L^2(\Omega_i)}(\mathbb{R}_+)$ then both follow by the very same tensor argument as in~\cref{lem:density1}.
\end{proof}

\begin{lemma}\label{lem:Jmult}
If~\cref{ass:phi,ass:eqdata,ass:tech} hold, then the map $v\mapsto |J|v$ is continuous on $H_{(0,\cdot)}^{1/2}\bigl(\mathbb{R}_+;L^2(\Omega_i(0))\bigr)$.
\end{lemma}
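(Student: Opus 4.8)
The plan is to reduce everything to a single estimate on the Gagliardo seminorm in time with values in $L^2(\Omega_i(0))$, and then handle the $L^2$-in-time part and the extension-by-zero encoded in the $(0,\cdot)$-subscript separately. More precisely, write $w=|J|v$ and recall that the norm on $H_{(0,\cdot)}^{1/2}\bigl(\mathbb{R}_+;L^2(\Omega_i(0))\bigr)$ is by definition the norm of the zero-extension $e_{\mathbb{R}}w$ in $H^{1/2}\bigl(\mathbb{R};L^2(\Omega_i(0))\bigr)$, which splits as $\|e_{\mathbb{R}}w\|_{L^2(\mathbb{R};L^2(\Omega_i(0)))}^2+|e_{\mathbb{R}}w|_{H^{1/2}(\mathbb{R};L^2(\Omega_i(0)))}^2$. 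The $L^2$ term is immediate from \cref{ass:tech} since $|J|\in L^\infty(\mathbb{R}_+;L^\infty(B_r))$, giving $\|e_{\mathbb{R}}(|J|v)\|_{L^2}\le \||J|\|_{L^\infty}\|e_{\mathbb{R}}v\|_{L^2}$. For the seminorm I would first observe that, since $|J|(0,\cdot)$ is a well-defined $L^\infty(B_r)$-function (as $|J|\in L^\infty(\mathbb{R}_+;W^{1,\infty}(B_r))$ by \cref{ass:tech} and $J$ is even continuous in time by \cref{eq:diffOfJ} under \cref{ass:phi,ass:eqdata}), the zero-extension commutes suitably: one extends $|J|$ to all of $\mathbb{R}$ (say by reflection or by the constant value $|J_0|$ on $\mathbb{R}_-$), call it $\widehat{J}$, with $\widehat{J}\in L^\infty(\mathbb{R};W^{1,\infty}(B_r))$ and $\partial_t\widehat{J}\in L^\infty(\mathbb{R};L^\infty(B_r))$, and then $e_{\mathbb{R}}(|J|v)=\widehat{J}\,e_{\mathbb{R}}v$.

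The core estimate is then the pointwise-in-$(t,\tau)$ decomposition
\begin{equation*}
\widehat{J}(\tau)e_{\mathbb{R}}v(\tau)-\widehat{J}(t)e_{\mathbb{R}}v(t)
=\widehat{J}(\tau)\bigl(e_{\mathbb{R}}v(\tau)-e_{\mathbb{R}}v(t)\bigr)+\bigl(\widehat{J}(\tau)-\widehat{J}(t)\bigr)e_{\mathbb{R}}v(t),
\end{equation*}
taking $L^2(\Omega_i(0))$-norms, squaring, and using $(a+b)^2\le 2a^2+2b^2$. The first resulting term is bounded by $2\|\widehat{J}\|_{L^\infty(\mathbb{R};L^\infty(B_r))}^2\,|e_{\mathbb{R}}v|_{H^{1/2}(\mathbb{R};L^2(\Omega_i(0)))}^2$ after dividing by $|\tau-t|^{2}$ and integrating. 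For the second term, one uses the Lipschitz-in-time bound $\|\widehat{J}(\tau)-\widehat{J}(t)\|_{L^\infty(B_r)}\le \|\partial_t\widehat{J}\|_{L^\infty(\mathbb{R};L^\infty(B_r))}|\tau-t|$, so that after division by $|\tau-t|^{2}$ the kernel becomes $|\tau-t|^{-1}$; this is \emph{not} integrable over all of $\mathbb{R}^2$, so one must localize: split the $\tau$-integral into $|\tau-t|\le 1$ and $|\tau-t|>1$. On $|\tau-t|>1$ the kernel is bounded by $1$, giving a contribution controlled by $\|\partial_t\widehat J\|_{L^\infty}^2\|e_{\mathbb{R}}v\|_{L^2(\mathbb{R};L^2)}^2$; on $|\tau-t|\le 1$ the kernel $|\tau-t|^{-1}$ \emph{is} integrable in the inner variable, yielding again a multiple of $\|e_{\mathbb{R}}v\|_{L^2}^2$. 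Summing, one gets
\begin{equation*}
|e_{\mathbb{R}}(|J|v)|_{H^{1/2}(\mathbb{R};L^2(\Omega_i(0)))}^2\le C\Bigl(|e_{\mathbb{R}}v|_{H^{1/2}(\mathbb{R};L^2(\Omega_i(0)))}^2+\|e_{\mathbb{R}}v\|_{L^2(\mathbb{R};L^2(\Omega_i(0)))}^2\Bigr),
\end{equation*}
which together with the $L^2$ bound gives $\||J|v\|_{H_{(0,\cdot)}^{1/2}(\mathbb{R}_+;L^2(\Omega_i(0)))}\le C\|v\|_{H_{(0,\cdot)}^{1/2}(\mathbb{R}_+;L^2(\Omega_i(0)))}$ with $C$ depending only on $\|J\|_{L^\infty(\mathbb{R}_+;W^{1,\infty}(B_r))}$. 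An alternative route, which may be cleaner to write, is to verify the claim first on the dense subset $\tilde{\mathcal{D}}_0$ from \cref{lem:density2} (where all manipulations are classical) and then pass to the limit using the just-derived boundedness; linearity of $v\mapsto|J|v$ and density then extend continuity to all of $H_{(0,\cdot)}^{1/2}$.

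The main obstacle is precisely the non-integrability of the Gagliardo kernel against a Lipschitz-in-time factor at large $|\tau-t|$: one cannot simply bound the commutator term by the seminorm, and the fix forces the near/far splitting and the appearance of the $L^2$-in-time norm on the right-hand side — which is fine since it is part of the $H^{1/2}$-norm, but it is the step where the semi-infinite interval $\mathbb{R}_+$ (rather than a bounded interval) genuinely matters and where \cref{ass:tech} on $W^{1,\infty}$-regularity of $|J|$ in space-time is used in an essential way. A secondary technical point is checking that extension by zero really does commute with multiplication by $|J|$ in the sense needed, i.e.\ that no spurious boundary contribution at $t=0$ appears; this is where one uses that multiplication by the bounded function $\widehat{J}$ preserves the closed subspace $H_{(0,\cdot)}^{1/2}(\mathbb{R}_+)=\{u:e_{\mathbb{R}}u\in H^{1/2}(\mathbb{R})\}$, which follows from the estimate above applied with $v$ replaced by $e_{\mathbb{R}}v$.
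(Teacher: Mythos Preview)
Your overall strategy---decompose the product difference, use the $W^{1,\infty}$-in-time regularity of $|J|$ on the commutator term, and split into near and far regions---is exactly what the paper does. But there is a computational slip that, as written, breaks both halves of your near/far argument. After \emph{squaring} the Lipschitz bound $\|\widehat J(\tau)-\widehat J(t)\|_{L^\infty}\le L|\tau-t|$ and dividing by $|\tau-t|^{2}$, the kernel is the constant $L^2$, not $|\tau-t|^{-1}$. With your stated kernel $|\tau-t|^{-1}$: on $|\tau-t|\le1$ the inner integral $\int_{-1}^{1}|\sigma|^{-1}\,d\sigma$ diverges, and on $|\tau-t|>1$ bounding the kernel by $1$ still leaves a divergent $\int_{|\sigma|>1}1\,d\sigma$. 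The correct bookkeeping is: on $|\tau-t|\le1$ use Lipschitz (constant kernel, inner integral over a bounded interval is finite); on $|\tau-t|>1$ do \emph{not} use Lipschitz but instead $\|\widehat J(\tau)-\widehat J(t)\|_{L^\infty}\le 2\|\widehat J\|_{L^\infty}$, so the kernel stays $|\tau-t|^{-2}$ and the inner integral converges. This is precisely the paper's split into $I_1$ (far, $L^\infty$ bound) versus $I_2,I_3$ (near, Lipschitz bound), and with that correction your argument goes through.

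A cosmetic difference: rather than extending $|J|$ to $\widehat J$ on $\mathbb{R}$, the paper expands $|e_{\mathbb{R}}(|J|v)|_{H^{1/2}(\mathbb{R})}^{2}$ directly as the double integral over $\mathbb{R}_+\times\mathbb{R}_+$ plus the cross term $2\int_{\mathbb{R}_+}t^{-1}\||J_t|v(t)\|_{L^2}^{2}\,dt$ coming from $\tau<0$ or $t<0$, which is bounded by $\|J\|_{L^\infty}^{2}$ times the corresponding term for $v$. Your extension trick $e_{\mathbb{R}}(|J|v)=\widehat J\,e_{\mathbb{R}}v$ is a clean equivalent and, contrary to your worry, introduces no boundary issue at $t=0$ since the identity holds pointwise.
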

\begin{proof}
Under the assumptions it is clear that $|J|v\in L^2\bigl(\mathbb{R}_+;L^2(\Omega_i(0))\bigr)$ for every $v\in H_{(0,\cdot)}^{1/2}\bigl(\mathbb{R}_+;L^2(\Omega_i(0))\bigr)$, and
\begin{equation*}
\begin{aligned}
&\||J|v\|^2_{H_{(0,\cdot)}^{1/2}(\mathbb{R}_+;L^2(\Omega_i(0)))}
=\|e_{\mathbb{R}}(|J|v)\|^2_{H^{1/2}(\mathbb{R};L^2(\Omega_i(0)))}\\
&=\int_{\mathbb{R}_+}\int_{\mathbb{R}_+}\frac{ \||J_\tau|v(\tau)- |J_t|v(t)\|^2_{L^2(\Omega_i(0))} }{(\tau-t)^2}\,\mathrm{d}\tau\mathrm{d}t\\
&\quad+2\int_{\mathbb{R}_+}\frac{ \||J_t|v(t)\|^2_{L^2(\Omega_i(0))} }{t}\,\mathrm{d}t+\||J|v\|^2_{L^2(\mathbb{R}_+;L^2(\Omega_i(0)))}=K_1+K_2+K_3.
\end{aligned}
\end{equation*}
The integrals $K_2, K_3$ are trivially bounded by $\|v\|_{H_{(0,\cdot)}^{1/2}(\mathbb{R}_+;L^2(\Omega_i(0)))}$ as $|J|$ is an element in $L^\infty\bigl(\mathbb{R}_+; L^\infty(\Omega_i(0))\bigr)$. 

Next, denote the integrand of $K_1$ by $p(\tau,t)$. Then, as $p(\tau,t)=p(t,\tau)$, one has the equality 
\begin{equation*}
\begin{aligned}
K_1&=2\int_{\mathbb{R}_+}\int_0^tp(\tau,t)\,\mathrm{d}\tau\mathrm{d}t
=2\left(\int_1^\infty\int_0^{t-1}+\int_1^\infty\int_{t-1}^t+\int_0^1\int_0^{t}\right)p(\tau,t)\,\mathrm{d}\tau\mathrm{d}t\\
&=I_1+I_2+I_3.
\end{aligned}
\end{equation*}
On $I_1$'s domain of integration one has that $(\tau-t)^{-2}\leq 1$, i.e.,
\begin{equation*}
I_1\leq C\|J\|^2_{L^\infty(\mathbb{R}_+; L^\infty(\Omega_i(0)))}\|v\|^2_{L^2(\mathbb{R}_+;L^2(\Omega_i(0)))}.
\end{equation*}
The assumption $|J|\in W^{1,\infty}\bigl(\mathbb{R}_+; L^\infty(\Omega_i(0))\bigr)$ implies that
\begin{equation*}
\begin{aligned}
\||J_\tau|v(\tau) - |J_t|v(t)\|_{L^2(\Omega_i(0))}&=\|(|J_\tau|-|J_t|)v(t)- |J_\tau|(v(\tau)-v(t))\|_{L^2(\Omega_i(0))}\\
&\leq \|J\|_{W^{1,\infty}(\mathbb{R}_+; L^\infty(\Omega_i(0)))}(\tau-t)\|v(t)\|_{L^2(\Omega_i(0))}\\
&\quad+\|J\|_{L^\infty(\mathbb{R}_+; L^\infty(\Omega_i(0)))}\|v(\tau)-v(t)\|_{L^2(\Omega_i(0))}.
\end{aligned}
\end{equation*}
for a.e.\ $\tau,t\in \mathbb{R}_+$. Employing the above bound to the integrand $p$ yields
\begin{equation*}
\begin{aligned}
I_2&\leq C\|J\|^2_{W^{1,\infty}(\mathbb{R}_+; L^\infty(\Omega_i(0)))}\int_1^\infty\left(\int_{t-1}^t1\,\mathrm{d}\tau\right)\|v(t)\|^2_{L^2(\Omega_i(0))}\,\mathrm{d}t\\
&\quad+C\|J\|^2_{L^\infty(\mathbb{R}_+; L^\infty(\Omega_i(0)))}\int_1^\infty\int_{t-1}^t\frac{ \|v(\tau)-v(t)\|^2_{L^2(\Omega_i(0))}}{(\tau-t)^2} \,\mathrm{d}\tau\mathrm{d}t\\
&\leq C\|v\|^2_{H^{1/2}(\mathbb{R}_+;L^2(\Omega_i(0)))}.
\end{aligned}
\end{equation*}
The final integral $I_3$ can be bounded in the same fashion as $I_2$. 
\end{proof}

\begin{lemma}\label{lem:dextend}
If~\cref{ass:phi,ass:eqdata,ass:tech} hold, then the bilinear form 
\begin{equation*}
d_i\bigl(\phi_-(\cdot),\phi_-(\cdot)\bigr)\colon H^{1/2}_{(0,\cdot)\, L^2(\Omega_i)}(\mathbb{R}_+)\times H^1_{L^2(\Omega_i)}(\mathbb{R}_+)\to\mathbb{R}
\end{equation*} can be continuously extended to $H^{1/2}_{(0,\cdot)\, L^2(\Omega_i)}(\mathbb{R}_+)\times H^{1/2}_{L^2(\Omega_i)}(\mathbb{R}_+)$, and  the extension satisfies 
\begin{equation}\label{eq:dmon2}
d_i(\phi_{-}u,\phi_{-}u)\geq \int_{\mathbb{R}_+}\int_{\Omega_i(t)}(1/2\nabla \cdot \mathbf{w})u^2\, \mathrm{d}x_t\mathrm{d}t
\end{equation}
for all $u\in H^{1/2}_{(0,\cdot)\, L^2(\Omega_i)}(\mathbb{R}_+)$.
\end{lemma}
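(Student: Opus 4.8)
The plan is to split the statement into its two halves: (i) the continuous extendability of $d_i(\phi_-(\cdot),\phi_-(\cdot))$ to $H^{1/2}_{(0,\cdot)\,L^2(\Omega_i)}(\mathbb{R}_+)\times H^{1/2}_{L^2(\Omega_i)}(\mathbb{R}_+)$, and (ii) the lower bound~\cref{eq:dmon2} for that extension. I would obtain (i) by interpolating between two elementary endpoint bounds, and then obtain (ii) by reducing, via density together with the continuity from (i), to $u$ in the smooth class $\tilde{\mathcal{D}}_0$, where~\cref{lem:dmon} applies directly.

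For (i), write $p=\phi_-u$, $q=\phi_-v$. On the one hand, for $v$ with $q\in H^1\bigl(\mathbb{R}_+;L^2(\Omega_i(0))\bigr)$ and $u$ with $p\in L^2\bigl(\mathbb{R}_+;L^2(\Omega_i(0))\bigr)$, the estimate $|d_i(\phi_-u,\phi_-v)|\le\||J|p\|_{L^2(\mathbb{R}_+;L^2(\Omega_i(0)))}\|\partial_t q\|_{L^2(\mathbb{R}_+;L^2(\Omega_i(0)))}\le C\|u\|_{L^2_{L^2(\Omega_i)}(\mathbb{R}_+)}\|v\|_{H^1_{L^2(\Omega_i)}(\mathbb{R}_+)}$ is immediate from $|J|\in L^\infty\bigl(\mathbb{R}_+;L^\infty(B_r)\bigr)$. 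On the other hand, for $u$ with $p$ in the pullback of $H^1_0\bigl(\mathbb{R}_+;L^2(\Omega_i(0))\bigr)$ --- so that $p(0)=0$, while elements of $H^1\bigl(\mathbb{R}_+;L^2(\Omega_i(0))\bigr)$ decay at infinity --- integration by parts in time, using also $|J|,\partial_t|J|\in L^\infty\bigl(\mathbb{R}_+;L^\infty(B_r)\bigr)$ (which follow from~\cref{ass:eqdata,ass:tech}), gives $d_i(\phi_-u,\phi_-v)=\int_{\mathbb{R}_+}\int_{\Omega_i(0)}\bigl(\partial_t p\,|J|+p\,\partial_t|J|\bigr)q\,\mathrm{d}x_0\mathrm{d}t$ and hence $|d_i(\phi_-u,\phi_-v)|\le C\|p\|_{H^1(\mathbb{R}_+;L^2(\Omega_i(0)))}\|v\|_{L^2_{L^2(\Omega_i)}(\mathbb{R}_+)}$. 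These are two bounds for one and the same bilinear form and they agree on the common subspace, so the bilinear interpolation theorem yields boundedness on $\bigl[L^2_{L^2(\Omega_i)}(\mathbb{R}_+),\,\phi\bigl(H^1_0(\mathbb{R}_+;L^2(\Omega_i(0)))\bigr)\bigr]_{1/2}\times\bigl[H^1_{L^2(\Omega_i)}(\mathbb{R}_+),\,L^2_{L^2(\Omega_i)}(\mathbb{R}_+)\bigr]_{1/2}$. Since $\phi$ acts as an isometric isomorphism on all the Sobolev--Bochner spaces involved (\cref{sec:abs}), and $[L^2(\mathbb{R}_+;L^2(\Omega_i(0))),H^1_0(\mathbb{R}_+;L^2(\Omega_i(0)))]_{1/2}\cong H^{1/2}_{(0,\cdot)}(\mathbb{R}_+;L^2(\Omega_i(0)))$ by the Lions--Magenes interpolation identity already used in~\cref{lem:density2} (likewise $[H^1(\mathbb{R}_+;L^2(\Omega_i(0))),L^2(\mathbb{R}_+;L^2(\Omega_i(0)))]_{1/2}\cong H^{1/2}(\mathbb{R}_+;L^2(\Omega_i(0)))$), these two interpolation spaces are exactly $H^{1/2}_{(0,\cdot)\,L^2(\Omega_i)}(\mathbb{R}_+)$ and $H^{1/2}_{L^2(\Omega_i)}(\mathbb{R}_+)$. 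This gives the claimed continuity, and the extension is unique because $H^1_{L^2(\Omega_i)}(\mathbb{R}_+)$ contains $\tilde{\mathcal{D}}_0$, which is dense in $H^{1/2}_{L^2(\Omega_i)}(\mathbb{R}_+)$ by~\cref{lem:density2}. (Alternatively, one writes $d_i(\phi_-u,\phi_-v)=-\int_{\mathbb{R}_+}\int_{\Omega_i(0)}(|J|p)\,\partial_t q\,\mathrm{d}x_0\mathrm{d}t$, absorbs the weight using~\cref{lem:Jmult}, and then invokes the stationary-domain estimate of~\cite{costabel90}.)

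For (ii), observe that both sides of~\cref{eq:dmon2} are continuous functionals of $u\in H^{1/2}_{(0,\cdot)\,L^2(\Omega_i)}(\mathbb{R}_+)$: the left-hand side by the extension just constructed, and the right-hand side because $\int_{\mathbb{R}_+}\int_{\Omega_i(t)}(1/2\,\nabla\cdot\mathbf{w})u^2\,\mathrm{d}x_t\mathrm{d}t=1/2\int_{\mathbb{R}_+}\int_{\Omega_i(0)}\phi_{-t}(\nabla\cdot\mathbf{w})\,|J_t|\,(\phi_{-t}u)^2\,\mathrm{d}x_0\mathrm{d}t$ with $\phi_-(\nabla\cdot\mathbf{w})\,|J|\in L^\infty\bigl(\mathbb{R}_+;L^\infty(\Omega_i(0))\bigr)$, so this term is in fact continuous already with respect to the weaker $L^2_{L^2(\Omega_i)}(\mathbb{R}_+)$-norm. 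For $u\in\tilde{\mathcal{D}}_0$ one has $u\in H^1_{L^2(\Omega_i)}(\mathbb{R}_+)$, so~\cref{eq:dmon2} holds by~\cref{lem:dmon} applied with $M=\Omega_i$. Since $\tilde{\mathcal{D}}_0$ is dense in $H^{1/2}_{(0,\cdot)\,L^2(\Omega_i)}(\mathbb{R}_+)$ by~\cref{lem:density2}, the inequality passes to the whole space.

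I expect the technical heart to be the continuity estimate, and within it the bookkeeping for the interpolation: one must make sure the two endpoint bounds are bounds for the same bilinear form (they differ in form but coincide on the overlap, which is where integration by parts is licit), and correctly identify the two real interpolation spaces. The Lions--Magenes condition encoded by the subscript $(0,\cdot)$ on the first argument is essential rather than cosmetic: it is precisely what makes the integration by parts in time leave no boundary contribution at $t=0$, a contribution of the form $\int_{\Omega_i(0)}p(0)\,|J_0|\,q(0)\,\mathrm{d}x_0$ which would involve the temporal trace $q(0)$ and so could not be controlled once $v$ is only assumed to lie in $H^{1/2}_{L^2(\Omega_i)}(\mathbb{R}_+)$. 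The remaining steps --- the density-plus-continuity extensions and the reduction of the lower bound to $\tilde{\mathcal{D}}_0$ --- are routine.
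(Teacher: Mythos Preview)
Your proof is correct; part~(ii) coincides with the paper's argument verbatim (density of $\tilde{\mathcal{D}}_0$ plus~\cref{lem:dmon}), while part~(i) takes a genuinely different route. The paper does not interpolate. Instead it extends $p$ by zero and $q$ evenly in time to all of $\mathbb{R}$, writes
\[
d_i(p,q)=-\int_{\mathbb{R}}\int_{\Omega_i(0)} e_{\mathbb{R}}(|J|p)\,\partial_t(e_{\mathrm{even}}q)\,\mathrm{d}x_0\mathrm{d}t,
\]
invokes the $H^{1/2}(\mathbb{R})$--$H^{1/2}(\mathbb{R})$ duality bound for $\partial_t$ on the full line (from~\cite{EEEH24}), and then absorbs the weight $|J|$ using~\cref{lem:Jmult}. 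Your parenthetical alternative is exactly this argument.

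Your main route via bilinear interpolation between the endpoints $L^2\times H^1$ and $H^1_0\times L^2$ is legitimate and arguably more conceptual; it needs only $|J|,\partial_t|J|\in L^\infty$ and so bypasses~\cref{lem:Jmult} entirely. The paper's route, on the other hand, is more explicit about where the Lions--Magenes subscript enters (namely, $e_{\mathbb{R}}$ carries $H^{1/2}_{(0,\cdot)}(\mathbb{R}_+)$ into $H^{1/2}(\mathbb{R})$), which is the analogue of your observation that $p(0)=0$ kills the boundary term. One small caution on your side: the ``bilinear interpolation theorem'' you appeal to must be read in the form where the bilinear map is initially defined on the intersection $(H^1_0)\times(H^1)$ and extends continuously to each endpoint pair, or equivalently as linear operator interpolation $A\colon p\mapsto d_i(p,\cdot)$ mapping $L^2\to (H^1)^*$ and $H^1_0\to (L^2)^*$, followed by duality $[(H^1)^*,(L^2)^*]_{1/2}\cong (H^{1/2})^*$. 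As stated your sentence is slightly loose, but the content is right.
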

\begin{proof}
First observe the characterizations (cf.~\cite[Section~2]{EEEH25}) 
\begin{equation*}
\begin{aligned}
H^{1/2}\bigl(\mathbb{R}_+; L^2(\Omega_i(0))\bigr)&=\{u\in L^2(\mathbb{R}_+;  L^2(\Omega_i(0))): e_{\text{even}} u\in H^{1/2}\bigl(\mathbb{R}; L^2(\Omega_i(0))\bigr)\},\\
H^{1/2}_{(0,\cdot)}\bigl(\mathbb{R}_+; L^2(\Omega_i(0))\bigr)&=\{u\in L^2(\mathbb{R}_+;  L^2(\Omega_i(0))): e_\mathbb{R} u\in H^{1/2}\bigl(\mathbb{R}; L^2(\Omega_i(0))\bigr)\},
\end{aligned}	   
\end{equation*}
together with the equivalent norms
\begin{equation*}
\begin{aligned}	
\|u\|_{H^{1/2}(\mathbb{R}_+; L^2(\Omega_i(0)))}&=\|e_{\text{even}}u\|_{H^{1/2}(\mathbb{R}; L^2(\Omega_i(0))},\\
\|u\|_{H^{1/2}_{(0,\cdot)}(\mathbb{R}_+; L^2(\Omega_i(0)))}&=\|e_\mathbb{R} u\|_{H^{1/2}(\mathbb{R}; L^2(\Omega_i(0)))}.
\end{aligned}	
\end{equation*}
Here, the operators $e_\text{even},e_\mathbb{R}:L^2\bigl(\mathbb{R}_+;  L^2(\Omega_i(0))\bigr)\to L^2\bigl(\mathbb{R};  L^2(\Omega_i(0))\bigr)$ denote the even extension and the extension by zero, respectively, in the temporal direction.

The bilinear form $d_i(\cdot,\cdot):H^{1/2}_{(0,\cdot)}\bigl(\mathbb{R}_+;L^2(\Omega_i(0))\bigr)\times\phi_-(\tilde{\mathcal{D}}_{0})\to \mathbb{R}$ then satisfies the bound 
\begin{equation}\label{eq:dextend}
\begin{aligned}	
|d_i(u,v)|&=\left|\int_{\mathbb{R}_+}\int_{\Omega_i(0)}u \partial_tv |J_t|\,\mathrm{d}x_0\mathrm{d}t\right| =\left|\int_\mathbb{R}\int_{\Omega_i(0)} e_\mathbb{R} (|J_t| u)\partial_t(e_{\text{even}}v)\,\mathrm{d}x_0\mathrm{d}t\right|\\
            &\leq C\|e_\mathbb{R} (|J_t| u)\|_{H^{1/2}(\mathbb{R}, L^2(\Omega_i(0)))}\|e_{\text{even}}v\|_{H^{1/2}(\mathbb{R}, L^2(\Omega_i(0)))}\\
            &\leq C\|u\|_{H_{(0,\cdot)}^{1/2}(\mathbb{R}_+; L^2(\Omega_i(0)))}\|v\|_{H^{1/2}(\mathbb{R}_+; L^2(\Omega_i(0)))},
\end{aligned}	
\end{equation}
where the first inequality follows as in~\cite[Section~5]{EEEH24} and the second one holds due to~\cref{lem:Jmult}. 

By~\cref{lem:density2}, the set $\phi_-(\tilde{\mathcal{D}}_{0})$ is dense in $H^{1/2}\bigl(\mathbb{R}_+;L^2(\Omega_i(0))\bigr)$. Hence,~\cref{eq:dextend} yields that~$d_i$ can be continuously extended to $H^{1/2}_{(0,\cdot)}\bigl(\mathbb{R}_+;L^2(\Omega_i(0))\bigr)\times H^{1/2}\bigl(\mathbb{R}_+;L^2(\Omega_i(0))\bigr)$, and 
\begin{equation*}
d_i\bigl(\phi_-(\cdot),\phi_-(\cdot)\bigr)\colon H^{1/2}_{(0,\cdot)\, L^2(\Omega_i)}(\mathbb{R}_+)\times H^{1/2}_{L^2(\Omega_i)}(\mathbb{R}_+)\to\mathbb{R}
\end{equation*}
is then also a well defined, bounded bilinear form.

Finally, applying~\cref{lem:dmon} to an element $u\in \tilde{\mathcal{D}}_{0}\subset H^1_{L^2(\Omega_i)}(\mathbb{R}_+)$ and observing that~$\tilde{\mathcal{D}}_{0}$ is dense in~$H^{1/2}_{(0,\cdot)\, L^2(\Omega_i)}(\mathbb{R}_+)$, see~\cref{lem:density2}, yields the lower bound~\cref{eq:dmon2}.
\end{proof}

\begin{lemma}\label{lem:density3}
If~\cref{ass:phi} holds, then $U_i$ and $U_i^0$ are dense in~$\tilde{W}_i$ and~$\tilde{W}_i^0$, respectively. 
\end{lemma}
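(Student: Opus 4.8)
The plan is to reduce the statement, via the isomorphism $\phi$, to a density result for ordinary Sobolev--Bochner spaces on $\mathbb{R}_+$ with a \emph{fixed} spatial space, and then to prove the latter by an extension--mollification--restriction argument carried out purely in the time variable, so that the spatial boundary condition encoded in $V_i(0)$ (resp.\ $H^1_0(\Omega_i(0))$) is left untouched. Concretely, by \cref{sec:abs} and the concluding remark of \cref{sec:comp}, $\phi$ is an isomorphism with equivalent norms from
\begin{equation*}
\phi_-(\tilde{W}_i)=H^{1/2}\bigl(\mathbb{R}_+;L^2(\Omega_i(0))\bigr)\cap L^2\bigl(\mathbb{R}_+;V_i(0)\bigr)
\end{equation*}
onto $\tilde{W}_i$, and it maps the subspace $\phi_-(U_i)=H^1\bigl(\mathbb{R}_+;L^2(\Omega_i(0))\bigr)\cap L^2\bigl(\mathbb{R}_+;V_i(0)\bigr)$ onto $U_i$. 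Hence it suffices to show that $H^1(\mathbb{R}_+;H)\cap L^2(\mathbb{R}_+;V)$ is dense in $H^{1/2}(\mathbb{R}_+;H)\cap L^2(\mathbb{R}_+;V)$ with $H=L^2(\Omega_i(0))$ and $V=V_i(0)$; taking $V=H^1_0(\Omega_i(0))$ instead gives the statement for $U_i^0$ and $\tilde{W}_i^0$ verbatim.

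For this reduced statement, fix $u\in H^{1/2}(\mathbb{R}_+;H)\cap L^2(\mathbb{R}_+;V)$ and let $\bar{u}=e_{\text{even}}u$ be its even extension in time. By the characterization of $H^{1/2}(\mathbb{R}_+;L^2(\Omega_i(0)))$ recalled in the proof of \cref{lem:dextend} one has $\bar{u}\in H^{1/2}(\mathbb{R};H)$ with $\|\bar{u}\|_{H^{1/2}(\mathbb{R};H)}\le C\|u\|_{H^{1/2}(\mathbb{R}_+;H)}$, while trivially $\bar{u}\in L^2(\mathbb{R};V)$ with $\|\bar{u}\|_{L^2(\mathbb{R};V)}=\sqrt{2}\,\|u\|_{L^2(\mathbb{R}_+;V)}$. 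Let $\{\rho_\epsilon\}$ be a standard family of mollifiers on $\mathbb{R}$ and put $\bar{u}_\epsilon=\rho_\epsilon*\bar{u}$, with convolution taken in the time variable only. Since $\bar{u}\in L^2(\mathbb{R};H)$, Young's inequality gives $\partial_t\bar{u}_\epsilon=\rho_\epsilon'*\bar{u}\in L^2(\mathbb{R};H)$, so $\bar{u}_\epsilon\in H^1(\mathbb{R};H)$; the same inequality gives $\bar{u}_\epsilon\in L^2(\mathbb{R};V)$. Standard properties of mollifiers --- most transparently via the temporal Fourier transform with values in $H$, respectively $V$ --- yield $\bar{u}_\epsilon\to\bar{u}$ in $H^{1/2}(\mathbb{R};H)$ and in $L^2(\mathbb{R};V)$ as $\epsilon\to0$.

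Finally, I would set $u_\epsilon=\bar{u}_\epsilon|_{\mathbb{R}_+}$: restriction is a contraction $H^1(\mathbb{R};H)\to H^1(\mathbb{R}_+;H)$, $L^2(\mathbb{R};V)\to L^2(\mathbb{R}_+;V)$ and $H^{1/2}(\mathbb{R};H)\to H^{1/2}(\mathbb{R}_+;H)$ (the Slobodeckij seminorm of the restriction only integrates over a subdomain), so $u_\epsilon\in H^1(\mathbb{R}_+;H)\cap L^2(\mathbb{R}_+;V)$, and since $\bar{u}|_{\mathbb{R}_+}=u$, continuity of restriction gives $u_\epsilon\to u$ in $H^{1/2}(\mathbb{R}_+;H)\cap L^2(\mathbb{R}_+;V)$; transporting this back through $\phi$ proves the lemma. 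I do not expect a genuine obstacle here; the only point requiring care is that every operation used --- even extension, mollification in time, restriction --- acts solely in the time variable and is simultaneously bounded on (and, for mollification, converges to the identity on) \emph{both} factors of the intersection, so that membership in $V_i(0)$ is preserved throughout, and the boundedness of $e_{\text{even}}$ on $H^{1/2}(\mathbb{R}_+;L^2(\Omega_i(0)))$ is exactly the characterization already used in \cref{lem:dextend}.
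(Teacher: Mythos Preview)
Your proposal is correct and follows essentially the same approach as the paper: reduce via the isomorphism $\phi$ to the fixed-domain intersection space, even-extend in time to $\mathbb{R}$, mollify purely in the time variable (gaining an $H^1$ time derivative while preserving the spatial $V$-membership), show convergence in both factors of the intersection via the temporal Fourier transform, and restrict back to $\mathbb{R}_+$. The paper's argument is organized slightly differently---it first treats the full-line mollification step for an arbitrary $v\in H^{1/2}(\mathbb{R};L^2(\Omega_i(0)))\cap L^2(\mathbb{R};V_i(0))$ and only afterwards introduces the even extension---but the ingredients and logic are the same.
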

\begin{proof}
We first consider the density of $U_i$. Introduce the mollifier $\varphi\in C^\infty_0(\mathbb{R})$ with the property $\int_{\mathbb{R}}\varphi(t)\mathrm{d}t=1$ and let 
$\varphi_{\varepsilon}(t)=\varepsilon^{-1}\varphi(\varepsilon^{-1}t)$ for $\varepsilon>0$. For every 
\begin{equation*}
v\in H^{1/2}\bigl(\mathbb{R};L^2(\Omega_i(0))\bigr)\cap L^2\bigl(\mathbb{R};V_i(0)\bigr)
\end{equation*}
the convolution $v_{\varepsilon}=\varphi_{\varepsilon}*v$ is an element in $L^2\bigl(\mathbb{R};V_i(0)\bigr)$ and $\{v_{\varepsilon}\}$ converges to $v$ in the same space as $\varepsilon$ tends to~$0^+$, see, e.g.,~\cite[Lemma~1.2.30 and~Proposition~1.2.32]{hytonen}.

Recall the vector-valued Fourier transform $\mathcal{F}$ on $L^2\bigl(\mathbb{R};L^2(\Omega_i(0))\bigr)_\mathbb{C}$, see~\cite[Sections~2.4]{hytonen}. The Fourier characterizations of convolutions and derivatives then imply that $\mathcal{F}(v_\varepsilon)=\mathcal{F}(\varphi_\varepsilon)\mathcal{F}(v)$ and $\partial_tv_\varepsilon=(\partial_t\varphi_{\varepsilon})*v$. The latter implies that 
\begin{equation*}
v_\varepsilon\in H^1\bigl(\mathbb{R};L^2(\Omega_i(0))\bigr)\cap L^2\bigl(\mathbb{R};V_i(0)\bigr) 
\end{equation*}
for all $\epsilon>0$. Furthermore, the Fourier characterization of $H^{1/2}(\mathbb{R})$, see~\cite[Lemma~16.3]{tartar}, yields that  
\begin{equation*}
v\mapsto \bigl(\|\sqrt{\mathrm{i}(\cdot)}\mathcal{F}v\|^2_{L^2(\mathbb{R};L^2(\Omega_i(0)))_\mathbb{C}}+\|v\|^2_{L^2(\mathbb{R};L^2(\Omega_i(0)))}\bigr)^{1/2}
\end{equation*}
is an equivalent norm on $H^{1/2}\bigl(\mathbb{R};L^2(\Omega_i(0))\bigr)$. Due to the temporal $H^{1/2}$-regularity of~$v$, one has that $w=\mathcal{F}^{-1}\sqrt{\mathrm{i}(\cdot)}\mathcal{F}v\in L^2\bigl(\mathbb{R};L^2(\Omega_i(0))\bigr)$ and 
\begin{equation*}
\begin{aligned}
\|v-v_{\varepsilon} \|^2_{H^{1/2}(\mathbb{R};L^2(\Omega_i(0)))}&\leq C\| (1-\mathcal{F}\varphi_\varepsilon)\sqrt{\mathrm{i}(\cdot)} \mathcal{F}v\|^2_{L^2(\mathbb{R};L^2(\Omega_i(0)))_\mathbb{C}}\\
&\qquad+\|v-v_{\varepsilon}\|^2_{L^2(\mathbb{R};L^2(\Omega_i(0)))}\\
&\leq C\|w-\varphi_{\varepsilon}*w\|^2_{L^2(\mathbb{R};L^2(\Omega_i(0)))}+\|v-v_{\varepsilon}\|^2_{L^2(\mathbb{R};L^2(\Omega_i(0)))}.
\end{aligned}
\end{equation*}
The above bound together with~\cite[Proposition~1.2.32]{hytonen} implies that $v_{\varepsilon}$ also converges to $v$ in~$H^{1/2}\bigl(\mathbb{R};L^2(\Omega_i(0))\bigr)$. 

Next, let $u\in \phi_-(\tilde{W}_i)$ and observe that the even extension in time 
\begin{equation*}
e_{\mathrm{even}}\colon \phi_-(\tilde{W}_i)\to H^{1/2}\bigl(\mathbb{R};L^2(\Omega_i(0))\bigr)\cap L^2\bigl(\mathbb{R};V_i(0)\bigr) 
\end{equation*}
is a well defined map. Furthermore, the restriction $r_{\mathbb{R}_+}v=\restr{v}{\mathbb{R}_+\times \Omega_i(0)}$ is a bounded left-inverse to $e_{\mathrm{even}}$. Set $u_\varepsilon=r_{\mathbb{R}_+}(\varphi_{\varepsilon}*e_{\mathrm{even}}u)\in \phi_-(U_i)$. The previous argumentation on $\mathbb{R}$ then gives us the limit
\begin{equation*}
\begin{aligned}
\|u-u_\varepsilon\|_{\phi_-(\tilde{W}_i)}&=\|r_{\mathbb{R}_+}(e_{\mathrm{even}}u-\varphi_{\varepsilon}*e_{\mathrm{even}}u)\|_{\phi_-(\tilde{W}_i)}\\
&\leq C\|e_{\mathrm{even}}u-\varphi_{\varepsilon}*e_{\mathrm{even}}u\|_{H^{1/2}(\mathbb{R};L^2(\Omega_i(0)))\cap L^2(\mathbb{R};V_i(0))}\to 0
\end{aligned}
\end{equation*}
as $\varepsilon$ tends to~$0^+$. Hence, $\phi_-(U_i)$ is dense in $\phi_-(\tilde{W}_i)$. The proof for $U_i$ is then completed by recalling that $\phi\colon H^{1/2}\bigl(\mathbb{R};L^2(\Omega_i(0))\bigr)\cap L^2\bigl(\mathbb{R};V_i(0)\bigr)\to \tilde{W}_i$ is isomorphic with an equivalence of norms. 

The same argument holds for $U_i^0$ simply by replacing $V_i$ with $H^1_0(\Omega_i)$.
\end{proof}

\begin{corollary}\label{cor:aextend}
If~\cref{ass:phi,ass:eqdata,ass:tech} hold, then the bilinear form\linebreak $a_i\colon W_i\times U_i\to \mathbb{R}$
can be continuously extended to $W_i\times\tilde{W}_i$, and the extension satisfies 
\begin{equation}\label{eq:amon2}
a_i(u,u)\geq c\|u\|^2_{L^2_{H^1(\Omega_i)}(\mathbb{R}_+)}\quad\text{for all }u\in W_i.
\end{equation}
\end{corollary}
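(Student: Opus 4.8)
The plan is to split $a_i = d_i(\phi_-\cdot,\phi_-\cdot) + c_i$ and handle the two summands separately. The form $c_i$ needs no extension at all: by \cref{ass:eqdata} (using $\alpha,\beta\in L^\infty$) it is bounded on $L^2_{H^1(\Omega_i)}(\mathbb{R}_+)\times L^2_{H^1(\Omega_i)}(\mathbb{R}_+)$, and since $V_i(t)$ carries the norm of $H^1(\Omega_i(t))$ one has the continuous embeddings $W_i\hookrightarrow L^2_{V_i}(\mathbb{R}_+)\hookrightarrow L^2_{H^1(\Omega_i)}(\mathbb{R}_+)$, and likewise $\tilde W_i\hookrightarrow L^2_{H^1(\Omega_i)}(\mathbb{R}_+)$; hence $c_i$ is already a bounded bilinear form on $W_i\times\tilde W_i$. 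For the $d_i$-part I would invoke \cref{lem:dextend}, which already provides a bounded extension of $d_i(\phi_-\cdot,\phi_-\cdot)$ to $H^{1/2}_{(0,\cdot)\,L^2(\Omega_i)}(\mathbb{R}_+)\times H^{1/2}_{L^2(\Omega_i)}(\mathbb{R}_+)$; by definition $W_i$ embeds continuously into the first factor and $\tilde W_i$ into the second (using $H^{1/2}_{(0,\cdot)}(\mathbb{R}_+)\hookrightarrow H^{1/2}(\mathbb{R}_+)$, which also gives $W_i\hookrightarrow\tilde W_i$). Adding the two pieces yields a bounded bilinear form on $W_i\times\tilde W_i$. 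It agrees with the original $a_i$ on $W_i\times U_i$ because \cref{lem:dextend}'s extension coincides with $d_i$ there, and it is the \emph{unique} continuous extension since $U_i$ is dense in $\tilde W_i$ by \cref{lem:density3} while $v\mapsto a_i(u,v)$ is $\tilde W_i$-continuous for each fixed $u\in W_i$.

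For the coercivity \cref{eq:amon2}, fix $u\in W_i\subset H^{1/2}_{(0,\cdot)\,L^2(\Omega_i)}(\mathbb{R}_+)$, so the diagonal $(u,u)$ lies in the domain of the extended $d_i$ and \cref{eq:dmon2} of \cref{lem:dextend} gives
\begin{equation*}
d_i(\phi_-u,\phi_-u)\geq \int_{\mathbb{R}_+}\int_{\Omega_i(t)}(1/2\,\nabla\cdot\mathbf{w})\,u^2\,\mathrm{d}x_t\,\mathrm{d}t .
\end{equation*}
Adding $c_i(u,u)=\int_{\mathbb{R}_+}\int_{\Omega_i(t)}\alpha|\nabla u|^2+\beta u^2\,\mathrm{d}x_t\,\mathrm{d}t$ yields
\begin{equation*}
a_i(u,u)\geq \int_{\mathbb{R}_+}\int_{\Omega_i(t)}\alpha|\nabla u|^2+(1/2\,\nabla\cdot\mathbf{w}+\beta)\,u^2\,\mathrm{d}x_t\,\mathrm{d}t\geq c\,\|u\|^2_{L^2_{H^1(\Omega_i)}(\mathbb{R}_+)},
\end{equation*}
where the last step uses $\alpha\geq c>0$ and $1/2\,\nabla\cdot\mathbf{w}+\beta\geq c>0$ from \cref{ass:eqdata}, exactly as in the proof of \cref{lem:homexist}.

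I do not expect any genuine obstacle here: all the analytic content — the temporal $H^{1/2}$ duality bound for $d_i$, the $|J|$-multiplier estimate, and the density of $U_i$ in $\tilde W_i$ — has already been established in \cref{lem:Jmult,lem:dextend,lem:density3}. The only points that need a line of care are the embedding chain $W_i\hookrightarrow\tilde W_i\hookrightarrow L^2_{H^1(\Omega_i)}(\mathbb{R}_+)$ and the observation that, because $H^{1/2}_{(0,\cdot)}(\mathbb{R}_+)\hookrightarrow H^{1/2}(\mathbb{R}_+)$, evaluating the extended $d_i(\phi_-\cdot,\phi_-\cdot)$ on the diagonal with argument $u\in W_i$ is legitimate.
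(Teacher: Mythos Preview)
Your proposal is correct and follows essentially the same route as the paper's proof: split $a_i=d_i(\phi_-\cdot,\phi_-\cdot)+c_i$, bound the $d_i$-part via \cref{lem:dextend} and the $c_i$-part directly, invoke the density of $U_i$ in $\tilde W_i$ from \cref{lem:density3} for the extension, and combine \cref{eq:dmon2} with \cref{ass:eqdata} for the coercivity. The paper presents this more tersely (writing out the two-term bound $|a_i(u,v)|\leq C\|u\|_{W_i}\|v\|_{\tilde W_i}$ directly for $v\in U_i$ and then extending by density), but the substance is the same.
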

\begin{proof}
By~\cref{lem:dextend} one has the bound
\begin{equation*}
\begin{aligned}
|a_i(u,v)| &\leq |d_i\bigl(\phi_-u,\phi_-v\bigr)|+|c_i(u,v)|\\
            &\leq C\bigl(\|u\|_{H^{1/2}_{(0,\cdot)\, L^2(\Omega_i)}(\mathbb{R}_+)}\|u\|_{H^{1/2}_{ L^2(\Omega_i)}(\mathbb{R}_+)}
                +\|u\|_{L^2_{H^1(\Omega_i)}(\mathbb{R}_+)}\|v\|_{L^2_{H^1(\Omega_i)}(\mathbb{R}_+)}\bigr)\\
             &\leq C\|u\|_{W_i}\|v\|_{\tilde{W}_i}
\end{aligned}
\end{equation*}
for all $u\in W_i$ and $v\in U_i$. As $U_i$ is dense in $\tilde{W}_i$, via~\cref{lem:density3}, the above bound implies that~$a_i$ can be continuously extended to $W_i\times\tilde{W}_i$. The lower bound~\cref{eq:amon2} follows directly by combining \cref{eq:dmon2} with \cref{ass:eqdata}.
\end{proof}

This temporal $H^{1/2}$-framework now gives a \emph{unique} solution to the weak parabolic equation on $\Omega_i$ with inhomogeneous boundary conditions.

\begin{corollary}\label{cor:weakHonehalf}
If~\cref{ass:phi,ass:eqdata,ass:tech} hold, then for every $\eta\in Z$ and $g\in L^2_{V_i^*}(\R_+)$ there exists a unique solution $u\in W_i$ to the equation 
\begin{equation}\label{eq:weakHonehalf_i}
a_i(u,v)=\langle g,v\rangle\quad\text{for all }v\in \tilde{W}_i^0
\end{equation}
such that $T_iu=\eta$, $\phi_-u\in H^1_0\bigl(\mathbb{R}_+;H^{-1}(\Omega_i(0))\bigr)$, and $u$ satisfies the bound~\cref{eq:weaksoletabound}.
\end{corollary}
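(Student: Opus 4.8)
The plan is to deduce existence by upgrading the solution furnished by \cref{lem:nonhomexist} via a density argument, and to deduce uniqueness from the coercivity established in \cref{cor:aextend}.

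\emph{Existence.} I would take the solution $u\in W_i$ provided by \cref{lem:nonhomexist}; it already satisfies $T_iu=\eta$, $\phi_-u\in H^1_0\bigl(\mathbb{R}_+;H^{-1}(\Omega_i(0))\bigr)$, the bound~\cref{eq:weaksoletabound}, and $a_i(u,v)=\langle g,v\rangle$ for all $v\in U_i^0$. It then remains only to enlarge the class of test functions from $U_i^0$ to $\tilde{W}_i^0$. The map $v\mapsto a_i(u,v)$ is bounded on $\tilde{W}_i^0$ by the extension of $a_i$ to $W_i\times\tilde{W}_i$ in \cref{cor:aextend} (note $\tilde{W}_i^0\subset\tilde{W}_i$). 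The map $v\mapsto\langle g,v\rangle$ is bounded on $\tilde{W}_i^0$ as well, since $H^1_0(\Omega_i)\hookrightarrow V_i$ gives $g\in L^2_{V_i^*}(\mathbb{R}_+)\hookrightarrow L^2_{H^{-1}(\Omega_i)}(\mathbb{R}_+)$, while $\tilde{W}_i^0\hookrightarrow L^2_{H^1_0(\Omega_i)}(\mathbb{R}_+)$. Since $U_i^0$ is dense in $\tilde{W}_i^0$ by \cref{lem:density3}, the identity $a_i(u,v)=\langle g,v\rangle$ passes to all $v\in\tilde{W}_i^0$, so $u$ solves~\cref{eq:weakHonehalf_i} with all the asserted properties.

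\emph{Uniqueness.} Suppose $u_1,u_2\in W_i$ both solve~\cref{eq:weakHonehalf_i} with data $(\eta,g)$, and set $w=u_1-u_2\in W_i$, so that $a_i(w,v)=0$ for all $v\in\tilde{W}_i^0$ and $T_iw=0$. The decisive point is that $T_iw=0$ forces $w\in W_i^0$. Indeed, since $T_i=\psi\,T_{i,0}\,\phi_-$ and $\psi$ is an isomorphism, $T_iw=0$ gives $T_{i,0}(\phi_-w)=0$, i.e.\ $\restr{\bigl(T_{\partial\Omega_i(0)}(\phi_-w)\bigr)}{\mathbb{R}_+\times\Gamma(0)}=0$; together with the characterisation of $L^2\bigl(\mathbb{R}_+;V_i(0)\bigr)$ in~\cref{eq:L2Videntities}, which already forces $T_{\partial\Omega_i(0)}(\phi_-w)=0$ on $\mathbb{R}_+\times(\partial\Omega_i(0)\setminus\Gamma(0))$, this yields $T_{\partial\Omega_i(0)}(\phi_-w)=0$ on all of $\mathbb{R}_+\times\partial\Omega_i(0)$, hence $\phi_-w\in L^2\bigl(\mathbb{R}_+;H^1_0(\Omega_i(0))\bigr)$ by~\cref{eq:L2Videntities} again. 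Thus $\phi_-w\in\phi_-(W_i^0)$, and since $H^{1/2}_{(0,\cdot)}(\mathbb{R}_+)\hookrightarrow H^{1/2}(\mathbb{R}_+)$ we get $w\in W_i^0\hookrightarrow\tilde{W}_i^0$. Taking $v=w$ in the weak identity and applying the coercivity~\cref{eq:amon2} gives $0=a_i(w,w)\geq c\|w\|^2_{L^2_{H^1(\Omega_i)}(\mathbb{R}_+)}$, whence $w=0$.

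The only nonroutine point is the identification, in the uniqueness part, of the kernel of $T_i\colon W_i\to Z$ with $W_i^0$; this has to be carried out at the level of the space-time trace operators of~\cref{sec:weak} and the characterisations~\cref{eq:L2Videntities}, rather than pointwise in $t$. Everything else is an immediate consequence of \cref{lem:nonhomexist,cor:aextend,lem:density3}.
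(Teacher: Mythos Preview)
Your proof is correct and follows essentially the same route as the paper: existence via \cref{lem:nonhomexist} plus the density of $U_i^0$ in $\tilde{W}_i^0$ (\cref{lem:density3}) and the extension of $a_i$ (\cref{cor:aextend}), and uniqueness via the coercivity~\cref{eq:amon2}. The paper's proof is terser on uniqueness, simply stating that it ``follows directly by~\cref{eq:amon2}'', whereas you rightly spell out the step that the paper leaves implicit---namely that $T_iw=0$ forces $w\in W_i^0\hookrightarrow\tilde W_i^0$, so that testing with $v=w$ is legitimate.
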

\begin{proof}
According to~\cref{lem:nonhomexist}, there exists a solution $u\in W_i$ to~\cref{eq:weak2}, with the desired properties. The density of $U_i^0$ in $\tilde{W}_i^0$ together with the extension of~$a_i$, see~\cref{lem:density3,cor:aextend}, then implies that~$u$ is also a solution to~\cref{eq:weakHonehalf_i}. The uniqueness of the solution follows directly by~\cref{eq:amon2}.
\end{proof}

The same $H^{1/2}$-extension that has lead up to \cref{cor:aextend,cor:weakHonehalf} trivially gives that the weak problem on $\Omega$ with 
homogeneous boundary conditions, i.e., 
\begin{equation}\label{eq:weakHonehalf}
a(u,v)=\langle f,v\rangle\quad\text{for all }v\in \tilde{W},
\end{equation}
has a unique solution $u\in W$ with $\phi_- u\in H_0^1\bigr(\mathbb{R}_+;H^{-1}(\Omega(0))\bigr)$.

Letting $g=0$ in~\cref{cor:weakHonehalf} yields the bounded linear solution operators 
\begin{equation}\label{eq:Fi}
F_i\colon Z\to W_i, \qquad \eta\mapsto u 
\end{equation}
such that $T_iF_i\eta=\eta$, $\phi_-F_i\eta\in Q_i$ and $u=F_i\eta$ solves~\cref{eq:weakHonehalf_i} with $g=0$. 

Moreover, setting $\eta=0$ yields the bounded linear solution operators 
\begin{equation*}
G_i\colon L^2_{H^{-1}(\Omega_i)}(\R_+)\to W_i^0, \qquad g\mapsto u 
\end{equation*}
such that $u=G_ig$ solves~\cref{eq:weakHonehalf_i}. Here we use the fact that every $g\in L^2_{H^{-1}(\Omega_i)}(\R_+)$ can be interpreted as an element in $L^2_{V_i^*}(\mathbb{R}_+)$ by restricting $g$ to $L^2_{V_i}(\mathbb{R}_+)$. (This restriction is not an injective map.)

\section{Transmission problems on evolving domains}\label{sec:trans}
In this section we analyze the transmission problem on the evolving domain decomposition~\cref{eq:trans}. The weak formulation of~\cref{eq:trans} is to find $(u_1, u_2)\in W_1\times W_2$ such that
\begin{equation}\label{eq:weaktran}
	\left\{\begin{aligned}
	     a_i(u_i, v_i)&=\langle f_i, v_i\rangle & & \text{for all } v_i\in \tilde{W}_i^0,\, i=1,2,\\
	     T_1u_1&=T_2u_2, & &\\
	     \textstyle\sum_{i=1}^2 a_i(u_i, F_i\mu)-\langle f_i, F_i\mu\rangle&=0 & &\text{for all }\mu\in Z. 
	\end{aligned}\right.
\end{equation}
We introduce the spatial restriction operators
\begin{equation*}
    q_{i,0}\colon u\mapsto\restr{u}{\R_+\times\Omega_i(0)}\quad\text{and}\quad q_i=\phi q_{i,0}\phi_{-},
\end{equation*}
where the maps $q_i\colon W\to W_i$ and $q_i\colon\tilde{W}\to \tilde{W}_i$ both become well defined and continuous. With the derived setting we are now able to ``cut'' and ``glue together'' functions without losing spatial or temporal regularity. 
\begin{lemma}\label{lemma:paste}
 Suppose that~\cref{ass:phi} holds. If $u\in W$, then 
 \begin{equation*}
 (u_1, u_2)=(q_1u, q_2u)\in W_1\times W_2
 \end{equation*}
and $T_1u_1=T_2u_2$. Conversely, if $(u_1, u_2)\in W_1\times W_2$ and $T_1u_1=T_2u_2$, then $u=\phi v$ with 
 \begin{equation*}
v=\{\phi_-u_1 \text{ on } \R_+\times\Omega_1(0), \phi_-u_2\text{ on } \R_+\times\Omega_2(0)\}
 \end{equation*}
satisfies $u\in W$. Moreover, the same holds for $u\in \tilde{W}$ and $(u_1, u_2)\in \tilde{W}_1\times \tilde{W}_2$.
\end{lemma}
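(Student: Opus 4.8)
The plan is to transport the statement to the reference configuration. Since $\phi$ and $\phi_-$ are isometric isomorphisms between $W$ and $\phi_-(W)=H^{1/2}_{(0,\cdot)}\bigl(\mathbb{R}_+;L^2(\Omega(0))\bigr)\cap L^2\bigl(\mathbb{R}_+;H^1_0(\Omega(0))\bigr)$, between each $W_i$ and the corresponding space over $\Omega_i(0)$, and since $\psi,\psi_-$ identify $Z$ with $\psi_-(Z)$ while $\phi_-q_i=q_{i,0}\phi_-$ and $T_i=\psi\,T_{i,0}\,\phi_-$, it suffices to prove the analogous statement with $W,W_i,q_i,T_i$ replaced by $\phi_-(W),\phi_-(W_i),q_{i,0},T_{i,0}$. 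I will repeatedly use that $\Omega_1(0)\cap\Omega_2(0)=\emptyset$, $\overline{\Omega(0)}=\overline{\Omega_1(0)}\cup\overline{\Omega_2(0)}$, and $\Gamma(0)$ has $n$-dimensional measure zero, so that restriction realises $L^2(\Omega(0))$ as the orthogonal direct sum $L^2(\Omega_1(0))\oplus L^2(\Omega_2(0))$, and that extension by zero (or the even extension) in the time variable commutes with restriction in the space variable.

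For the \emph{cutting} direction, take $v\in\phi_-(W)$. The spatial regularity $q_{i,0}v\in L^2\bigl(\mathbb{R}_+;V_i(0)\bigr)$ follows from the characterisations \cref{eq:L2Videntities}: spatial restriction is bounded $H^1(\Omega(0))\to H^1(\Omega_i(0))$, and since the trace of $v$ vanishes on $\partial\Omega(0)$ while $\partial\Omega_i(0)\setminus\Gamma(0)\subseteq\partial\Omega(0)$, the trace of $q_{i,0}v$ vanishes on $\mathbb{R}_+\times(\partial\Omega_i(0)\setminus\Gamma(0))$. For the temporal regularity, $e_{\mathbb{R}}(q_{i,0}v)$ is the composition of $e_{\mathbb{R}}v\in H^{1/2}\bigl(\mathbb{R};L^2(\Omega(0))\bigr)$ with the bounded (norm one) restriction operator $L^2(\Omega(0))\to L^2(\Omega_i(0))$, and composition with a bounded operator preserves $H^{1/2}$-in-time regularity because the Gagliardo seminorm only sees differences $w(\tau)-w(t)$. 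Hence $q_{i,0}v\in\phi_-(W_i)$. Finally, for a.e.\ $t$ the function $v(t)\in H^1_0(\Omega(0))$ has a single two-sided trace on $\Gamma(0)$ coinciding with the trace of either restriction, so $T_{1,0}q_{1,0}v=T_{2,0}q_{2,0}v$; this gives $T_1u_1=T_2u_2$ as elements of $L^2_{L^2(\Gamma)}(\mathbb{R}_+)$, hence in $Z$.

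For the \emph{gluing} direction, take $(u_1,u_2)\in\phi_-(W_1)\times\phi_-(W_2)$ with $T_{1,0}u_1=T_{2,0}u_2$ and let $v$ be the glued function. Since the space-time trace operator $T_{i,0}$ of \cref{eq:tracespaacetime} acts pointwise in time, the hypothesis forces the spatial traces of $u_1(t)$ and $u_2(t)$ on $\Gamma(0)$ to agree for a.e.\ $t$; combined with the vanishing traces on $\partial\Omega_i(0)\setminus\Gamma(0)$, the classical gluing property of $H^1$-functions across a Lipschitz interface yields $v(t)\in H^1_0(\Omega(0))$ for a.e.\ $t$, with $\|v(t)\|_{H^1(\Omega(0))}^2=\|u_1(t)\|_{H^1(\Omega_1(0))}^2+\|u_2(t)\|_{H^1(\Omega_2(0))}^2$, whence $v\in L^2\bigl(\mathbb{R}_+;H^1_0(\Omega(0))\bigr)$ after routine measurability considerations. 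For the temporal part, $(e_{\mathbb{R}}v)(t)$ restricts to $(e_{\mathbb{R}}u_i)(t)$ on $\Omega_i(0)$, so both the $L^2\bigl(\mathbb{R};L^2(\Omega(0))\bigr)$-norm and the Gagliardo seminorm of $e_{\mathbb{R}}v$ split as the sums of those of $e_{\mathbb{R}}u_1$ and $e_{\mathbb{R}}u_2$ and are therefore finite; thus $v\in H^{1/2}_{(0,\cdot)}\bigl(\mathbb{R}_+;L^2(\Omega(0))\bigr)$ and $u=\phi v\in W$.

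The $\tilde{W},\tilde{W}_i$ case is handled identically, replacing the zero-extension $e_{\mathbb{R}}$ by the even extension $e_{\mathrm{even}}$ in the characterisation of $H^{1/2}(\mathbb{R}_+;\cdot)$ recalled in \cref{lem:dextend} (equivalently, working directly with the Gagliardo seminorm over $\mathbb{R}_+$); this too commutes with spatial restriction, so every step carries over verbatim. I expect the main point requiring care to be the justification that $T_{i,0}$ acts pointwise in time, so that the interface condition $T_1u_1=T_2u_2$ transfers to matching spatial traces of $u_1(t)$ and $u_2(t)$ for almost every $t$ and the pointwise-in-space gluing lemma becomes applicable; once that is in place, the remaining steps reduce to the orthogonal splitting of $L^2(\Omega(0))$ and to the commutation of spatial restriction with the temporal extension operators.
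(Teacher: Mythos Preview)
Your proof is correct and follows the same strategy as the paper: pull back to the reference cylinder $\mathbb{R}_+\times\Omega(0)$ via $\phi_-$ (using $\phi_-q_i=q_{i,0}\phi_-$ and $T_i=\psi\,T_{i,0}\,\phi_-$) and then cut and glue on the stationary domain. The only difference is that the paper delegates the stationary-domain step entirely to \cite[Lemma~5]{EEEH25}, whereas you supply a direct argument via the orthogonal splitting $L^2\bigl(\Omega(0)\bigr)=L^2\bigl(\Omega_1(0)\bigr)\oplus L^2\bigl(\Omega_2(0)\bigr)$ and the resulting additivity of the temporal Gagliardo seminorm; this is precisely the content of the cited lemma, so the two proofs are in substance identical.
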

\begin{proof}
    First, suppose that $u\in W$. Then, as $\phi_-$ is an isomorphism on intersection spaces, we have
    \begin{equation*}
        \phi_-u\in H_{(0,\cdot)}^{1/2}\bigl(\R_+; L^2(\Omega(0))\bigr)\cap L^2\bigl(\R_+; H^1_0(\Omega(0))\bigr).
    \end{equation*}
    It follows from~\cite[Lemma 5]{EEEH25} that
    \begin{equation*}
        q_{i,0}(\phi_-u)\in H_{(0,\cdot)}^{1/2}(\R_+; L^2(\Omega_i(0)))\cap L^2\bigl(\R_+; V_i(0)\bigr)
    \end{equation*}
     and $T_{1,0}\restr{(\phi_-u)}{\R_+\times\Omega_1(0)}=T_{2,0}\restr{(\phi_-u)}{\R_+\times\Omega_2(0)}$. Applying the isomorphism $\phi$ yields that $u_i=q_iu\in W_i$ and the definition of $T_i$ gives $T_1u_1=T_2u_2$. 
    
Conversely, suppose that $u_i\in W_i$ and $T_1u_1=T_2u_2$. Then we have
    \begin{equation*}
        \phi_-u_i\in H_{(0,\cdot)}^{1/2}(\R_+; L^2(\Omega_i(0)))\cap L^2\bigl(\R_+; V_i(0)\bigr)
    \end{equation*}
    and by the definition of $T_i$ we have $T_{1, 0}\phi_-u_1=T_{2, 0}\phi_- u_2$. Therefore, it follows from~\cite[Lemma 5]{EEEH25} that
    \begin{equation*}
    \begin{aligned}
        v&=\{\phi_-u_1 \text{ on } \R_+\times\Omega_1(0), \phi_-u_2\text{ on } \R_+\times\Omega_2(0)\}\\
        &\in H_{(0,\cdot)}^{1/2}(\R_+; L^2(\Omega(0)))\cap L^2\bigl(\R_+; H^1_0(\Omega(0))\bigr).
    \end{aligned}
    \end{equation*}
Thus, we have $u=\phi v\in W$. The argument for $\tilde{W}$ and $\tilde{W}_i$ is the same since it also holds on the reference cylinder $\R_+\times\Omega(0)$ according to~\cite[Lemma 5]{EEEH25}.
\end{proof}

\begin{lemma}\label{lemma:asplit}
If~\cref{ass:phi,ass:eqdata,ass:tech} hold, then 
    \begin{equation*}
        a(u, v) =\sum_{i=1}^2a_i(q_iu, q_iv)
    \end{equation*}
    for all $u\in W$ and $v\in \tilde{W}$.
\end{lemma}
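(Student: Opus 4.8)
The plan is to split $a=d(\phi_-\cdot,\phi_-\cdot)+c$ and $a_i=d_i(\phi_-\cdot,\phi_-\cdot)+c_i$, to prove the splitting of the $c$-part and of the $d$-part separately, reducing the $d$-part to ``classical'' test functions on which the forms are given by their defining integrals, and then to conclude by density. Throughout I use that the spatial restriction $q_i=\phi\,q_{i,0}\,\phi_-$ acts as $q_iu=\restr{u}{\mathbb{R}_+\times\Omega_i(t)}$, since $\phi_t\bigl(\restr{(\phi_{-t}u)}{\Omega_i(0)}\bigr)(y)=u(t)(y)$ for a.e.\ $y\in\Omega_i(t)$ because $\Phi_t(\Omega_i(0))=\Omega_i(t)$; hence also $\nabla(q_iu)=\restr{(\nabla u)}{\Omega_i(t)}$ a.e., the weak gradient of the restriction of an $H^1$-function to an open subdomain being the restriction of the weak gradient.

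For the $c$-part, note that $\overline{\Omega(t)}=\overline{\Omega_1(t)}\cup\overline{\Omega_2(t)}$ with disjoint interiors and $\Gamma(t)$ an $(n-1)$-dimensional Lipschitz manifold, hence Lebesgue-null in $\mathbb{R}^n$, while $\alpha,\beta$ are defined on $B_R\supset\Omega(t)$. Therefore, for every $u,v\in L^2_{H^1_0(\Omega)}(\mathbb{R}_+)$ and a.e.\ $t$,
\[
\int_{\Omega(t)}\alpha\nabla u\cdot\nabla v+\beta uv\,\mathrm{d}x_t=\sum_{i=1}^2\int_{\Omega_i(t)}\alpha\nabla(q_iu)\cdot\nabla(q_iv)+\beta\,(q_iu)(q_iv)\,\mathrm{d}x_t ,
\]
and integrating in $t$ gives $c(u,v)=\sum_{i=1}^2c_i(q_iu,q_iv)$ for all such $u,v$.

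For the $d$-part, fix $u\in W$ and let $v$ range first over $U=H^1_{L^2(\Omega)}(\mathbb{R}_+)\cap L^2_{H^1_0(\Omega)}(\mathbb{R}_+)$. Then $\partial_t\phi_-v\in L^2\bigl(\mathbb{R}_+;L^2(\Omega(0))\bigr)$, so the defining integral $-\int_{\mathbb{R}_+}\int_{\Omega(0)}(\phi_-u)\,(\partial_t\phi_-v)\,|J_t|\,\mathrm{d}x_0\mathrm{d}t$ is finite, and the continuous extension of $d$ from \cref{lem:dextend} (and its $\Omega$-analogue) coincides with it; likewise $\phi_-q_iv=\restr{(\phi_-v)}{\mathbb{R}_+\times\Omega_i(0)}$ has an $L^2$-in-time derivative and $d_i(\phi_-q_iu,\phi_-q_iv)=-\int_{\mathbb{R}_+}\int_{\Omega_i(0)}(\phi_-q_iu)\,(\partial_t\phi_-q_iv)\,|J_t|\,\mathrm{d}x_0\mathrm{d}t$. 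Using that $\phi_-q_iu=q_{i,0}\phi_-u=\restr{(\phi_-u)}{\mathbb{R}_+\times\Omega_i(0)}$, that $\partial_t$ commutes with the spatial restriction $q_{i,0}$, that $|J|$ restricts from $B_r$ to $\Omega_i(0)$, and that $\overline{\Omega(0)}=\overline{\Omega_1(0)}\cup\overline{\Omega_2(0)}$ with disjoint interiors and $\Gamma(0)$ Lebesgue-null, the integral over $\Omega(0)$ splits as the sum of the integrals over $\Omega_1(0)$ and $\Omega_2(0)$, whence $d(\phi_-u,\phi_-v)=\sum_{i=1}^2d_i(\phi_-q_iu,\phi_-q_iv)$ for all $u\in W$, $v\in U$. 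Combined with the $c$-part this yields $a(u,v)=\sum_{i=1}^2a_i(q_iu,q_iv)$ for all $u\in W$, $v\in U$.

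Finally, the maps $q_i\colon\tilde{W}\to\tilde{W}_i$ are continuous, $a$ and $a_i$ are continuous on $W\times\tilde{W}$ and $W_i\times\tilde{W}_i$ (\cref{cor:aextend} and its analogue on $\Omega$), and $U$ is dense in $\tilde{W}$ (\cref{lem:density3} with $\Omega_i,V_i$ replaced by $\Omega,H^1_0(\Omega)$). Hence, for each fixed $u\in W$, the two continuous maps $v\mapsto a(u,v)$ and $v\mapsto\sum_{i=1}^2a_i(q_iu,q_iv)$ on $\tilde{W}$ agree on the dense subspace $U$, so they agree on all of $\tilde{W}$, which is the assertion. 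The point requiring the most care is the claim that the $H^{1/2}$-continuous extensions of $d$ and $d_i$ (and hence of $a$, $a_i$) coincide with the defining integral formulas on the ``classical'' pairs $W\times U$ and $W_i\times U_i$ — i.e.\ that the extension does not alter the forms there; this follows from the boundedness estimate and density argument in the proof of \cref{lem:dextend}, which remain valid for any second argument possessing an $L^2$-in-time derivative. The remaining ingredients — the commutation of $q_{i,0}$ with $\phi_\pm$, $\partial_t$ and $\nabla$, and the negligibility of $\Gamma(0),\Gamma(t)$ — are routine, and once the forms are expressed through their defining integrals the splitting is just additivity of the integral over $\overline{\Omega(\cdot)}=\overline{\Omega_1(\cdot)}\cup\overline{\Omega_2(\cdot)}$.
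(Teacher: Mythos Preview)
Your proof is correct and follows essentially the same approach as the paper: split $a$ into its $c$- and $d$-parts, handle $c$ by direct additivity of the spatial integral, handle $d$ via its defining integral on a dense subspace of regular-in-time test functions, and pass to the limit using continuity of the extended forms and of $q_i$. The only cosmetic difference is that the paper takes $v\in\tilde{\mathcal{D}}_0$ and invokes the density of $\tilde{\mathcal{D}}_0$ in $H^{1/2}_{L^2(\Omega)}(\mathbb{R}_+)$ (\cref{lem:density2}) to close the $d$-splitting separately, whereas you take $v\in U$ and invoke the $\Omega$-analogue of \cref{lem:density3} to close the full $a$-splitting at once; both density statements are available and either route works.
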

\begin{proof}
First, let $u\in H^{1/2}_{(0,\cdot)}\bigl(\mathbb{R}_+;L^2(\Omega)\bigr)$ and consider the restriction operator
\begin{equation*}
q_{i,0}\colon H^{1/2}\bigl(\mathbb{R}_+; L^2(\Omega(0))\bigr)\to H^{1/2}(\mathbb{R}_+; L^2(\Omega_i(0))),
\end{equation*} 
which is continuous. The same holds if $H^{1/2}$ is replaced by $H_{(0,\cdot)}^{1/2}$. For $v\in\phi_-(\tilde{\mathcal{D}}_{0})$ we have $q_{i,0}(\partial_t v)=\partial_t(q_{i,0}v)$ and 
\begin{equation*}
\begin{aligned}
 d(u, v)&=-\int_{\R_+}\int_{\Omega(0)}u\partial_t v |J_t|\,\mathrm{d}x_0\mathrm{d}t=\sum_{i=1}^2-\int_{\R_+}\int_{\Omega_i(0)} u\partial_t(q_{i,0} v) |J_t|\,\mathrm{d}x_0\mathrm{d}t\\
        &=\sum_{i=1}^2d_i\bigl(q_{i,0}u,q_{i,0} v\bigr).
 \end{aligned}
\end{equation*} 
By~\cref{lem:density2}, we have for every $v\in H^{1/2}_{L^2(\Omega)}(\mathbb{R}_+)$ that there exists a sequence $\{v_n\}\subset \tilde{\mathcal{D}}_{0}$ that converges to $v$. Then the continuity of $d$, $d_i$, $\phi_-$, and $q_{i,0}$ gives 
\begin{equation*}
\begin{aligned}
 d(\phi_- u, \phi_-v)&=\lim_{n\to\infty}d(\phi_- u, \phi_- v_n)=\lim_{n\to\infty}\sum_{i=1}^2d_i\bigl(q_{i,0}(\phi_- u),q_{i,0}(\phi_-v_n)\bigr)\\
 &= \sum_{i=1}^2d_i\bigl(q_{i,0} (\phi_- u),q_{i,0}(\phi_- v)\bigr).
 \end{aligned}
\end{equation*} 
Second, let $u\in L^2_{H^1_0(\Omega)}(\mathbb{R}_+)$. As $u(t)\in H^1_0\bigl(\Omega(t)\bigr)$ for a.e.\ $t\in \mathbb{R}_+$, one obtains 
\begin{equation*}
\begin{gathered}
\restr{\bigl(\nabla u (t)\bigr)}{\Omega_i(t)}=\nabla\bigl(\restr{u (t)}{\Omega_i(t)}\bigr)\quad\text{and}\\ 
\restr{u(t)}{\Omega_i(t)}=u(t)\circ \bigl(\restr{\Phi_{t}}{\Omega_i(0)}\circ\restr{\Phi_{-t}}{\Omega_i(t)}\bigr)=
\restr{\bigl(\phi_{-t}u(t)\bigr)}{\Omega_i(0)}\circ \restr{\Phi_{-t}}{\Omega_i(t)}=(q_i u)(t)
\end{gathered}
\end{equation*}
for a.e.\ $t\in \mathbb{R}_+$. Hence, for every $u,v\in L^2_{H^1_0(\Omega)}(\mathbb{R}_+)$ we have
\begin{equation*}
\begin{aligned}
 c(u, v)&=\int_{\R_+}\int_{\Omega(t)}\alpha\nabla u\cdot\nabla v+\beta uv \,\mathrm{d}x_t\mathrm{d}t
 =\sum_{i=1}^2\int_{\R_+}\int_{\Omega_i(t)}\restr{\bigl(\alpha\nabla u\cdot\nabla v+\beta uv\bigr)}{\Omega_i(t)}\,\mathrm{d}x_t\mathrm{d}t\\
 &=\sum_{i=1}^2\int_{\R_+}\int_{\Omega_i(t)}\alpha\nabla (q_iu)\cdot\nabla (q_iv)+\beta (q_iu)(q_iv) \,\mathrm{d}x_t\mathrm{d}t=\sum_{i=1}^2c_i\bigl(q_iu,q_iv\bigr).
 \end{aligned}
\end{equation*} 
Combining these results for $u\in W$ and $v\in \tilde{W}$ gives the sought-after equality. 
\end{proof}
\begin{theorem}\label{lemma:tranequiv}
    Suppose that~\cref{ass:phi,ass:eqdata,ass:tech} hold. The transmission problem is equivalent to the weak problem in the following way: If $u$ solves~\cref{eq:weakHonehalf}, then $(u_1, u_2)=(q_1u, q_2u)$ solves~\cref{eq:weaktran}. Conversely, if $(u_1, u_2)$ solves~\cref{eq:weaktran}, then $u=\phi v$ with 
    $ v=\{\phi_-u_1 \text{ on } \R_+\times\Omega_1, \phi_-u_2\text{ on } \R_+\times\Omega_2\}$ solves~\cref{eq:weakHonehalf}.
\end{theorem}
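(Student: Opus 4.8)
The plan is to derive everything from four facts already in hand: the cutting--gluing lemma \cref{lemma:paste}, the splitting $a(u,v)=\sum_{i=1}^2 a_i(q_iu,q_iv)$ from \cref{lemma:asplit}, the right-hand side splitting $\langle f,v\rangle=\sum_{i=1}^2\langle f_i,q_iv\rangle$ that is exactly \cref{ass:eqdata}(iv) once one notes $q_iv=\phi(\restr{(\phi_-v)}{\R_+\times\Omega_i(0)})$, and the defining property $T_iF_i\mu=\mu$ of the extension operators $F_i$ from \cref{eq:Fi}. I will use freely that $W_i\subset\tilde W_i$ and that the trace $T_i$ and the identity $\ker(T_i|_{\tilde W_i})=\tilde W_i^0$ extend verbatim from $W_i$ to $\tilde W_i$, since the trace estimate used to construct them does not see the vanishing-at-zero condition; in particular every $v_i\in\tilde W_i^0$ has $T_iv_i=0$.

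For the forward implication I would set $u_i=q_iu$. Then \cref{lemma:paste} immediately gives $(u_1,u_2)\in W_1\times W_2$ together with $T_1u_1=T_2u_2$, i.e.\ the second line of \cref{eq:weaktran}. To get the first line, fix $v_i\in\tilde W_i^0$ and extend it by zero across $\Gamma$ to a function $\tilde v_i\in\tilde W$ --- admissible by \cref{lemma:paste} applied with the other component set to zero, since both interface traces then vanish --- so that $q_i\tilde v_i=v_i$ and $q_j\tilde v_i=0$ for $j\neq i$; testing \cref{eq:weakHonehalf} with $\tilde v_i$ and using \cref{lemma:asplit} and \cref{ass:eqdata}(iv) collapses the identity to $a_i(u_i,v_i)=\langle f_i,v_i\rangle$. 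For the third line, fix $\mu\in Z$; since $T_1F_1\mu=\mu=T_2F_2\mu$, \cref{lemma:paste} glues $(F_1\mu,F_2\mu)$ into $\tilde\mu\in W\subset\tilde W$ with $q_i\tilde\mu=F_i\mu$, and testing \cref{eq:weakHonehalf} with $\tilde\mu$ yields $\sum_i a_i(u_i,F_i\mu)=\sum_i\langle f_i,F_i\mu\rangle$, which is the third line. Hence $(u_1,u_2)$ solves \cref{eq:weaktran}.

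For the converse I would start from a solution $(u_1,u_2)$ of \cref{eq:weaktran}; since $T_1u_1=T_2u_2$, \cref{lemma:paste} glues it into $u=\phi v\in W$ with $q_iu=u_i$, and it remains only to verify $a(u,w)=\langle f,w\rangle$ for every $w\in\tilde W$, after which uniqueness of the solution of \cref{eq:weakHonehalf} (which follows from the coercivity bound analogous to \cref{eq:amon2}) identifies $u$ with it. Given $w$, I would put $\mu=T_1q_1w=T_2q_2w\in Z$ (the equality again by \cref{lemma:paste}) and split $q_iw=w_i+F_i\mu$ with $w_i:=q_iw-F_i\mu$; then $T_iw_i=\mu-\mu=0$, so $w_i\in\tilde W_i^0$. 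The first line of \cref{eq:weaktran} applied to $w_i$, plus the third line applied to $\mu$, summed over $i$, give $\sum_i a_i(u_i,q_iw)=\sum_i\langle f_i,q_iw\rangle$; finally \cref{lemma:asplit} turns the left side into $a(u,w)$ and \cref{ass:eqdata}(iv) turns the right side into $\langle f,w\rangle$.

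The argument is essentially bookkeeping, so the only friction I expect is making sure that cutting, zero-extension and gluing never leave the correct fractional-in-time spaces --- but this is exactly what \cref{lemma:paste} is for --- and recognising that the coupled third line of \cref{eq:weaktran}, built from the interface extensions $F_i\mu$, is precisely the ingredient needed to accommodate test functions $w\in\tilde W$ whose pieces $q_iw$ do not vanish on $\Gamma$: the decomposition $q_iw=w_i+F_i\mu$ peels off that interface contribution and reduces matters to the two uncoupled equations on $\tilde W_i^0$ together with the coupled equation on the range of $F_i$.
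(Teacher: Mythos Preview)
Your proof is correct and follows essentially the same route as the paper: both directions rely on \cref{lemma:paste} for cutting and gluing, \cref{lemma:asplit} for splitting $a$, \cref{ass:eqdata}(iv) for splitting $\langle f,\cdot\rangle$, and in the converse direction the decomposition $q_iw=(q_iw-F_i\mu)+F_i\mu$ with $\mu=T_iq_iw$ to reduce an arbitrary test function to the two uncoupled equations on $\tilde W_i^0$ plus the coupled interface identity. The only cosmetic difference is that the paper cites \cref{eq:L2Videntities} explicitly to justify $q_iw-F_i\mu\in\tilde W_i^0$, whereas you phrase this as $\ker(T_i|_{\tilde W_i})=\tilde W_i^0$; your remark about uniqueness of the solution to \cref{eq:weakHonehalf} is harmless but unnecessary, since the theorem only asserts that the glued $u$ \emph{solves} \cref{eq:weakHonehalf}.
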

\begin{proof}
    Suppose that $u\in W$ solves~\cref{eq:weakHonehalf}. Then
    \begin{equation*}
        (u_1, u_2)=(q_1u, q_2u)\in W_1\times W_2
    \end{equation*}
    and $T_1u_1=T_2u_2$ by~\cref{lemma:paste}. Moreover, let $v_i=\phi w_i\in \tilde{W}_i^0$ with $w_i=\{\phi_-v_i \text{ on } \R_+\times\Omega_i, 0\text{ on } \R_+\times\Omega_{3-i}\}$ for $i=1,2$. It follows by~\cref{lemma:paste} and~\cref{eq:L2Videntities} that $w_i\in \tilde{W}$. Therefore, by~\cref{ass:eqdata,lemma:asplit} we have
    \begin{align*}
        a_i(u_i, v_i)&=a_i(q_iu, q_iw_i)=a_i(q_iu, q_iw_i)+a_{3-i}(q_{3-i}u, q_{3-i}w_i)\\
        &=a(u, w_i)=\langle f, w_i\rangle\\
        &=\langle f_i, q_iw_i\rangle+\langle f_{3-i}, q_{3-i}w_i\rangle
        =\langle f_i, v_i\rangle.
    \end{align*}
    Now let $\mu\in Z$ and define $v=\phi w$ by $w=\{\phi_-F_1\mu \text{ on } \R_+\times\Omega_1(0), \phi_-F_2\mu\text{ on } \R_+\times\Omega_2(0)\}$. Since $T_1F_1\mu=T_2F_2\mu$ we have that $v\in W$ according to~\cref{lemma:paste}. Thus~\cref{ass:eqdata,lemma:asplit} again yield
    \begin{align*}
        \sum_{i=1}^2 a_i(u_i, F_i\mu)=a(u, v)=\langle f, v\rangle=\sum_{i=1}^2 \langle f_i, F_i\mu\rangle
    \end{align*}
    and we have now shown that $u$ satisfies all three equations of~\cref{eq:weaktran}. 
    
    Conversely, suppose that $(u_1, u_2)$ solves~\cref{eq:weaktran} and define $u=\phi v$ with $v=\{\phi_-u_1 \text{ on } \R_+\times\Omega_1(0), \phi_-u_2\text{ on } \R_+\times\Omega_2(0)\}$. Since $T_1u_1=T_2u_2$ we have that $u\in W$ according to~\cref{lemma:paste}. Now let $v\in \tilde{W}$ and define $(v_1, v_2)=(q_1v, q_2v)$, which satisfies $v_i\in \tilde{W}_i$ and $T_1v_1=T_2v_2$ again by~\cref{lemma:paste}. If we define $\mu=T_iv_i$ then $v_i-F_i\mu\in \tilde{W}_i^0$ by~\cref{eq:L2Videntities}. Therefore, by~\cref{ass:eqdata,lemma:asplit}, we have
    \begin{align*}
        a(u, v)&=\sum_{i=1}^2 a_i(u_i, v_i)=\sum_{i=1}^2 a_i(u_i, v_i-F_i\mu)+a_i(u_i, F_i\mu)\\
        &=\sum_{i=1}^2 \langle f_i, v_i-F_i\mu\rangle+\langle f_i, F_i\mu\rangle=\sum_{i=1}^2 \langle f_i, v_i\rangle=\langle f, v\rangle,
    \end{align*}
    which means that $u$ solves~\cref{eq:weakHonehalf}.
\end{proof}

\section{Steklov--Poincaré operators and convergence of the Robin--Robin scheme on evolving domains}\label{sec:SP}

The Steklov--Poincaré operators $S_i,S\colon Z\to Z^*$ are defined as
\begin{displaymath}
    \langle S_i\eta,\mu\rangle=a_i(F_i\eta, F_i\mu)
\end{displaymath}
and $S=S_1+S_2$. Moreover, we define the functionals $\chi_i,\chi\in Z^*$ as
\begin{displaymath}
    \langle \chi_i,\mu\rangle=\langle f_i, F_i\mu\rangle-a_i(G_if_i, F_i\mu),
\end{displaymath}
and $\chi =\chi_1+\chi_2$.
\begin{remark}
    Although the bilinear forms $a_i$ have different trial and test spaces, the Steklov--Poincaré operators have the same trial and test spaces, i.e., $S_i\colon Z\to Z^*$. This is due to the fact that the spaces $W_i$ and $\Tilde{W}_i$ share the trace space $Z$, see~\cite[p. 507]{costabel90}.
\end{remark}
We will now prove the main properties of the Steklov--Poincaré operators, namely that they are bounded and coercive. We first have the following lemma, which is important for the coercivity of the Steklov--Poincaré operators.

\begin{lemma}\label{lem:Fibound}
If~\cref{ass:phi,ass:eqdata,ass:tech} hold then 
\begin{equation*}
\|F_i\eta\|_{W_i}\leq C \|F_i\eta\|_{L^2_{H^1(\Omega_i)}(\R_+)}
\end{equation*}
for every $\eta\in Z$.
\end{lemma}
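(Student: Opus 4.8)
The plan is to reduce the inequality to a single bound on the weak material time derivative of $F_i\eta$ and then invoke the interpolation embedding~\cref{eq:keyinterpolate}. Write $u=F_i\eta$. By the construction of $F_i$ we have $u\in W_i$, $\phi_-u\in Q_i$, and $a_i(u,v)=0$ for all $v\in\tilde{W}_i^0$; since $H^1(\R_+)\hookrightarrow H^{1/2}(\R_+)$ gives $U_i^0\subseteq\tilde{W}_i^0$, the equation holds in particular for every $v\in U_i^0$. Now $\|u\|_{W_i}=\|\phi_-u\|_{\phi_-(W_i)}$ and $\|\phi_-u\|_{\phi_-(W_i)}\le C\|\phi_-u\|_{Q_i}$ by~\cref{eq:keyinterpolate}. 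Since moreover $\|\phi_-u\|_{L^2(\R_+;V_i(0))}=\|u\|_{L^2_{H^1(\Omega_i)}(\R_+)}$ and $\|\phi_-u\|_{L^2(\R_+;H^{-1}(\Omega_i(0)))}\le C\|u\|_{L^2_{H^1(\Omega_i)}(\R_+)}$ trivially, the whole statement reduces to proving
\[
\|\partial_t(\phi_-u)\|_{L^2(\R_+;H^{-1}(\Omega_i(0)))}\le C\|u\|_{L^2_{H^1(\Omega_i)}(\R_+)}.
\]

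For this estimate I would rerun the higher-regularity argument from the proof of~\cref{lem:homexist}, i.e.\ the computation~\cref{eq:weaktimederivative1,eq:weaktimederivative2}, with $X=H^1_0(\Omega_i)$ and, crucially, with the right-hand side equal to zero (as $u=F_i\eta$ solves~\cref{eq:weakHonehalf_i} with $g=0$). Concretely, fixing $\varphi\in C^\infty_0(\R_+)$ and $w\in H^1_0(\Omega_i(0))$, one sets $v=\phi(\varphi w)\in U_i^0$ and notes that $|J_-|v=\phi\bigl((1/|J|)\varphi w\bigr)\in U_i^0$ by~\cref{lem:Jvtest} (applied with $M=\Omega_i$). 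Splitting $\partial_t(\phi_{-t}v)=\partial_t(|J_t|)\,\phi_{-t}(|J_-|v)+|J_t|\,\partial_t\bigl(\phi_{-t}(|J_-|v)\bigr)$, using Jacobi's formula~\cref{eq:diffOfJ}, and replacing $d_i(\phi_-u,\phi_-(|J_-|v))$ by $-c_i(u,|J_-|v)$ via the equation, one identifies $\partial_t(\phi_-u)$ with an element $p\in L^2(\R_+;H^{-1}(\Omega_i(0)))$ obeying the bound~\cref{eq:weaktimederivative2} with its $\|g\|$-term deleted, that is, $\|p\|_{L^2(\R_+;H^{-1}(\Omega_i(0)))}\le C\bigl(\|u\|_{L^2_{H^1(\Omega_i)}(\R_+)}+\|u\|_{L^2_{L^2(\Omega_i)}(\R_+)}\bigr)\le C\|u\|_{L^2_{H^1(\Omega_i)}(\R_+)}$, with constants depending only on $\|\alpha\|_{L^\infty}$, $\|\beta\|_{L^\infty}$ and the $W^{1,\infty}$-norms of $J$ and $1/J$ (finite by~\cref{ass:eqdata,ass:tech}). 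The density of the tensor functions $\varphi w$ in $L^2(\R_+;H^1_0(\Omega_i(0)))$ then turns the tested identity into the displayed $H^{-1}$-norm bound.

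The one point requiring care — and the main, though mild, obstacle — is that $F_i\eta$ lies in $W_i$ rather than in the space $L^2_{H^1_0(\Omega_i)}(\R_+)$ containing the solution in~\cref{lem:homexist}: one must check that the computation~\cref{eq:weaktimederivative1} nowhere uses the homogeneous boundary condition of the trial function, only the membership $u\in L^2_{H^1(\Omega_i)}(\R_+)$, the validity of the equation against $|J_-|v\in U_i^0\subseteq\tilde{W}_i^0$, and the $W^{1,\infty}$-regularity of $J$. Granting this, every occurrence of $\|u\|_{L^2_X(\R_+)}$ in~\cref{eq:weaktimederivative2} is simply replaced by $\|u\|_{L^2_{H^1(\Omega_i)}(\R_+)}$, and combining the displayed time-derivative bound with the reduction of the first paragraph yields $\|F_i\eta\|_{W_i}\le C\|\phi_-(F_i\eta)\|_{Q_i}\le C\|F_i\eta\|_{L^2_{H^1(\Omega_i)}(\R_+)}$, as claimed.
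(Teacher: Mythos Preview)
Your proof is correct and follows essentially the same approach as the paper: use the embedding~\cref{eq:keyinterpolate} to reduce to a bound on $\|\partial_t(\phi_-F_i\eta)\|_{L^2(\R_+;H^{-1}(\Omega_i(0)))}$, then rerun the computation~\cref{eq:weaktimederivative1,eq:weaktimederivative2} with $g=0$. Your explicit remark that the trial function $u=F_i\eta$ lies only in $W_i$ (not $L^2_{H^1_0(\Omega_i)}(\R_+)$) but that this is immaterial because only the test function $|J_-|\phi v\in U_i^0$ needs the homogeneous boundary condition is a point the paper leaves implicit.
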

\begin{proof}
By the definition of~$F_i$ in~\cref{eq:Fi} one has $\phi_-F_i\eta\in Q_i$, and the embedding~\cref{eq:keyinterpolate} then yields the bound 
\begin{equation*}
        \|F_i\eta\|_{W_i}\leq C\bigl(\|\phi_-F_i\eta\|^2_{L^2(\mathbb{R}_+;H^1(\Omega_i(0)))}+\|\partial_t\phi_-F_i\eta\|^2_{L^2(\mathbb{R}_+;H^{-1}(\Omega_i(0)))}\bigr)^{1/2}. 
\end{equation*}
Furthermore, as $u=F_i\eta$ is a solution to~\cref{eq:weakHonehalf_i}, the same calculations as in~\cref{eq:weaktimederivative1} together with the bound~\cref{eq:weaktimederivative2} for $g=0$  gives 
\begin{equation*}
\|\partial_t\phi_-F_i\eta\|_{L^2(\mathbb{R}_+;H^{-1}(\Omega_i(0)))}\leq C\|F_i\eta\|_{L^2_{H^1(\Omega_i)}(\R_+)}.
\end{equation*}
Combining these results gives the desired estimate.
\end{proof}
\begin{theorem}\label{lemma:sp}
Suppose that~\cref{ass:phi,ass:eqdata,ass:tech} hold. Then the Steklov--Poincaré operators $S_i$ are bounded and also coercive, i.e.,
\begin{equation}\label{eq:weakcoer}
    \langle S_i\eta, \eta\rangle\geq c\|\eta\|^2_Z\quad\text{for all }\eta\in Z.
\end{equation}
Moreover, a similar result holds for $S$.
\end{theorem}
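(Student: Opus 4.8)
The boundedness of $S_i$ follows quickly from the boundedness of $F_i \colon Z \to W_i$ (see \cref{eq:Fi}, via \cref{cor:weakHonehalf} and the bound~\cref{eq:weaksoletabound}) together with the continuity of the extended bilinear form $a_i\colon W_i \times \tilde W_i \to \mathbb{R}$ from \cref{cor:aextend}: indeed $|\langle S_i \eta, \mu\rangle| = |a_i(F_i\eta, F_i\mu)| \le C\|F_i\eta\|_{W_i}\|F_i\mu\|_{\tilde W_i} \le C\|\eta\|_Z\|\mu\|_Z$. So the real content is the coercivity estimate~\cref{eq:weakcoer}. The plan is as follows. First I would test the definition of $S_i$ on the diagonal and use the lower bound for the extended form $a_i$ from \cref{eq:amon2} in \cref{cor:aextend}:
\begin{equation*}
\langle S_i\eta,\eta\rangle = a_i(F_i\eta, F_i\eta) \ge c\|F_i\eta\|^2_{L^2_{H^1(\Omega_i)}(\mathbb{R}_+)}.
\end{equation*}
Thus it suffices to bound $\|\eta\|_Z$ by (a constant times) $\|F_i\eta\|_{L^2_{H^1(\Omega_i)}(\mathbb{R}_+)}$.

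To do this I would chain the following continuity facts. Since $T_i F_i \eta = \eta$ and $T_i\colon W_i \to Z$ is bounded, we have $\|\eta\|_Z = \|T_i F_i\eta\|_Z \le C\|F_i\eta\|_{W_i}$. Now \cref{lem:Fibound} gives precisely $\|F_i\eta\|_{W_i} \le C\|F_i\eta\|_{L^2_{H^1(\Omega_i)}(\mathbb{R}_+)}$. Combining these two inequalities with the displayed lower bound yields
\begin{equation*}
\|\eta\|_Z^2 \le C\|F_i\eta\|^2_{L^2_{H^1(\Omega_i)}(\mathbb{R}_+)} \le \tfrac{C}{c}\,\langle S_i\eta,\eta\rangle,
\end{equation*}
which is~\cref{eq:weakcoer}. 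The key structural point — and the step I expect to be the crux — is \cref{lem:Fibound}: the naive bound on $\|F_i\eta\|_{W_i}$ would involve the $H^{1/2}_{(0,\cdot)}$-norm in time, which one cannot control by the $L^2_{H^1}$-norm alone; the lemma circumvents this by exploiting that $\phi_-F_i\eta \in Q_i$ and using the embedding $Q_i \hookrightarrow \phi_-(W_i)$ from \cref{eq:keyinterpolate}, so that the $W_i$-norm is controlled by the parabolic energy norm $\|\cdot\|_{L^2(\mathbb{R}_+;H^1)} + \|\partial_t(\cdot)\|_{L^2(\mathbb{R}_+;H^{-1})}$, and the time-derivative term is in turn estimated by $\|F_i\eta\|_{L^2_{H^1(\Omega_i)}(\mathbb{R}_+)}$ via the computation in \cref{eq:weaktimederivative1,eq:weaktimederivative2} with $g=0$. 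This is the only place where the harmonic-extension character of $F_i\eta$ (i.e. that it solves the PDE with zero right-hand side) is genuinely used.

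For the claim about $S = S_1 + S_2$: boundedness is immediate from that of $S_1$ and $S_2$, and coercivity follows by summing, $\langle S\eta,\eta\rangle = \langle S_1\eta,\eta\rangle + \langle S_2\eta,\eta\rangle \ge 2c\|\eta\|_Z^2$, since each summand is already coercive with the same constant structure (alternatively one only needs one of the two summands to be coercive and the other merely nonnegative, which also holds by \cref{eq:amon2}). I would also remark that coercivity together with boundedness makes $S_i$ and $S$ invertible by Lax--Milgram, which is what is needed for the well-posedness of the Steklov--Poincaré formulation and, subsequently, for the convergence analysis of the Robin--Robin scheme.
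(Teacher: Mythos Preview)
Your proposal is correct and follows essentially the same route as the paper's proof: boundedness via the continuity of $a_i$ on $W_i\times\tilde W_i$ and of $F_i\colon Z\to W_i$, and coercivity via the chain $\langle S_i\eta,\eta\rangle=a_i(F_i\eta,F_i\eta)\ge c\|F_i\eta\|^2_{L^2_{H^1(\Omega_i)}(\mathbb{R}_+)}\ge c\|F_i\eta\|^2_{W_i}\ge c\|T_iF_i\eta\|^2_Z=c\|\eta\|^2_Z$, using \cref{lem:Fibound} for the middle step. Your discussion of why \cref{lem:Fibound} is the crux is accurate and matches the paper's reliance on it.
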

\begin{proof}
By~\cref{cor:aextend} and the fact that $F_i\colon Z\to W_i$ is bounded we have
\begin{equation*}
    \bigl|\langle S_i\eta, \mu\rangle \bigr|=\bigl|a_i(F_i\eta, F_i\mu)\bigr|\leq C\|F_i\eta\|_{W_i}\|F_i\mu\|_{\tilde{W}_i}\leq C\|F_i\eta\|_{W_i}\|F_i\mu\|_{W_i}\leq C\|\eta\|_{Z}\|\mu\|_{Z}
\end{equation*}
    for all $\eta,\mu\in Z$, which shows that $S_i$ is bounded. It follows from~\cref{eq:amon2,lem:Fibound} that
    \begin{align*}
        \langle S_i\eta, \eta\rangle=a_i(F_i\eta, F_i\eta)\geq c\|F_i\eta\|_{L^2_{H^1(\Omega_i)}(\R_+)}^2\geq c\|F_i\eta\|_{W_i}^2\geq c\|T_iF_i\eta\|_Z^2= c\|\eta\|_Z^2
    \end{align*}
    for all $\eta\in Z$. Thus $S_i$ is coercive. The result for $S$ follows by summing the inequalities for~$S_i$.
\end{proof}
The properties of $S_i$ and $S$ immediately yield that the operators satisfy the assumptions of the Lax--Milgram theorem and therefore we have the following corollary.
\begin{corollary}\label{cor:spiso}
    The Steklov--Poincaré operators $S_i, S\colon Z\rightarrow Z^*$ are isomorphisms.
\end{corollary}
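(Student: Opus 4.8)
The plan is to apply the Lax--Milgram theorem to the bounded, coercive bilinear forms induced by the Steklov--Poincar\'e operators. First I would record that $Z = H^{1/4}_{L^2(\Gamma)}(\mathbb{R}_+)\cap L^2_{\Lambda}(\mathbb{R}_+)$ is a separable Hilbert space: this follows from \cref{lem:Ysep} applied to the compatible pair $(\Lambda,\psi)$ of \cref{lem:compatLambda} together with the intersection construction of \cref{sec:abs}. Next, define the bilinear form $b_i\colon Z\times Z\to\mathbb{R}$ by $b_i(\eta,\mu) = \langle S_i\eta,\mu\rangle = a_i(F_i\eta, F_i\mu)$. This is indeed bilinear: $F_i\colon Z\to W_i$ is linear by its definition in \cref{eq:Fi}, and $a_i$ is bilinear on $W_i\times\tilde W_i$ after the continuous extension of \cref{cor:aextend}, so the composition is bilinear on $Z\times Z$. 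The point that needs a word of care is the identification of the abstract Lax--Milgram isomorphism $Z\to Z^*$ with the operator $S_i$ itself, but this is immediate from the defining identity $\langle S_i\eta,\mu\rangle = a_i(F_i\eta, F_i\mu)$.

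Then I would simply invoke the two estimates already established in \cref{lemma:sp}: boundedness, $|b_i(\eta,\mu)| = |\langle S_i\eta,\mu\rangle| \le C\|\eta\|_Z\|\mu\|_Z$, and coercivity, $b_i(\eta,\eta) = \langle S_i\eta,\eta\rangle \ge c\|\eta\|_Z^2$, which is \cref{eq:weakcoer}. By the Lax--Milgram theorem, for every $\ell\in Z^*$ there is a unique $\eta\in Z$ with $b_i(\eta,\mu) = \langle \ell,\mu\rangle$ for all $\mu\in Z$, that is, $S_i\eta = \ell$, and moreover $\|\eta\|_Z \le c^{-1}\|\ell\|_{Z^*}$. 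Hence $S_i$ is a bijection from $Z$ onto $Z^*$ with bounded inverse, i.e.\ an isomorphism. The same reasoning applies verbatim to $S = S_1 + S_2$, using that $S$ is bounded (by the triangle inequality) and coercive with constant the sum of the constants for $S_1$ and $S_2$, as asserted in the final sentence of \cref{lemma:sp}.

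There is no genuine obstacle at this stage: the substantive work is the coercivity bound \cref{eq:weakcoer}, which already rests on the lower bound \cref{eq:amon2} from \cref{cor:aextend} and on \cref{lem:Fibound} (the latter being where the fractional-in-time $H^{1/2}$-setting pays off, since it lets one control $\|F_i\eta\|_{W_i}$ by $\|F_i\eta\|_{L^2_{H^1(\Omega_i)}(\mathbb{R}_+)}$ and in turn by $\|\eta\|_Z = \|T_iF_i\eta\|_Z$). Given these inputs, \cref{cor:spiso} is an immediate consequence of Lax--Milgram, so the proof should be short: set up $b_i$, quote \cref{lemma:sp}, apply Lax--Milgram, and remark that the argument carries over to $S$.
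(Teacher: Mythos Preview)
Your proposal is correct and matches the paper's approach exactly: the paper states just before \cref{cor:spiso} that the boundedness and coercivity from \cref{lemma:sp} immediately give the hypotheses of the Lax--Milgram theorem, whence the corollary. Your write-up simply unpacks this in more detail (checking that $Z$ is Hilbert, that $b_i$ is bilinear, and that the Lax--Milgram isomorphism coincides with $S_i$), but the argument is the same.
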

The Steklov--Poincaré equation is to find $\eta\in Z$ such that
\begin{equation}\label{eq:speq}
    S\eta=\chi.
\end{equation}
The following result is an immediate consequence of the definitions of the Steklov--Poincaré operators.
\begin{lemma}\label{lemma:tpspeq}
    Suppose that~\cref{ass:phi,ass:eqdata,ass:tech} hold. The Steklov--Poincaré equation is equivalent to the transmission problem in the following way: If $(u_1, u_2)$ solves~\cref{eq:weaktran} then $\eta=T_iu_i$ solves~\cref{eq:speq}. Conversely, if $\eta$ solves~\cref{eq:speq}, then $(F_1\eta+G_1f_1, F_2\eta+G_2f_2)$ solves~\cref{eq:weaktran}.
\end{lemma}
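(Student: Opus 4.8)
The plan is to reduce the equivalence to the observation that any solution $(u_1,u_2)$ of \cref{eq:weaktran} is necessarily of the explicit form $u_i=F_i\eta+G_if_i$ with $\eta=T_iu_i$, and that once this is known the third line of \cref{eq:weaktran}, tested against $\mu\in Z$, coincides verbatim with $\langle S\eta-\chi,\mu\rangle=0$. Both directions then follow by inserting this representation and using bilinearity of $a_i$, the trace normalizations $T_iF_i=\mathrm{id}_Z$ and $T_iG_i=0$ (the latter because $G_i$ maps into $W_i^0$), and the defining identities $a_i(F_i\eta,v_i)=0$ and $a_i(G_if_i,v_i)=\langle f_i,v_i\rangle$ for $v_i\in\tilde{W}_i^0$.

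For the forward implication, assume $(u_1,u_2)$ solves \cref{eq:weaktran} and set $\eta:=T_1u_1=T_2u_2\in Z$. Put $z_i:=u_i-F_i\eta-G_if_i\in W_i$. The second line of \cref{eq:weaktran} together with $T_iF_i\eta=\eta$ and $T_iG_if_i=0$ gives $T_iz_i=0$, hence $z_i\in W_i^0$; and the first line of \cref{eq:weaktran} together with the defining identities for $F_i$ and $G_i$ gives $a_i(z_i,v_i)=0$ for all $v_i\in\tilde{W}_i^0$. By the uniqueness part of \cref{cor:weakHonehalf} --- equivalently, by the coercivity bound \cref{eq:amon2} --- we conclude $z_i=0$, i.e.\ $u_i=F_i\eta+G_if_i$. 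Since $W_i\hookrightarrow\tilde{W}_i$, every pairing with $F_i\mu$ below is well defined, and substituting $u_i=F_i\eta+G_if_i$ into the third line of \cref{eq:weaktran} yields, for every $\mu\in Z$,
\[
0=\sum_{i=1}^2\bigl(a_i(u_i,F_i\mu)-\langle f_i,F_i\mu\rangle\bigr)=\sum_{i=1}^2\Bigl(a_i(F_i\eta,F_i\mu)-\langle f_i,F_i\mu\rangle+a_i(G_if_i,F_i\mu)\Bigr)=\langle S\eta-\chi,\mu\rangle.
\]
Hence $S\eta=\chi$.

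For the converse, let $\eta\in Z$ solve $S\eta=\chi$ and set $u_i:=F_i\eta+G_if_i\in W_i$. For $v_i\in\tilde{W}_i^0$ we have $a_i(u_i,v_i)=a_i(F_i\eta,v_i)+a_i(G_if_i,v_i)=0+\langle f_i,v_i\rangle$, which is the first line of \cref{eq:weaktran}; next $T_iu_i=T_iF_i\eta+T_iG_if_i=\eta+0=\eta$ is independent of $i$, which is the second line; and for every $\mu\in Z$, the same computation as in the forward direction gives
\[
\sum_{i=1}^2\bigl(a_i(u_i,F_i\mu)-\langle f_i,F_i\mu\rangle\bigr)=\langle S\eta-\chi,\mu\rangle=0,
\]
which is the third line. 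Thus $(u_1,u_2)$ solves \cref{eq:weaktran}.

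The sole step that goes beyond unwinding definitions is the identification $u_i=F_i\eta+G_if_i$ in the forward direction; it relies on uniqueness of solutions to the subdomain problem \cref{eq:weakHonehalf_i}, which is precisely the coercivity estimate \cref{eq:amon2} derived from the temporal $H^{1/2}$ lower bound \cref{lem:dextend}. Two minor bookkeeping points should also be checked: that $f_i$, a priori only in $L^2_{V_i^*}(\R_+)$, may be fed to $G_i$ (via the restriction remark following the definition of $G_i$), and that the pairings $\langle f_i,v_i\rangle$ and $\langle f_i,F_i\mu\rangle$ are unambiguous since $v_i\in\tilde{W}_i^0\subset L^2_{H^1_0(\Omega_i)}(\R_+)$ and $F_i\mu\in W_i\subset L^2_{V_i}(\R_+)$.
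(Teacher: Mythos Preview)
Your proof is correct and supplies exactly the details the paper omits: the paper simply states that the lemma ``is an immediate consequence of the definitions of the Steklov--Poincar\'e operators'' and gives no argument. Your key observation---that in the forward direction one must first identify $u_i=F_i\eta+G_if_i$ via the uniqueness in \cref{cor:weakHonehalf} before the third line of \cref{eq:weaktran} can be matched with $\langle S\eta-\chi,\mu\rangle$---is precisely the non-definitional step hidden in the paper's one-line claim, and you handle it cleanly. One small quibble on bookkeeping: the ``restriction remark'' after the definition of $G_i$ goes from $L^2_{H^{-1}(\Omega_i)}$ to $L^2_{V_i^*}$, not the other way; the correct justification for feeding $f_i\in L^2_{V_i^*}(\R_+)$ to $G_i$ is that \cref{cor:weakHonehalf} already provides the solution operator for data in $L^2_{V_i^*}(\R_+)$, so $G_i$ extends naturally.
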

Any non-overlapping domain decomposition method can be formulated as an interface iteration to solve~\cref{eq:speq}, see, e.g.,~\cite{EEEH24,EEEH25,quarteroni}. We first consider the Robin--Robin method. For a parameter $s_0>0$ and an initial guess $\eta_2^0\in Z$, the interface iteration of the Robin--Robin method is given by finding $(\eta_1^n, \eta_2^n)\in Z\times Z$ for $n=1,2,\dots$ such that
\begin{equation}\label{eq:pr}
	\left\{\begin{aligned}
	     (s_0\mathcal{R}+S_1)\eta_1^n&=(s_0\mathcal{R}-S_2)\eta_2^{n-1}+\chi & & \\
	     (s_0\mathcal{R}+S_2)\eta_2^n&=(s_0\mathcal{R}-S_1)\eta_1^n+\chi. & &
	\end{aligned}\right.
\end{equation}
Here, $\mathcal{R}$ denotes the Riesz isomorphism defined as
\begin{equation*}
    \mathcal{R}\colon L^2_{L^2(\Gamma)}(\R_+)\to L^2_{L^2(\Gamma)}(\R_+)^*, \qquad \mu\mapsto (\mu, \cdot)_{L^2_{L^2(\Gamma)}(\R_+)}.
\end{equation*}
The following result follows as in the case of elliptic problems~\cite[Lemma 6.3]{EHEE22}. To be precise, we are referring to the weak formulation of~\cref{eq:robin}, which is of the same form as in the case of elliptic problems~\cite[(5.2)]{EHEE22}.
\begin{lemma}\label{lemma:rrpr}
Suppose that~\cref{ass:phi,ass:eqdata,ass:tech} hold. The Robin--Robin method and the Peaceman--Rachford iteration are equivalent in the following way: If $(u_1^n, u_2^n)_{n\geq 1}$ solves \cref{eq:robin} then $(\eta_1^n, \eta_2^n)_{n\geq 1}$, defined as $\eta_i^n=T_iu^n_i$, solves \cref{eq:pr} with $\eta_2^0=T_2u^0_2$. Conversely, if $(\eta_1^n, \eta_2^n)_{n\geq 1}$ solves \cref{eq:pr} then $(u_1^n, u_2^n)_{n\geq 1}$, defined as $u_i^n=F_i\eta_i^n+G_if_i$, solves \cref{eq:robin} with $u_2^0=F_2\eta_2^0+G_2f_2$.
\end{lemma}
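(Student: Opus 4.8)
The plan is to transcribe the elliptic argument of \cite[Lemma~6.3]{EHEE22} into the present evolving-domain $H^{1/2}$-framework; the point is that all abstract ingredients needed there — existence and, crucially, \emph{uniqueness} of the lifted parabolic solution on each $\Omega_i$, the solution operators $F_i,G_i$, and the coercive Steklov--Poincar\'e operators — are now available through \cref{cor:weakHonehalf,lemma:sp} and \cref{eq:Fi}. The first step is to make precise the weak formulation of \cref{eq:robin} referred to in the statement: testing the Robin conditions against $v_1\in\tilde W_1$ and using $\nu_2=-\nu_1$ on $\Gamma$, the $n$-th Robin step on $\Omega_1$ reads
\begin{equation*}
\begin{aligned}
a_1(u_1^n,v_1)+s_0\,(T_1u_1^n,T_1v_1)_{L^2_{L^2(\Gamma)}(\R_+)}
&=\langle f_1,v_1\rangle+s_0\,(T_2u_2^{n-1},T_1v_1)_{L^2_{L^2(\Gamma)}(\R_+)}\\
&\quad-\bigl(a_2(u_2^{n-1},F_2T_1v_1)-\langle f_2,F_2T_1v_1\rangle\bigr),
\end{aligned}
\end{equation*}
for all $v_1\in\tilde W_1$, and symmetrically on $\Omega_2$ with $u_1^n$ in place of $u_2^{n-1}$; this is the exact evolving-domain analogue of \cite[(5.2)]{EHEE22}. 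The last bracket encodes the weak co-normal derivative of $u_2^{n-1}$ across $\Gamma$; it depends only on $T_2u_2^{n-1}$ because restricting the equation for $u_2^{n-1}$ to $v\in\tilde W_2^0$ gives $a_2(u_2^{n-1},v)=\langle f_2,v\rangle$ there, so that (using $W_i\hookrightarrow\tilde W_i$ and $T_iF_i=\mathrm{id}$) $F_2T_1v_1$ may be replaced by any element of $\tilde W_2$ with the same trace.

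Next I would isolate the two algebraic facts on which everything rests. First, any $u_i\in W_i$ with $a_i(u_i,v)=\langle f_i,v\rangle$ for all $v\in\tilde W_i^0$ admits the decomposition $u_i=F_i(T_iu_i)+G_if_i$, since $F_i(T_iu_i)+G_if_i$ solves the same equation with the same trace and \cref{cor:weakHonehalf} gives uniqueness; in particular, restricting the weak Robin step to $\tilde W_i^0$ (where the $\Gamma$-terms and the bracket vanish) shows each $u_i^n$ has this property, so $u_i^n=F_i\eta_i^n+G_if_i$ with $\eta_i^n:=T_iu_i^n$, and likewise $u_2^0=F_2\eta_2^0+G_2f_2$. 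Second, for such $u_i$ and every $\mu\in Z$,
\begin{equation*}
a_i(u_i,F_i\mu)-\langle f_i,F_i\mu\rangle=\langle S_i(T_iu_i)-\chi_i,\mu\rangle,
\end{equation*}
which is immediate from $u_i=F_i(T_iu_i)+G_if_i$, the definition $\langle S_i\eta,\mu\rangle=a_i(F_i\eta,F_i\mu)$, and the definition of $\chi_i$.

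The forward implication then follows by taking $v_1=F_1\mu$ (arbitrary $\mu\in Z$, admissible since $F_1\mu\in W_1\hookrightarrow\tilde W_1$) in the weak Robin step: applying the second fact with $i=1$ on the left and $i=2$ inside the bracket, and $(T_iu_i^n,T_iF_i\mu)_{L^2_{L^2(\Gamma)}(\R_+)}=(\eta_i^n,\mu)_{L^2_{L^2(\Gamma)}(\R_+)}=\langle\mathcal R\eta_i^n,\mu\rangle$, the $\langle f_1,F_1\mu\rangle$ terms cancel and, with $\chi=\chi_1+\chi_2$, the identity collapses to $\langle(s_0\mathcal R+S_1)\eta_1^n,\mu\rangle=\langle(s_0\mathcal R-S_2)\eta_2^{n-1}+\chi,\mu\rangle$ for all $\mu$, i.e.\ the first line of \cref{eq:pr}; the second line is identical with $1$ and $2$ interchanged. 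For the converse, given $(\eta_1^n,\eta_2^n)$ solving \cref{eq:pr} I set $u_i^n:=F_i\eta_i^n+G_if_i\in W_i$, so $T_iu_i^n=\eta_i^n$ and $a_i(u_i^n,v)=\langle f_i,v\rangle$ on $\tilde W_i^0$ by construction of $F_i,G_i$; then for arbitrary $v_1\in\tilde W_1$ I write $\mu:=T_1v_1$ and split $v_1=F_1\mu+(v_1-F_1\mu)$ with $v_1-F_1\mu\in\tilde W_1^0$ (as in the proof of \cref{lemma:tranequiv}, via \cref{eq:L2Videntities}), evaluate $a_1(u_1^n,v_1)+s_0(T_1u_1^n,T_1v_1)_{L^2_{L^2(\Gamma)}(\R_+)}$ on the two pieces, and run the previous identities backwards — now reading \cref{eq:pr} as the hypothesis — to recover exactly the weak Robin step for $u_1^n$; symmetrically for $u_2^n$, and $u_2^0=F_2\eta_2^0+G_2f_2$ holds by definition.

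I expect the only genuinely fiddly parts to be bookkeeping rather than mathematics: deriving the weak form of \cref{eq:robin} with the correct sign coming from $\nu_2=-\nu_1$, checking that the co-normal-derivative bracket is a well-defined functional of the trace alone, and verifying $v_1-F_1\mu\in\tilde W_1^0$ — all handled just as in \cite{EHEE22} and in the proof of \cref{lemma:tranequiv}, now that \cref{cor:weakHonehalf} supplies the uniqueness that the $H^{1/2}$-setting was designed to provide.
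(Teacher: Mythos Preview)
Your proposal is correct and is precisely the approach the paper takes: the paper does not give a proof but simply states that the result ``follows as in the case of elliptic problems~\cite[Lemma 6.3]{EHEE22}'' once one writes the weak formulation of \cref{eq:robin} in the form of \cite[(5.2)]{EHEE22}, and your write-up is exactly that transcription, with the evolving-domain ingredients (\cref{cor:weakHonehalf}, $F_i$, $G_i$, $S_i$, $\chi_i$, and the shared trace space $Z$ for $W_i$ and $\tilde W_i$) substituted for their elliptic counterparts.
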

The second method we discuss is the Dirichlet--Neumann method. For a method parameter $s_1>0$ and an initial guess $\eta^0\in Z$, the interface iteration corresponding to the Dirichlet--Neumann method is given by finding $\eta^n\in Z$ for $n=1,2,\dots$ such that
\begin{equation}\label{eq:dn}
    \eta^n=\eta^{n-1}+s_1S_2^{-1}(\chi-S\eta^{n-1}).
\end{equation}
Finally we consider the Neumann--Neumann method. For two method parameters $s_2,s_3>0$ and an initial guess $\eta^0\in Z$, the interface iteration corresponding to the Neumann--Neumann method is given by finding $(\eta^n,\lambda_1^n,\lambda_2^n)\in Z\times Z\times Z$ for $n=1,2,\dots$ such that
\begin{equation}\label{eq:nn}
	\left\{\begin{aligned}
	     S_i\lambda_i^n&=\chi-S\eta^{n-1} & & \text{for } i=1,2,\\
	     \eta^n&=\eta^{n-1}+s_2\lambda_1^n+s_3\lambda_2^n. & &
	\end{aligned}\right.
\end{equation}
 Analogous results to~\cref{lemma:rrpr} holds for the Dirichlet--Neumann and Neumann--Neumann methods. The following result gives that the three methods are well defined.
\begin{corollary}
    Suppose that~\cref{ass:phi,ass:eqdata,ass:tech} hold. The Dirichlet--Neumann, Neumann--Neumann, and Robin--Robin methods are well defined in the sense that each step of~\cref{eq:dn,eq:nn,eq:pr} has a unique solution.
\end{corollary}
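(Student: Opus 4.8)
The statement packages three separate well-posedness claims, and the plan is to reduce each of them to inverting a bounded, coercive linear operator from $Z$ to $Z^*$, so that nothing beyond the Lax--Milgram theorem (and \cref{cor:spiso}) is needed. I would first record the two structural facts to be used: by \cref{cor:spiso} the maps $S_1,S_2,S\colon Z\to Z^*$ are isomorphisms, and the Riesz map $\mathcal{R}$, read through the continuous embedding $Z\hookrightarrow L^2_{L^2(\Gamma)}(\R_+)$, defines a bounded, symmetric, positive semidefinite operator $\mathcal{R}\colon Z\to Z^*$ with $\langle\mathcal{R}\eta,\eta\rangle=\|\eta\|^2_{L^2_{L^2(\Gamma)}(\R_+)}\geq 0$ for all $\eta\in Z$.

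For the Dirichlet--Neumann iteration \cref{eq:dn} there is in fact nothing to solve: given $\eta^{n-1}\in Z$, the element $\chi-S\eta^{n-1}$ lies in $Z^*$, so $S_2^{-1}(\chi-S\eta^{n-1})\in Z$ is well defined by \cref{cor:spiso}, and $\eta^n=\eta^{n-1}+s_1 S_2^{-1}(\chi-S\eta^{n-1})$ is the unique element of $Z$ produced by the step; induction from $\eta^0\in Z$ finishes this case. The Neumann--Neumann iteration \cref{eq:nn} is handled in the same way: for fixed $\eta^{n-1}\in Z$ the equations $S_i\lambda_i^n=\chi-S\eta^{n-1}$ have the unique solutions $\lambda_i^n=S_i^{-1}(\chi-S\eta^{n-1})\in Z$ by \cref{cor:spiso}, and then $\eta^n=\eta^{n-1}+s_2\lambda_1^n+s_3\lambda_2^n\in Z$ is uniquely determined.

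The only step requiring a short argument is the Robin--Robin iteration \cref{eq:pr}, where one must solve $(s_0\mathcal{R}+S_i)\eta_i^n=g$ for a prescribed $g\in Z^*$. Here I would apply the Lax--Milgram theorem to the bilinear form $(\eta,\mu)\mapsto s_0\langle\mathcal{R}\eta,\mu\rangle+\langle S_i\eta,\mu\rangle$ on $Z\times Z$: boundedness follows from the boundedness of $\mathcal{R}$ and of $S_i$ (the latter from \cref{lemma:sp}), and coercivity follows from
\[
s_0\langle\mathcal{R}\eta,\eta\rangle+\langle S_i\eta,\eta\rangle\;\geq\;\langle S_i\eta,\eta\rangle\;\geq\; c\|\eta\|_Z^2 ,
\]
using the positivity of $\mathcal{R}$ together with \cref{eq:weakcoer}. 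Hence $s_0\mathcal{R}+S_i\colon Z\to Z^*$ is an isomorphism for $i=1,2$. Consequently, given $\eta_2^{n-1}\in Z$, the right-hand side $(s_0\mathcal{R}-S_2)\eta_2^{n-1}+\chi$ lies in $Z^*$ and determines a unique $\eta_1^n\in Z$; then $(s_0\mathcal{R}-S_1)\eta_1^n+\chi\in Z^*$ determines a unique $\eta_2^n\in Z$. Starting from $\eta_2^0\in Z$, the iteration \cref{eq:pr} is thus well defined.

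I do not expect a genuine obstacle: the entire content sits in \cref{lemma:sp} and \cref{cor:spiso}, and the one new ingredient — coercivity of $s_0\mathcal{R}+S_i$ — is immediate once one observes that $\mathcal{R}$ contributes only a nonnegative term. The mildest technical point to get right is that $\mathcal{R}$, a priori defined on $L^2_{L^2(\Gamma)}(\R_+)$, must be interpreted as a map $Z\to Z^*$ via the embedding $Z\hookrightarrow L^2_{L^2(\Gamma)}(\R_+)$, which is precisely what makes the sum $s_0\mathcal{R}+S_i$ meaningful as an operator on the interface space.
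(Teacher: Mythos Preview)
Your proposal is correct and mirrors the paper's own proof: the Dirichlet--Neumann and Neumann--Neumann cases are dispatched directly via \cref{cor:spiso}, and for the Robin--Robin case the paper likewise observes that $\langle\mathcal{R}\mu,\mu\rangle=\|\mu\|_{L^2_{L^2(\Gamma)}(\R_+)}^2\geq 0$, so that $s_0\mathcal{R}+S_i$ inherits coercivity from $S_i$ and is an isomorphism by Lax--Milgram. Your write-up is simply more explicit about the induction and about reading $\mathcal{R}$ as a map $Z\to Z^*$ through the embedding, but the substance is identical.
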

\begin{proof}
    For the Dirichlet--Neumann and Neumann--Neumann methods the results follow immediately from~\cref{cor:spiso} since this implies that the interface iterations have unique iterates. For the Robin--Robin method the result also follows since
    \begin{equation*}
        \langle \mathcal{R}\mu, \mu\rangle\geq\|\mu\|_{L^2_{L^2(\Gamma)}(\R_+)}^2\geq 0 \quad\text{for all }\mu\in Z,
    \end{equation*}
    which means that $s_0\mathcal{R}+S_i$ is coercive and therefore an isomorphism for all $s_0>0$.
\end{proof}

In order to illustrate the applicability of the derived framework, we will next prove convergence of the Robin--Robin method. We begin by introducing a particular Gelfand triple.
\begin{lemma}\label{lem:Zdense}
    If~\cref{ass:phi} holds, then $Z$ is densely embedded into $L^2_{L^2(\Gamma)}(\R_+)$.
\end{lemma}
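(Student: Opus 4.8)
The plan is to transfer the statement to the reference cylinder $\R_+\times\Gamma(0)$ via the isomorphism $\psi$ and then invoke a tensor-product density argument of the type already used in \cref{lem:density1,lem:density2}. Recall from \cref{tab:spaces} that $Z=H^{1/4}_{L^2(\Gamma)}(\R_+)\cap L^2_\Lambda(\R_+)$, so that $v\in Z$ precisely when $\psi_-v\in H^{1/4}\bigl(\R_+;L^2(\Gamma(0))\bigr)\cap L^2\bigl(\R_+;\Lambda(0)\bigr)$, and that $\psi$ restricts to an isometric isomorphism between this reference intersection space and $Z$ (see \cref{sec:abs}).

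For the continuity of the embedding, I would use that $H^{1/4}(\R_+)\hookrightarrow L^2(\R_+)$ and $\Lambda(0)\hookrightarrow L^2(\Gamma(0))$, so the reference intersection space embeds continuously into $L^2\bigl(\R_+;L^2(\Gamma(0))\bigr)$; transferring this back with $\psi$ and using the norm equivalence $\|\cdot\|_{L^2_{L^2(\Gamma)}(\R_+)}\simeq\|\psi_-\cdot\|_{L^2(\R_+;L^2(\Gamma(0)))}$, which holds because $\bigl(L^2(\Gamma),\psi\bigr)$ is compatible by \cref{lem:compatL2S}, yields $\|v\|_{L^2_{L^2(\Gamma)}(\R_+)}\leq C\|v\|_Z$. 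Injectivity is immediate since $Z\subseteq L^2_{L^2(\Gamma)}(\R_+)$ already as a set.

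For the density, note that \cref{lem:compatL2S} also makes $\psi\colon L^2\bigl(\R_+;L^2(\Gamma(0))\bigr)\to L^2_{L^2(\Gamma)}(\R_+)$ an isomorphism with equivalent norms, so it maps dense subsets to dense subsets; hence it suffices to show that
\begin{equation*}
\psi_-(Z)=H^{1/4}\bigl(\R_+;L^2(\Gamma(0))\bigr)\cap L^2\bigl(\R_+;\Lambda(0)\bigr)
\end{equation*}
is dense in $L^2\bigl(\R_+;L^2(\Gamma(0))\bigr)$. Choosing a countable dense subset $D\subset\Lambda(0)$ (which exists since $\Lambda(0)$ is separable), every finite sum of elementary tensors $\varphi\otimes d$ with $\varphi\in C^\infty_0(\R_+)$ and $d\in D$ belongs to $\psi_-(Z)$, because $C^\infty_0(\R_+)\subset H^{1/4}(\R_+)$ and $D\subset\Lambda(0)\subset L^2(\Gamma(0))$. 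On the other hand $C^\infty_0(\R_+)$ is dense in $L^2(\R_+)$ and $D$ is dense in $L^2(\Gamma(0))$, the latter because $\Lambda(0)\cong[H^{1/2}_0(\Gamma(0)),L^2(\Gamma(0))]_{1/2}$ contains $H^{1/2}_0(\Gamma(0))$, which is dense in $L^2(\Gamma(0))$. Therefore the algebraic tensor space $C^\infty_0(\R_+)\otimes D$ is dense in $L^2(\R_+)\otimes L^2(\Gamma(0))$, and hence in $L^2\bigl(\R_+;L^2(\Gamma(0))\bigr)$; being contained in $\psi_-(Z)$, the latter is dense in $L^2\bigl(\R_+;L^2(\Gamma(0))\bigr)$, and applying $\psi$ finishes the proof.

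I do not expect a genuine obstacle in this lemma: the argument is entirely structural. The only external inputs are the density of $\Lambda(0)$ in $L^2(\Gamma(0))$, which follows from the interpolation identification recorded just after the definition of $\Lambda(t)$, and the elementary fact (already used in \cref{lem:density1}) that the algebraic tensor product of two dense subsets is dense in the corresponding Bochner space.
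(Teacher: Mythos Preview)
Your proof is correct and follows essentially the same route as the paper: both transfer the question to the reference cylinder via $\psi$, invoke the density of $\Lambda(0)$ in $L^2\bigl(\Gamma(0)\bigr)$, and then use a tensor-product density argument to conclude. The only cosmetic differences are that the paper works directly with $H^{1/4}(\R_+)\otimes\Lambda(0)$ (citing an external lemma for the density of $\Lambda(0)$ in $L^2(\Gamma(0))$) whereas you pass through the smaller set $C^\infty_0(\R_+)\otimes D$ and justify the density of $\Lambda(0)$ via the interpolation identification; neither change affects the substance.
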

\begin{proof}
     Note first of all that $Z \hookrightarrow L^2_{L^2(\Gamma)}(\R_+)$ is clearly a continuous embedding. For density, let $\eta \in L^2_{L^2(\Gamma)}(\R_+)$ and define 
    \begin{equation*}
        \mu= \psi_-\eta\in L^2\bigl(\R_+; L^2(\Gamma(0))\bigr).
    \end{equation*}
    Now recall that $H^{1/4}(\R_+)$ is dense in $L^2(\R_+)$ and $\Lambda(0)$ is dense in $L^2(\Gamma(0))$, see~\cite[Lemma 4.2]{EHEE22}. Therefore the algebraic tensor product space $H^{1/4}(\R_+)\otimes \Lambda(0)$ is dense in $L^2\bigl(\R_+; L^2(\Gamma(0))\bigr)$, see~\cite[Theorem 3.12]{weidmann}. It then follows that the larger space
    \begin{equation*}
        H^{1/ 4}\bigl(\R_+; L^2(\Gamma(0))\bigr)\cap L^2\bigl(\R_+; \Lambda(0)\bigr)
    \end{equation*}
    is also dense in $L^2\bigl(\R_+; L^2(\Gamma(0))\bigr)$. Hence, there exists a sequence
    \begin{equation*}
        \mu_n \in H^{1/ 4}\bigl(\R_+; L^2(\Gamma(0))\bigr)\cap L^2\bigl(\R_+; \Lambda(0)\bigr)
    \end{equation*}
    that satisfies $\mu_n \to \mu$ in $L^2\bigl(\R_+; L^2(\Gamma(0))\bigr)$. It immediately follows from the fact that $\psi$ is an isomorphism that the sequence $\eta_n=\psi \mu_n$ satisfies $\eta_n\in Z$ and $\eta_n\to \eta$ in $L^2_{L^2(\Gamma)}(\R_+)$. Since $\eta$ was arbitrary this shows that $Z$ is dense in $L^2_{L^2(\Gamma)}(\R_+)$.
\end{proof}
By~\cref{lem:Zdense} together with the fact that $Z$ and $L^2_{L^2(\Gamma)}(\R_+)$ are Hilbert spaces, we have the Gelfand triple
    \begin{equation*}
        Z\hookrightarrow L^2_{L^2(\Gamma)}(\R_+)\hookrightarrow Z^*,
    \end{equation*}
with dense embeddings. We recall the Riesz isomorphism $\mathcal{R}\colon L^2_{L^2(\Gamma)}(\R_+)\rightarrow L^2_{L^2(\Gamma)}(\R_+)^*$:
\begin{equation}\label{eq:riesz}
\langle \mathcal{R}\eta, \mu\rangle=(\eta, \mu)_{L^2_{L^2(\Gamma)}(\R_+)}\quad\text{ for all } \eta\in L^2_{L^2(\Gamma)}(\R_+),\,\mu\in Z.
\end{equation}
We trivially have the bounds
\begin{equation}\label{eq:rieszbound}
\abs{\langle \mathcal{R}\eta, \mu\rangle}\leq\|\eta\|_{L^2_{L^2(\Gamma)}(\R_+)}\|\mu\|_{L^2_{L^2(\Gamma)}(\R_+)}\quad\text{ for all } \eta\in L^2_{L^2(\Gamma)}(\R_+),\,\mu\in Z
\end{equation}
and
\begin{equation}\label{eq:rieszcoer}
\langle \mathcal{R}\eta, \eta\rangle\geq\|\eta\|_{L^2_{L^2(\Gamma)}(\R_+)}^2\quad\text{ for all } \eta\in Z.
\end{equation}
The variational framework appears to be too general for analyzing the convergence of the Peaceman--Rachford iteration. Therefore, we introduce the Steklov--Poincaré operators as affine unbounded operators on $L^2_{L^2(\Gamma)}(\R_+)$ before we prove that the iteration converges. To this end, let 
\begin{align*}
        D(\mathcal{S}_i)&=\{\eta\in Z: S_i\eta-\chi_i\in L^2_{L^2(\Gamma)}(\R_+)^*\},\\
        D(\mathcal{S})&=\{\eta\in Z: S\eta-\chi\in L^2_{L^2(\Gamma)}(\R_+)^*\},
\end{align*}
and define the unbounded affine operators
\begin{align*}
	\mathcal{S}_i&:D(\mathcal{S}_i)\subseteq L^2_{L^2(\Gamma)}(\R_+)\rightarrow L^2_{L^2(\Gamma)}(\R_+): \eta\mapsto \mathcal{R}^{-1}(S_i\eta-\chi_i),\\
	\mathcal{S}&:D(\mathcal{S})\subseteq L^2_{L^2(\Gamma)}(\R_+)\rightarrow L^2_{L^2(\Gamma)}(\R_+): \eta\mapsto \mathcal{R}^{-1}(S\eta-\chi).
\end{align*}
In $L^2_{L^2(\Gamma)}(\R_+)$ the Steklov--Poincaré equation is to find $\eta\in D(\mathcal{S})$ such that
\begin{equation}\label{eq:spl2}
	  \mathcal{S}\eta=0
\end{equation}
and the Peaceman--Rachford iteration takes the following form: For each $n=1, 2,\dots$, find $(\eta_1^n, \eta_2^n)\in D(\mathcal{S}_1)\times D(\mathcal{S}_2)$ such that
\begin{equation}\label{eq:prl2}
  \left\{\begin{aligned} (s_0I+\mathcal{S}_1)\eta_1^n&=(s_0I-\mathcal{S}_2)\eta_2^{n-1},\\
        (s_0I+\mathcal{S}_2)\eta_2^n&=(s_0I-\mathcal{S}_1)\eta_1^n.
\end{aligned}\right.
\end{equation}
Here, $\eta_2^0\in D(\mathcal{S}_2)$ is a given initial guess. We now verify that~\cref{eq:spl2} is indeed a restriction of the weak Steklov--Poincaré equation~\cref{eq:speq}.
\begin{lemma}\label{lemma:spvl2}
Suppose that~\cref{ass:phi,ass:eqdata,ass:tech} hold. If $\eta\in D(\mathcal{S})$ solves the $L^2$-Steklov--Poincaré equation~\cref{eq:spl2}, then $\eta$ also solves the weak Steklov--Poincaré equation~\cref{eq:speq}.
\end{lemma}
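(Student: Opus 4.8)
The plan is to unwind the definitions and exploit the Gelfand triple $Z \hookrightarrow L^2_{L^2(\Gamma)}(\R_+) \hookrightarrow Z^*$ furnished by \cref{lem:Zdense}; the content of the statement is essentially that the $L^2$-realization $\mathcal{S}$ is merely a restriction of the weak operator $S$. So suppose $\eta \in D(\mathcal{S})$ solves \cref{eq:spl2}, i.e.\ $\mathcal{S}\eta = \mathcal{R}^{-1}(S\eta - \chi) = 0$. By the very definition of $D(\mathcal{S})$, the functional $S\eta - \chi \in Z^*$ agrees on $Z$ with a (necessarily unique, by density of $Z$) element of $L^2_{L^2(\Gamma)}(\R_+)^*$, which we again denote $S\eta - \chi$; it is this element to which $\mathcal{R}^{-1}$ is applied in the definition of $\mathcal{S}\eta$.

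Next I would test against an arbitrary $\mu \in Z$. Since $Z \hookrightarrow L^2_{L^2(\Gamma)}(\R_+)$, the pairing $\langle S\eta - \chi, \mu\rangle$ is the same whether computed in $Z^* \times Z$ or in $L^2_{L^2(\Gamma)}(\R_+)^* \times L^2_{L^2(\Gamma)}(\R_+)$, and by the definition of $\mathcal{S}$ together with the Riesz identity \cref{eq:riesz} it equals $\langle \mathcal{R}\mathcal{S}\eta, \mu\rangle = (\mathcal{S}\eta, \mu)_{L^2_{L^2(\Gamma)}(\R_+)}$. As $\mathcal{S}\eta = 0$, this vanishes; since $\mu \in Z$ was arbitrary, $S\eta - \chi = 0$ in $Z^*$, i.e.\ $S\eta = \chi$, which is precisely \cref{eq:speq}. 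Equivalently, and without test functions: $\mathcal{R}$ is an isomorphism, so $\mathcal{S}\eta = 0$ already forces $S\eta - \chi = 0$ in $L^2_{L^2(\Gamma)}(\R_+)^*$, and the injectivity of the embedding $L^2_{L^2(\Gamma)}(\R_+)^* \hookrightarrow Z^*$ — guaranteed by \cref{lem:Zdense} — transfers this to $Z^*$.

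I do not expect any genuine obstacle here. The only point requiring mild care is the bookkeeping of the various duality pairings and the identification of $S\eta - \chi$ as an element of $L^2_{L^2(\Gamma)}(\R_+)^*$, all of which is handled by the density of $Z$ in $L^2_{L^2(\Gamma)}(\R_+)$ established in \cref{lem:Zdense}. In particular no new estimates are needed; boundedness and coercivity of the Steklov--Poincar\'e operators (\cref{lemma:sp}) are not even invoked.
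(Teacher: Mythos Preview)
Your proposal is correct and follows essentially the same route as the paper: both arguments note that $\mathcal{S}\eta=0$ means $\mathcal{R}^{-1}(S\eta-\chi)=0$, and then use the Riesz identity \cref{eq:riesz} to conclude that $\langle S\eta-\chi,\mu\rangle=(\mathcal{S}\eta,\mu)_{L^2_{L^2(\Gamma)}(\R_+)}=0$ for every $\mu\in Z$. Your additional remarks on the Gelfand-triple bookkeeping and the alternative phrasing via injectivity of $L^2_{L^2(\Gamma)}(\R_+)^*\hookrightarrow Z^*$ are accurate and, if anything, make the identification of pairings more explicit than in the paper.
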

\begin{proof}
From~\cref{eq:spl2} we have
\begin{displaymath}
(\mathcal{R}^{-1}(S\eta-\chi), \mu)_{L^2_{L^2(\Gamma)}(\R_+)}=0\quad\text{ for all }\mu\in L^2_{L^2(\Gamma)}(\R_+).
\end{displaymath}
Therefore, we get by~\cref{eq:riesz} that
\begin{displaymath}
\langle S\eta-\chi, \mu \rangle=(\mathcal{R}^{-1}(S\eta-\chi), \mu)_{L^2_{L^2(\Gamma)}(\R_+)}=0
\end{displaymath}
for all $\mu\in Z$.
\end{proof}
A similar result holds for the Peaceman--Rachford iteration. The proof is left out since it is similar to the proof of~\cref{lemma:spvl2}. 
\begin{lemma}\label{lemma:prl2weak}
Suppose that~\cref{ass:phi,ass:eqdata,ass:tech} hold. If $(\eta_1^n, \eta_2^n)_{n\geq 1}$ solves the $L^2$-Peaceman--Rachford iteration~\cref{eq:prl2} with $\eta_2^0\in D(\mathcal{S}_2)$, then $(\eta_1^n, \eta_2^n)_{n\geq 1}$ also solves the weak Peaceman--Rachford iteration~\cref{eq:pr} with the same initial guess.
\end{lemma}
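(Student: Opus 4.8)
The plan is to replay the argument of~\cref{lemma:spvl2}, now applied separately to each of the two update equations in~\cref{eq:prl2}. Fix $n\geq 1$. Since $(\eta_1^n,\eta_2^n)$ solves~\cref{eq:prl2}, by definition $\eta_i^n\in D(\mathcal{S}_i)$, so $\eta_i^n\in Z$ and $S_i\eta_i^n-\chi_i\in L^2_{L^2(\Gamma)}(\R_+)^*$; in particular $\mathcal{S}_i\eta_i^n=\mathcal{R}^{-1}(S_i\eta_i^n-\chi_i)$ is a genuine element of $L^2_{L^2(\Gamma)}(\R_+)$. I would first write the first line of~\cref{eq:prl2}, as an identity in $L^2_{L^2(\Gamma)}(\R_+)$, in the form
\begin{equation*}
s_0\eta_1^n+\mathcal{R}^{-1}(S_1\eta_1^n-\chi_1)=s_0\eta_2^{n-1}-\mathcal{R}^{-1}(S_2\eta_2^{n-1}-\chi_2).
\end{equation*}

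Next I would test this identity against an arbitrary $\mu\in Z\hookrightarrow L^2_{L^2(\Gamma)}(\R_+)$ in the inner product of $L^2_{L^2(\Gamma)}(\R_+)$ and convert every term into a duality pairing with $\mu$. Using~\cref{eq:riesz} in the two equivalent forms $(\eta,\mu)_{L^2_{L^2(\Gamma)}(\R_+)}=\langle\mathcal{R}\eta,\mu\rangle$ and $(\mathcal{R}^{-1}g,\mu)_{L^2_{L^2(\Gamma)}(\R_+)}=\langle g,\mu\rangle$ for $g\in L^2_{L^2(\Gamma)}(\R_+)^*$, the identity turns into
\begin{equation*}
\langle(s_0\mathcal{R}+S_1)\eta_1^n,\mu\rangle-\langle\chi_1,\mu\rangle=\langle(s_0\mathcal{R}-S_2)\eta_2^{n-1},\mu\rangle+\langle\chi_2,\mu\rangle\quad\text{for all }\mu\in Z.
\end{equation*}
Since $\chi=\chi_1+\chi_2$, rearranging produces precisely the first equation of~\cref{eq:pr}. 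The second line of~\cref{eq:prl2} is handled in exactly the same way and yields the second equation of~\cref{eq:pr}, while the initial guess $\eta_2^0$ is unchanged by construction; this gives the claim.

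The only delicate point — and it is the very subtlety already present in~\cref{lemma:spvl2} — is the passage from an identity of functionals on $L^2_{L^2(\Gamma)}(\R_+)$ to one of functionals on $Z$. Here one invokes that $Z\hookrightarrow L^2_{L^2(\Gamma)}(\R_+)\hookrightarrow Z^*$ is a Gelfand triple with dense embeddings (by~\cref{lem:Zdense}), so that the relevant pairings are compatible and any element of $L^2_{L^2(\Gamma)}(\R_+)^*$ may be regarded, by restriction, as an element of $Z^*$. With this identification the computation above is pure bookkeeping of dualities and no genuine obstacle arises.
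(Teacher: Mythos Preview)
Your argument is correct and is precisely the approach the paper intends: the paper omits the proof, stating only that it is similar to that of \cref{lemma:spvl2}, and you have carried out exactly that translation of the identity in $L^2_{L^2(\Gamma)}(\R_+)$ into the $Z^*$--$Z$ duality via~\cref{eq:riesz} for each of the two update equations.
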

\begin{lemma}
Suppose that~\cref{ass:phi,ass:eqdata,ass:tech} hold. Then $\mathcal{S}_i,\, i=1, 2$, satisfy the monotonicity property
\begin{equation}\label{eq:coerl2} 
(\mathcal{S}_i\eta-\mathcal{S}_i\mu, \eta-\mu)_{L^2_{L^2(\Gamma)}(\R_+)}\geq 
	c\|\eta-\mu\|^2_Z\quad \text{ for all }\eta, \mu\in D(\mathcal{S}_i).
\end{equation}
Moreover, for any $s_0\geq0$ the operators $s_0I+\mathcal{S}_i\colon D(\mathcal{S}_i)\rightarrow L^2_{L^2(\Gamma)}(\R_+),\, i=1, 2$, are isomorphisms. Similar results hold for $\mathcal{S}$. In particular, there exists a unique solution to \cref{eq:spl2} and the iteration \cref{eq:prl2} is well defined.
\end{lemma}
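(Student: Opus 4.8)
The plan is to reduce everything to the boundedness and coercivity of the weak Steklov--Poincar\'e operators established in \cref{lemma:sp}, used in tandem with the Gelfand triple $Z\hookrightarrow L^2_{L^2(\Gamma)}(\R_+)\hookrightarrow Z^*$ (dense embeddings by \cref{lem:Zdense}) and the defining relation \cref{eq:riesz} of the Riesz isomorphism $\mathcal{R}$. For the monotonicity \cref{eq:coerl2}, note that for $\eta,\mu\in D(\mathcal{S}_i)$ the difference $(S_i\eta-\chi_i)-(S_i\mu-\chi_i)=S_i(\eta-\mu)$ belongs to $L^2_{L^2(\Gamma)}(\R_+)^*$, so $\mathcal{S}_i\eta-\mathcal{S}_i\mu=\mathcal{R}^{-1}S_i(\eta-\mu)$ is a well-defined element of $L^2_{L^2(\Gamma)}(\R_+)$. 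Applying \cref{eq:riesz} with $w=\mathcal{R}^{-1}S_i(\eta-\mu)$ (so that $\mathcal{R}w=S_i(\eta-\mu)$) and test element $\eta-\mu\in Z$,
\[
(\mathcal{S}_i\eta-\mathcal{S}_i\mu,\eta-\mu)_{L^2_{L^2(\Gamma)}(\R_+)}
=(\mathcal{R}^{-1}S_i(\eta-\mu),\eta-\mu)_{L^2_{L^2(\Gamma)}(\R_+)}
=\langle S_i(\eta-\mu),\eta-\mu\rangle\ge c\|\eta-\mu\|_Z^2,
\]
the last step being \cref{eq:weakcoer}. Replacing $S_i$ by $S=S_1+S_2$ and using the coercivity of $S$ from \cref{lemma:sp} gives the same bound for $\mathcal{S}$.

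For the isomorphism property, fix $s_0\ge 0$ and $g\in L^2_{L^2(\Gamma)}(\R_+)$; I would show that, for $\eta\in Z$, the problem $(s_0I+\mathcal{S}_i)\eta=g$ with $\eta\in D(\mathcal{S}_i)$ is equivalent to the linear equation $(s_0\mathcal{R}+S_i)\eta=\chi_i+\mathcal{R}g$ in $Z^*$. Indeed, applying $\mathcal{R}$ to the former yields the latter; conversely, a solution $\eta\in Z$ of the latter satisfies $S_i\eta-\chi_i=\mathcal{R}(g-s_0\eta)$ with $g-s_0\eta\in L^2_{L^2(\Gamma)}(\R_+)$, hence $\eta\in D(\mathcal{S}_i)$ and $(s_0I+\mathcal{S}_i)\eta=g$. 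Now $s_0\mathcal{R}+S_i\colon Z\to Z^*$ is bounded by \cref{eq:rieszbound}, the continuous embedding $Z\hookrightarrow L^2_{L^2(\Gamma)}(\R_+)$, and the boundedness of $S_i$; and it is coercive on $Z$, since by \cref{eq:rieszcoer} and \cref{eq:weakcoer}
\[
\langle (s_0\mathcal{R}+S_i)\eta,\eta\rangle\ge s_0\|\eta\|^2_{L^2_{L^2(\Gamma)}(\R_+)}+c\|\eta\|_Z^2\ge c\|\eta\|_Z^2 .
\]
Since $\chi_i+\mathcal{R}g\in Z^*$, the Lax--Milgram theorem supplies a unique $\eta\in Z$, which by the equivalence is the unique $\eta\in D(\mathcal{S}_i)$ with $(s_0I+\mathcal{S}_i)\eta=g$. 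Thus $s_0I+\mathcal{S}_i$ is a bijection onto $L^2_{L^2(\Gamma)}(\R_+)$, and the same computation with $S$, $\chi$ in place of $S_i$, $\chi_i$ handles $\mathcal{S}$.

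Finally, taking $s_0=0$ for $\mathcal{S}$ shows $\mathcal{S}\colon D(\mathcal{S})\to L^2_{L^2(\Gamma)}(\R_+)$ is bijective, so \cref{eq:spl2} has a unique solution; and in \cref{eq:prl2}, if $\eta_2^{n-1}\in D(\mathcal{S}_2)$ then $(s_0I-\mathcal{S}_2)\eta_2^{n-1}\in L^2_{L^2(\Gamma)}(\R_+)$, so the isomorphism $s_0I+\mathcal{S}_1$ produces a unique $\eta_1^n\in D(\mathcal{S}_1)$ and then $s_0I+\mathcal{S}_2$ a unique $\eta_2^n\in D(\mathcal{S}_2)$, so starting from $\eta_2^0\in D(\mathcal{S}_2)$ the iteration is well defined by induction. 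I do not expect a genuine obstacle beyond \cref{lemma:sp}; the one point requiring care is keeping the duality pairings across $Z\hookrightarrow L^2_{L^2(\Gamma)}(\R_+)\hookrightarrow Z^*$ consistent, in particular applying $\mathcal{R}^{-1}$ only to functionals that actually lie in $L^2_{L^2(\Gamma)}(\R_+)^*$ --- which is exactly what membership in $D(\mathcal{S}_i)$ encodes.
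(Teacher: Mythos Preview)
Your proposal is correct and follows essentially the same route as the paper: the monotonicity computation is identical, and for the isomorphism you reduce $(s_0I+\mathcal{S}_i)\eta=g$ to $(s_0\mathcal{R}+S_i)\eta=\chi_i+\mathcal{R}g$ in $Z^*$ and solve the latter. The only cosmetic difference is that the paper cites \cref{cor:spiso} at this point (implicitly extending it to $s_0\mathcal{R}+S_i$), whereas you spell out the Lax--Milgram verification for $s_0\mathcal{R}+S_i$ directly; your version is slightly more explicit but not a different argument.
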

\begin{proof}
The monotonicity follows from~\cref{eq:riesz,eq:weakcoer}, since
\begin{align*}
      (\mathcal{S}_i\eta-\mathcal{S}_i\mu, \eta-\mu)_{L^2_{L^2(\Gamma)}(\R_+)}&=\langle (S_i\eta-\chi_i)-(S_i\mu-\chi_i), \eta-\mu\rangle\\
             &=\langle S_i(\eta-\mu), \eta-\mu\rangle\geq c\|\eta-\mu\|^2_Z
\end{align*}
for all $\eta, \mu\in D(\mathcal{S}_i)$. Let $\mu\in L^2_{L^2(\Gamma)}(\R_+)$ be arbitrary. Then $\chi_i+\mathcal{R}\mu\in Z^*$, and by~\cref{cor:spiso}, there exists a unique $\eta\in Z$ such that $(s_0\mathcal{R}+S_i)\eta=\chi_i+\mathcal{R}\mu$ in $Z^*$. Rearranging yields that $S_i\eta-\chi_i=\mathcal{R}(\mu-s_0\eta)\in L^2_{L^2(\Gamma)}(\R_+)^*$, i.e.,  $\eta\in D(\mathcal{S}_i)$ with
\begin{displaymath}
            (s_0I+\mathcal{S}_i)\eta=s_0\eta+\mathcal{R}^{-1}(S_i\eta-\chi_i)=\mu.
\end{displaymath}
Thus, we have shown that $(s_0I+\mathcal{S}_i)$ is an isomorphism. The proof for $\mathcal{S}$ is similar and is therefore left out.
\end{proof}

In order to proceed with the convergence analysis, we require the following regularity of the solution to the weak parabolic equation~\cref{eq:weakHonehalf}. 
\begin{assumption}\label{ass:regularity}
The functionals  
\begin{displaymath}
\mu\mapsto a_i(q_iu, F_i\mu)-\langle f_i, F_i\mu\rangle,\quad{i=1,2},
\end{displaymath}
are elements in $L^2_{L^2(\Gamma)}(\R_+)^*$, where $u\in W$ is the solution to~\cref{eq:weakHonehalf}.
\end{assumption}
%\begin{remark}
This assumption is somewhat implicit, but can be interpreted as the solution $u$ having a normal derivative on $\Gamma$ in $L^2_{L^2(\Gamma)}(\R_+)$.
%\end{remark}
\begin{lemma}\label{lemma:friedrich}
Suppose that~\cref{ass:phi,ass:eqdata,ass:tech,ass:regularity} hold. If $\eta$ solves the $L^2$-Steklov--Poincaré equation~\cref{eq:spl2} then $\eta\in D(\mathcal{S}_1)\cap D(\mathcal{S}_2)$.
\end{lemma}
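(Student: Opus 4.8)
The plan is to recognize that $\eta$, which by construction already lies in $D(\mathcal{S})\subseteq Z$, is nothing but the interface trace of the (unique) solution to the weak problem~\cref{eq:weakHonehalf}, and then to read off membership in $D(\mathcal{S}_i)$ directly from~\cref{ass:regularity}.

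First I would invoke~\cref{lemma:spvl2}: since $\eta$ solves~\cref{eq:spl2} it also solves the weak Steklov--Poincar\'e equation~\cref{eq:speq}, i.e.\ $S\eta=\chi$ in $Z^*$, and by~\cref{cor:spiso} this $\eta$ is the unique such element. Then~\cref{lemma:tpspeq} shows that $(u_1,u_2):=(F_1\eta+G_1f_1,\,F_2\eta+G_2f_2)$ solves the transmission problem~\cref{eq:weaktran}, and the forward direction of~\cref{lemma:tranequiv} shows that the glued function $u=\phi v$, with $v=\{\phi_-u_1\text{ on }\R_+\times\Omega_1(0),\ \phi_-u_2\text{ on }\R_+\times\Omega_2(0)\}$, solves~\cref{eq:weakHonehalf}. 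Since that problem has a unique solution, $u$ is exactly the function referred to in~\cref{ass:regularity}; moreover, unwinding the definition of the gluing map gives $\phi_-u=v$, hence $q_iu=\phi\,q_{i,0}\,\phi_-u=\phi(\phi_-u_i)=u_i=F_i\eta+G_if_i$ for $i=1,2$.

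Next I would compute, for an arbitrary $\mu\in Z$, using the definitions of $S_i$ and $\chi_i$ and the bilinearity of $a_i$ in its first argument (which is defined on $W_i\times\tilde{W}_i$ by~\cref{cor:aextend}, and $F_i\mu\in W_i\hookrightarrow\tilde{W}_i$):
\[
\langle S_i\eta-\chi_i,\mu\rangle
= a_i(F_i\eta,F_i\mu)-\langle f_i,F_i\mu\rangle+a_i(G_if_i,F_i\mu)
= a_i(q_iu,F_i\mu)-\langle f_i,F_i\mu\rangle .
\]
By~\cref{ass:regularity} the map $\mu\mapsto a_i(q_iu,F_i\mu)-\langle f_i,F_i\mu\rangle$ is an element of $L^2_{L^2(\Gamma)}(\R_+)^*$, hence so is $\mu\mapsto\langle S_i\eta-\chi_i,\mu\rangle$; that is, $S_i\eta-\chi_i\in L^2_{L^2(\Gamma)}(\R_+)^*$, which is precisely the statement $\eta\in D(\mathcal{S}_i)$. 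Carrying this out for both $i=1$ and $i=2$ yields $\eta\in D(\mathcal{S}_1)\cap D(\mathcal{S}_2)$.

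I do not expect a serious obstacle here; the only care needed is the bookkeeping through the chain of equivalences~\cref{lemma:spvl2,lemma:tpspeq,lemma:tranequiv}, in particular checking that the function obtained by gluing the Steklov--Poincar\'e solution really is the unique weak solution $u$ of~\cref{eq:weakHonehalf} appearing in~\cref{ass:regularity}, and that $q_iu=F_i\eta+G_if_i$. Once that identification is in place, the claim is a one-line consequence of the assumed regularity.
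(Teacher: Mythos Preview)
Your proposal is correct and follows essentially the same approach as the paper: identify $q_iu=F_i\eta+G_if_i$ via the chain of equivalences (\cref{lemma:spvl2,lemma:tpspeq,lemma:tranequiv}), then read off $S_i\eta-\chi_i\in L^2_{L^2(\Gamma)}(\R_+)^*$ directly from~\cref{ass:regularity}. The only quibble is terminological: the implication you use from~\cref{lemma:tranequiv} is the \emph{converse} direction (transmission problem $\Rightarrow$ weak problem), not the forward one, but the logic is exactly right.
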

\begin{proof}
Let $\eta\in D(\mathcal{S})$ be the solution to~\cref{eq:spl2}. By~\cref{lemma:tpspeq,lemma:tranequiv}, we have the identification $q_iu=F_i\eta+G_if_i$, where $u\in W$ is the solution to the weak parabolic equation~\cref{eq:weakHonehalf}. \cref{ass:regularity} then yields that 
 \begin{align*}
            \mu\mapsto\langle S_i\eta-\chi_i, \mu\rangle&=a_i(F_i\eta, F_i\mu)+a_i(G_if_i, F_i\mu)-\langle f_i, F_i\mu\rangle\\
            &=a_i(q_iu, F_i\mu)-\langle f_i, F_i\mu\rangle
\end{align*}
is an element in $L^2_{L^2(\Gamma)}(\R_+)^*$, i.e., $\eta\in D(\mathcal{S}_i)$ for $i=1,2$.
\end{proof}
 
\begin{lemma}\label{lemma:prlimit}
Suppose that~\cref{ass:phi,ass:eqdata,ass:tech,ass:regularity} hold. Let $\eta$ be the solution to the $L^2$-Steklov--Poincaré equation~\cref{eq:spl2} and $(\eta_1^n, \eta_2^n)_{n\geq 1}$ be the $L^2$-Peaceman--Rachford iterates~\cref{eq:prl2} with $\eta_2^0\in D(\mathcal{S}_2)$. Then we have the limit
\begin{equation}\label{eq:prlimit}
 (\mathcal{S}_i\eta^n_i-\mathcal{S}_i \eta, \eta^n_i -\eta)_{L^2_{L^2(\Gamma)}(\R_+)}\rightarrow 0
\end{equation}
 as $n$ tends to infinity.
\end{lemma}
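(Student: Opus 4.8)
The plan is to run the classical energy (telescoping) argument underlying the Lions--Mercier convergence theory~\cite{lionsmercier}, here adapted to the affine operators $\mathcal{S}_i$ on $L^2_{L^2(\Gamma)}(\R_+)$; the scheme parallels the one used for elliptic problems in~\cite{EHEE22}. Write $\hat S_i=\mathcal{R}^{-1}S_i$ for the bounded linear principal part of $\mathcal{S}_i$, so that $\mathcal{S}_i\mu-\mathcal{S}_i\nu=\hat S_i(\mu-\nu)$. By~\cref{lemma:friedrich} the solution $\eta$ of~\cref{eq:spl2} lies in $D(\mathcal{S}_1)\cap D(\mathcal{S}_2)$, and since then $\mathcal{S}\eta=\mathcal{S}_1\eta+\mathcal{S}_2\eta=0$ as an identity in $L^2_{L^2(\Gamma)}(\R_+)$, it satisfies the consistency relations $(s_0I+\mathcal{S}_1)\eta=(s_0I-\mathcal{S}_2)\eta$ and $(s_0I+\mathcal{S}_2)\eta=(s_0I-\mathcal{S}_1)\eta$. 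Subtracting these from the two lines of~\cref{eq:prl2} and setting $e_i^n=\eta_i^n-\eta$ yields the homogeneous error recursion
\begin{equation*}
(s_0I+\hat S_1)e_1^n=(s_0I-\hat S_2)e_2^{n-1},\qquad (s_0I+\hat S_2)e_2^n=(s_0I-\hat S_1)e_1^n,
\end{equation*}
in which every term is a well-defined element of $L^2_{L^2(\Gamma)}(\R_+)$ because $\eta_i^n,\eta\in D(\mathcal{S}_i)$.

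Next I would take $\|\cdot\|^2_{L^2_{L^2(\Gamma)}(\R_+)}$ of both error equations and use the elementary identity $\|(s_0I+\hat S_i)\mu\|^2-\|(s_0I-\hat S_i)\mu\|^2=4s_0\,(\hat S_i\mu,\mu)_{L^2_{L^2(\Gamma)}(\R_+)}$ together with $(\hat S_i\mu,\mu)_{L^2_{L^2(\Gamma)}(\R_+)}=\langle S_i\mu,\mu\rangle$, the latter being~\cref{eq:riesz} applied with $\mu=e_i^n\in D(\mathcal{S}_i)$ (so that $S_ie_i^n\in L^2_{L^2(\Gamma)}(\R_+)^*$). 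Chaining the two resulting norm identities produces the telescoping relation
\begin{equation*}
\|(s_0I-\hat S_2)e_2^n\|^2_{L^2_{L^2(\Gamma)}(\R_+)}=\|(s_0I-\hat S_2)e_2^{n-1}\|^2_{L^2_{L^2(\Gamma)}(\R_+)}-4s_0\bigl(\langle S_1e_1^n,e_1^n\rangle+\langle S_2e_2^n,e_2^n\rangle\bigr).
\end{equation*}

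To conclude, observe that by the coercivity~\cref{eq:weakcoer} both $\langle S_1e_1^n,e_1^n\rangle$ and $\langle S_2e_2^n,e_2^n\rangle$ are nonnegative, so the sequence $b_n=\|(s_0I-\hat S_2)e_2^n\|^2_{L^2_{L^2(\Gamma)}(\R_+)}$ is nonincreasing and bounded below by $0$, hence convergent; summing the telescoping relation over $n$ gives $\sum_{n\geq 1}\langle S_ie_i^n,e_i^n\rangle<\infty$ for $i=1,2$, and in particular $\langle S_ie_i^n,e_i^n\rangle\to 0$. Since $\mathcal{S}_i\eta_i^n-\mathcal{S}_i\eta=\hat S_ie_i^n=\mathcal{R}^{-1}S_ie_i^n$, one final application of~\cref{eq:riesz} gives $(\mathcal{S}_i\eta_i^n-\mathcal{S}_i\eta,\eta_i^n-\eta)_{L^2_{L^2(\Gamma)}(\R_+)}=\langle S_ie_i^n,e_i^n\rangle\to 0$, which is exactly~\cref{eq:prlimit}.

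The point requiring care --- and the reason~\cref{ass:regularity} enters, through~\cref{lemma:friedrich} --- is that the entire computation only makes sense once the errors $e_i^n$ are known to lie in $D(\mathcal{S}_1)\cap D(\mathcal{S}_2)$, so that $\hat S_ie_i^n\in L^2_{L^2(\Gamma)}(\R_+)$ and all the norms above are finite and the consistency relations for $\eta$ hold in $L^2_{L^2(\Gamma)}(\R_+)$ rather than only in $Z^*$; here $\eta\in D(\mathcal{S}_1)\cap D(\mathcal{S}_2)$ is supplied by~\cref{lemma:friedrich}, while the iterates $\eta_i^n$ lie in the respective domains by the well-posedness of~\cref{eq:prl2}. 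Everything else is the routine telescoping bookkeeping indicated above.
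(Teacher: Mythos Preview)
Your argument is correct and is precisely the telescoping energy computation underlying \cite[Proposition~1]{lionsmercier} and \cite[Lemma~8.8]{EHEE22}, which is exactly what the paper invokes in lieu of a written-out proof; so you have supplied the details the paper defers to those references. Two minor wording slips, neither affecting correctness: $\hat S_i=\mathcal{R}^{-1}S_i$ is linear but \emph{unbounded} on $L^2_{L^2(\Gamma)}(\R_+)$ (its domain is precisely those $\mu\in Z$ with $S_i\mu\in L^2_{L^2(\Gamma)}(\R_+)^*$), and you only need --- and only actually verify --- that $S_ie_i^n\in L^2_{L^2(\Gamma)}(\R_+)^*$ with matching index $i$, not that $e_i^n\in D(\mathcal{S}_1)\cap D(\mathcal{S}_2)$ as your final paragraph suggests.
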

\cref{lemma:prlimit} is a consequence of the abstract result~\cite[Proposition 1]{lionsmercier}. The latter requires the monotonicity~\cref{eq:coerl2} and the fact that the solution to~\cref{eq:spl2} satisfies $\eta\in D(\mathcal{S}_1)\cap D(\mathcal{S}_2)$, which follows from~\cref{lemma:friedrich}. A simpler proof of~\cref{lemma:prlimit} can be found in~\cite[Lemma~8.8]{EHEE22}.

We are now in a position to prove that the Robin--Robin method converges.
\begin{theorem}\label{thm:rrconv}
Suppose that~\cref{ass:phi,ass:eqdata,ass:tech,ass:regularity} hold. Let $u$ be the solution to the weak parabolic equation~\cref{eq:weakHonehalf} and $\eta$ be the solution to the $L^2$-Steklov--Poincaré equation~\cref{eq:spl2}. The iterates $(\eta_1^n, \eta_2^n)_{n\geq 1}$ of the $L^2$-Peaceman--Rachford iteration~\cref{eq:prl2} converges to $\eta$, i.e.,
\begin{equation*}\label{eq:prconv}
\|\eta^n_1-\eta\|_Z+\|\eta^n_2-\eta\|_Z\rightarrow 0
\end{equation*}
as $n$ tends to infinity. Moreover the weak Robin--Robin approximation $(u_1^n, u_2^n)_{n\geq 1}$ converges to $(u_1, u_2)=(q_1u, q_2u)$, i.e.,
\begin{equation*}\label{eq:rrconv}
\|u^n_1- u_1\|_{W_1}+\|u^n_2-u_2\|_{W_2}\rightarrow 0
\end{equation*}
as $n$ tends to infinity.
\end{theorem}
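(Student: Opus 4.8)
The plan is to first establish convergence at the interface, namely $\eta_i^n\to\eta$ in $Z$, and then to transfer this to the subdomain approximations through the bounded linear solution operators $F_i$.

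First I would invoke \cref{lemma:friedrich} to ensure that the solution $\eta$ of the $L^2$-Steklov--Poincar\'e equation~\cref{eq:spl2} lies in $D(\mathcal{S}_1)\cap D(\mathcal{S}_2)$. This membership, together with the fact that each iterate $\eta_i^n$ lies in $D(\mathcal{S}_i)$ by construction of~\cref{eq:prl2}, is precisely what is needed so that the monotonicity estimate~\cref{eq:coerl2} may be applied to the pair $(\eta_i^n,\eta)$. Combining~\cref{eq:coerl2} with the limit~\cref{eq:prlimit} from~\cref{lemma:prlimit} then gives
\[
c\,\|\eta_i^n-\eta\|_Z^2\leq(\mathcal{S}_i\eta_i^n-\mathcal{S}_i\eta,\,\eta_i^n-\eta)_{L^2_{L^2(\Gamma)}(\R_+)}\longrightarrow 0
\]
as $n\to\infty$, for $i=1,2$. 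This yields the first claimed convergence $\|\eta_1^n-\eta\|_Z+\|\eta_2^n-\eta\|_Z\to 0$.

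For the second claim I would first note that, by~\cref{lemma:prl2weak}, the $L^2$-iterates $(\eta_1^n,\eta_2^n)$ also solve the weak Peaceman--Rachford iteration~\cref{eq:pr}, so that by~\cref{lemma:rrpr} the weak Robin--Robin approximation is $u_i^n=F_i\eta_i^n+G_if_i$. On the other hand, by~\cref{lemma:tpspeq} together with~\cref{lemma:tranequiv} (cf.\ the identification used in the proof of~\cref{lemma:friedrich}), the exact subdomain solutions satisfy $u_i=q_iu=F_i\eta+G_if_i$. Subtracting and using the linearity and boundedness of $F_i\colon Z\to W_i$ gives
\[
\|u_i^n-u_i\|_{W_i}=\|F_i(\eta_i^n-\eta)\|_{W_i}\leq C\,\|\eta_i^n-\eta\|_Z\longrightarrow 0
\]
for $i=1,2$, which is the second convergence statement.

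The proof is short because the substantive work has already been done: the coercivity of the Steklov--Poincar\'e operators (\cref{lemma:sp}), the $D(\mathcal{S}_i)$-regularity of the exact interface datum under~\cref{ass:regularity} (\cref{lemma:friedrich}), and the abstract convergence of the Peaceman--Rachford splitting for monotone operators (\cref{lemma:prlimit}, ultimately resting on~\cite{lionsmercier}). Accordingly, the only care required is bookkeeping: checking that every object ($\eta_i^n$, $\eta$, $u_i^n$, $u_i$) lies in the space in which the cited estimate is valid, and invoking in the right order the identifications between the $L^2$- and the weak formulations as well as between the interface iteration and the domain-decomposition iteration. I expect no genuine obstacle here; the only mild subtlety is confirming that $\eta_2^0\in D(\mathcal{S}_2)$ propagates so that all iterates remain in the appropriate domains, which is immediate from the isomorphism property of $s_0I+\mathcal{S}_i$ on $D(\mathcal{S}_i)$.
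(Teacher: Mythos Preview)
Your proposal is correct and follows essentially the same route as the paper: combine the monotonicity estimate~\cref{eq:coerl2} with the limit~\cref{eq:prlimit} to obtain $\eta_i^n\to\eta$ in $Z$, then use the identifications $u_i=F_i\eta+G_if_i$ and $u_i^n=F_i\eta_i^n+G_if_i$ together with the boundedness of $F_i$ to conclude. You are in fact slightly more explicit than the paper about the bookkeeping, invoking \cref{lemma:friedrich} and \cref{lemma:prl2weak} to justify the domain membership and the passage from the $L^2$- to the weak iteration, whereas the paper subsumes these into the citation of \cref{lemma:tpspeq,lemma:tranequiv,lemma:rrpr}.
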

\begin{proof}
From \cref{eq:prlimit,eq:coerl2} we obtain
\begin{align*}
            &\|\eta^n_1-\eta\|^2_Z+\|\eta^n_2-\eta\|^2_Z\\
            &\quad\leq C\bigl((\mathcal{S}_1\eta^{n+1}_1-\mathcal{S}_1 \eta, \eta^{n+1}_1 -\eta)_{L^2_{L^2(\Gamma)}(\R_+)}
                +(\mathcal{S}_2\eta^n_2- \mathcal{S}_2 \eta, \eta^n_2 -\eta)_{L^2_{L^2(\Gamma)}(\R_+)}\bigr)\rightarrow 0
\end{align*}
as $n$ tends to infinity. By~\cref{lemma:tpspeq,lemma:tranequiv,lemma:rrpr}, one has the identities
\begin{displaymath}
 (u_1, u_2)=(F_1\eta+G_1f_1, F_2\eta+G_2f_2)\quad\text{and}\quad (u_1^n, u_2^n)=(F_1\eta_1^n+G_1f_1, F_2\eta_2^n+G_2f_2).
\end{displaymath}
This together with the limit above and the fact that $F_i$ is bounded yields that 
\begin{align*}
            \|u^n_1-u_1\|_{W_1}+\|u^n_2-u_2\|_{W_2}&=\|F_1(\eta^n_1-\eta)\|_{W_1}+\|F_2(\eta^n_2-\eta)\|_{W_2}\\
            &\leq C\bigl(\|\eta^n_1-\eta\|_Z+\|\eta^n_2-\eta\|_Z\bigr)\rightarrow 0
\end{align*}
as $n$ tends to infinity.
\end{proof}

\begin{funding}
Engström and Hansen were supported by the Swedish Research Council under the grant 2023--04862. Djurdjevac acknowledges the support from DFG CRC/TRR 388 ``Rough
Analysis, Stochastic Dynamics and Related Fields'', Project B06.
\end{funding}

{\footnotesize
\bibliographystyle{plain}
\bibliography{refRRmovdom}
}

\end{document}